\documentclass[12pt]{amsart}
\usepackage[pdftex,colorlinks,citecolor=blue,urlcolor=blue]{hyperref}
\usepackage{amssymb}
\usepackage{graphicx,color}
\usepackage{amsmath}
\usepackage{comment}
\usepackage{bbm}
\usepackage{amssymb}
\usepackage{cleveref}
\usepackage{enumerate}
\usepackage{mathtools}
\usepackage{caption}
\usepackage{subcaption}
\usepackage{tikz}
\usepackage{graphicx}
\usepackage[margin=0.25in]{geometry}
\usepackage{pgfplots}
\usepackage{pgfplotstable}
\pgfplotsset{width=10cm,compat=1.9} 
\usepackage{subcaption}
\usepgfplotslibrary{external}
\tikzexternalize
\usetikzlibrary{decorations.pathreplacing,calc,plotmarks}
\setlength{\textwidth}{6.5in}
\setlength{\textheight}{8.6in}
\setlength{\topmargin}{0pt}
\setlength{\headsep}{0pt}
\setlength{\headheight}{0pt}
\setlength{\oddsidemargin}{0pt}
\setlength{\evensidemargin}{0pt}
\flushbottom
\pagestyle{plain}

\newtheorem{theorem}{Theorem}[section]

\newtheorem{definition}[theorem]{Definition}

\newtheorem{question}[theorem]{Question}

\newtheorem{remark}[theorem]{Remark}

\newtheorem{observation}[theorem]{Observation}

\DeclareMathOperator{\supp}{\mathrm{supp}}

\newcommand{\N}{\mathbb{N}} 
\newcommand{\Z}{\mathbb{Z}}
\newcommand{\C}{\mathcal{C}}
\newcommand{\R}{\mathbb{R}}

\newcommand{\conv}{\mathop{\rm conv}\nolimits}

\newcommand{\ESP}{\mathrm{ESP}}

\newcommand{\TECSP}{\mathrm{TECSP}}
\renewcommand{\P}{\mathcal{P}}

\newcommand{\M}{\mathcal{M}}

\newcommand{\F}{\mathcal{F}}

\newcommand{\aff}{\mathrm{aff}}
\newcommand{\CP}{\mathrm{CP}}
\newcommand{\Cyc}{\mathrm{Cyc}}

\title{Two-edge-connected (not necessarily spanning) subgraphs and polyhedra}
\author{Justus Bruckamp, Markus Chimani, Martina Juhnke}
\address{Faculty of Mathematics/Computer Science, University of Osnabr\"{uc}k, Germany}
\email{justus.bruckamp@uni-osnabrueck.de}
\email{markus.chimani@uni-osnabrueck.de}
\email{martina.juhnke@uni-osnabrueck.de}


\begin{document}

\begin{abstract}
Given a graph $G$, we study the $2$-edge-connected subgraph polytope $\TECSP(G)$, which is given by the convex hull of the incidence vectors of all $2$-edge-connected subgraphs of $G$. We describe the lattice points of this polytope by linear inequalities which provides an ILP-algorithm for finding a $2$-edge-connected subgraph of maximum weight. Furthermore, we characterize when these inequalities define facets of $\TECSP(G)$. We also consider further types of supporting hyperplanes of $\TECSP(G)$ and study when they are facet-defining. Finally, we investigate the efficiency of our considered inequalities practically on some classes of graphs.
\end{abstract}

\maketitle
\section{Introduction}









In network planning, it is a typical problem to ask for a weight-optimal subgraph $H$ of a given weighted graph $G$, where $H$ should have some required properties. Often, one wants to find a network that can withstand the loss of connections, e.g., $H$ should be $2$-edge- or $2$-vertex-, or even stronger connected. In this article we focus on the $2$-edge-connected case, i.e., we want to identify a weight-optimal $2$-edge-connected subgraph of a given weighted graph~$G$. Let $V(G)$ and $E(G)$ denote the vertices and edges of $G$, respectively. $G$ is \mbox{$2$-edge-connected}, if for every pair of distinct vertices $u,v \in V(G)$, there exist at least two edge-disjoint paths connecting $u$ and~$v$. We consider the \emph{2-edge-connected subgraph polytope} $\TECSP(G)$, which is given by the convex hull of all incidence vectors of $2$-edge-connected subgraphs of $G$, i.e.,
\begin{align*}
    \TECSP(G)\coloneqq \conv\{\chi^H \mid H \subseteq G \text{ is $2$-edge-connected}\}\subseteq \R^{E(G)},
\end{align*}
where $\chi^H \in \{0,1\}^{E(G)}$ is defined by
\begin{align*}
        \chi_e^H\coloneqq \begin{cases} 1, \text{ if } e \in E(H), \\
                               0, \text{ otherwise}.\end{cases}
\end{align*}
We will describe all lattice points of this polytope by linear inequalities, which provides an integer linear program (ILP) for this problem. The polytope is related to other previously studied polytopes from graph optimization problems and we also provide new ILPs for such problems.

In \cite{MahjoubSpanning, BARAHONA199519}, the problem of finding a \emph{spanning} $2$-edge-connected subgraph has been considered by studying the corresponding polytope. In both articles, several supporting inequalities have been studied. Furthermore, the polytope is completely described for series-parallel graphs in \cite{MahjoubSpanning}, and for Halin graphs in \cite{BARAHONA199519}. Given some vertex subset $R \subseteq V(G)$, \cite{Steiner2edge} and \cite{Mahjoub2008} consider the more general polytope of subgraphs $H \subseteq G$ that allow a $2$-edge-connected subgraph $H' \subseteq H$ with $R \subseteq V(H')$. Observe that $H$ itself does not even need to be connected. Again, the polytope is completely described for series-parallel graphs in \cite{Steiner2edge} and for a generalization of Halin graphs in \cite{Mahjoub2008}. Since we will describe all lattice points of $\TECSP(G)$, we can directly describe such $2$-edge-connected subgraphs $H'$ by intersecting $\TECSP(G)$ with the inequalities $\sum_{e \in \delta(\{v\})}x_e \ge 2$ for $v \in R$, where $\delta(\{v\})$ denotes the edges incident to $v$.

In \cite{DidiBihaConnectedSubgraph}, the polytope of connected subgraphs of a given graph $G$ is considered. The authors study supporting hyperplanes and completely describe the polytope if $G$ is a tree or a cycle. Therefore, studying $2$-edge-connected subgraphs and the related polytope is a natural follow-up question.

Another related problem is the \emph{$k$-Cardinality Subgraph Problem}, which asks for a weight-optimal connected subgraph that has exactly $k$ edges. In \cite{EhrgottkCardinalityConnSubgraph}, the related polytope is studied. Supporting hyperplanes of the polytope are studied and two different algorithms for this problem are compared. As for the connected subgraph problem, a natural follow-up question is to study the problem of finding $k$-cardinality $2$-edge-connected subgraphs: this can be seen as the intersection of $\TECSP(G)$ with the equality $\sum_{e \in E(G)}x_e=k$.

In addition, there are also connections to the problem of finding a weight-optimal \emph{Eulerian} subgraph, i.e., a connected subgraph in which each vertex has even degree. Since every Eulerian subgraph is $2$-edge-connected, the corresponding Eulerian subgraph polytope is contained in $\TECSP(G)$. In literature, the Eulerian subgraph polytope often appears as a special case of the cycle polytope of a matroid, if the underlying matroid is graphic, see for example \cite{BARAHONA198640}. But for the cycle polytope of a graphic matroid, all (possibly disconnected!) subgraphs are considered as long as every vertex has even degree. Therefore, this is not the actual Eulerian subgraph polytope, since also disconnected subgraphs are allowed. In \cite{BARAHONA198640}, a complete facet description is provided in the case that the underlying matroid is graphic. By this, we get an ILP for the problem of finding a weight-optimal (connected) Eulerian subgraph by combining the ILP of the corresponding cycle polytope for a graphic matroid with the ILP we provide for finding a weight-optimal $2$-edge-connected subgraph.

Furthermore, the $2$-edge-connected subgraph polytope has been studied in \cite{connectedblocks} in the special case that the underlying graph is a cactus graph, i.e., a graph where any two cycles intersect in at most one vertex. For this case, a complete facet description of the polytope is provided.

We close this section by specifically discussing bridges, i.e., edges whose deletion increase the number of connected components. Clearly, they are never contained in a $2$-edge-connected subgraph of $G$. One can delete all bridges in $G$ without changing the set of $2$-edge-connected subgraphs. Whenever $G$ contains several connected components, one can solve the problem of finding a weight-optimal $2$-edge-connected subgraph on each component independently and choose the overall best one. Therefore, we may safely assume that the underlying graph $G$ is itself $2$-edge-connected most of the time.

\smallskip

\paragraph{\textbf{Organization}} In \Cref{preliminaries}, we give the necessary background and notation used in this paper. In \Cref{facets}, we discuss five different types of feasible inequalities for $\TECSP(G)$, and characterize when they define facets. Furthermore, in \Cref{lattice points}, we describe all lattice points of $\TECSP(G)$ by linear inequalities, which yields an ILP for finding a weight-optimal $2$-edge-connected subgraph. In \Cref{experiments}, we discuss separability for the different considered inequalities and report on experiments investigating the advantages and disadvantages of using additional facet-defining inequalities in practice. \Cref{conclusion} concludes with some open questions.

\section{Preliminaries}

\label{preliminaries}

For $n \in \N$, let $[n] \coloneqq \{1,\ldots,n\}$ and for a set $M$, let $2^M$ denote the power set of $M$, and let $\binom{M}{2}$ denote the set of all subsets of $M$ of cardinality $2$. For vectors $v_1,\ldots,v_k \in \R^n$ let $\mathrm{span}(v_1,\ldots,v_k)$, $\aff(v_1,\ldots,v_k)$, and $\conv(v_1,\ldots,v_k)$ denote their span, their affine hull, and their convex hull, respectively.
Let $G$ be a simple (i.e., no multi-edges) undirected graph with vertex set $V(G)$ and edge set $E(G) \subseteq \binom{V(G)}{2}$. For a subset of the vertices $S \subseteq V(G)$, let $G[S]\coloneqq (S,E(G) \cap \binom{S}{2})$ denote the graph induced by $S$.

Let $F \subseteq E(G)$ be a subset of the edges of $G$. We define the \textit{incidence vector} $\chi^{F} \in \{0,1\}^{E(G)}$ of $F$ by
    \begin{align}
    \label{incidence vector}
        \chi_e^F\coloneqq \begin{cases} 1, \text{ if } e \in F, \\
                               0, \text{ otherwise}.\end{cases}
    \end{align}

For a subgraph $H \subseteq G$, we use the shorthand $\chi^H \coloneqq \chi^{E(H)}$.
For a vector $x \in \R^{E(G)}$ we will use the notation $x(F) \coloneqq \sum_{e \in F}x_e$ for $F \subseteq E(G)$. Furthermore, we denote the \emph{support of $x$} by $\supp(x) \coloneqq \{e \in E(G) \mid x_e \neq 0\}$.
For $F \subseteq E(G)$, let $G-F \coloneqq (V(G),E(G)\setminus F)$ be the graph after removing $F$. For $e \in E$ we may write $G-e$ instead of $G-\{e\}$. Similar, for $S \subseteq V(G)$ we denote by $G-S$ the graph with vertex set $V(G) \setminus S$ and edge set $E(G) \cap \binom{V(G) \setminus S}{2}$. For a single vertex $v \in V(G)$ we will also write $G-v$ instead of $G-\{v\}$.
For $\emptyset \subsetneq S \subsetneq V(G)$, let $\delta(S)$ denote all edges that have exactly one vertex in $S$, i.e.,
\begin{align*}
    \delta(S)\coloneqq \{e \in E(G)\mid \vert e \cap S\vert =1\}.
\end{align*}
The set $\delta(S)$ is also called a \textit{cut}, since it separates the vertices of $S$ from the vertices of $V(G) \setminus S$, and therefore, the graph $G-\delta(S)$ is disconnected. If $\vert\delta(S)\vert=k$, we say, that $\delta(S)$ is a \textit{cut of size $k$} or shorter \textit{$k$-cut}. If $\delta(S)=\{e\}$ for some $\emptyset \subsetneq S \subsetneq V(G)$, we say that $e$ is a \textit{bridge}. It is well-known that if $F_1,F_2 \subseteq E(G)$ are cuts, then also their symmetric difference $F_1 \triangle F_2 \coloneqq (F_1 \setminus F_2) \cup (F_2 \setminus F_1)$ is a cut. We say that a cut $F \subseteq E(G)$ has a \emph{chord} $e \in E(G)$, if there are two cuts $F_1,F_2 \subseteq E(G)$, such that $F_1 \triangle F_2=F$ and $F_1 \cap F_2 = \{e\}$. Furthermore, it is well-known that the intersection of a cut and the edge-set of a cycle has always even cardinality.

\begin{definition}
A graph $G$ is \emph{$2$-edge-connected}, if and only if one of the following equivalent conditions is true:
\begin{enumerate}[(i)]
    \item for every pair of distinct vertices $u,v \in V(G)$, there exist at least two edge-disjoint paths connecting $u$ and $v$;
    \item $G$ is connected and does not contain any bridges.
\end{enumerate}
\end{definition}

We now define the central object of this paper. For a simple undirected graph $G$, the \textit{$2$-edge-connected subgraph polytope} $\TECSP(G)$ is given by
\begin{align*}
\TECSP(G)=\conv\{\chi^H \mid H \subseteq G \text{ is $2$-edge-connected}\}.
\end{align*}
Since the empty graph, and a graph with only one vertex and no edge, is $2$-edge-connected by definition, the all-zeros vector $\textbf{0}$ is always contained in $\TECSP(G)$.



\subsection{Coparallel classes}

In \cite{BARAHONA198640}, the \emph{cycle polytope} $\Cyc(\M)$ of a binary matroid $\M$ has been studied, and in particular, \emph{coparallel elements} of a matroid $\M$ play an important role in this study. For the $2$-edge-connected subgraph polytope $\TECSP(G)$ of a graph $G$, this will also be the case, where we talk about coparallel elements in the corresponding graphic matroid. Since we only need this special case in this article, we will only define it for graphs.

Two distinct edges $e,f \in E(G)$ are \emph{coparallel} if and only if they form a minimal $2$-cut of~$G$. A \textit{coparallel class} of $G$ is an inclusion-wise maximal subset $C \subseteq E(G)$, such that every pair of distinct elements of $C$ is coparallel. Note that if an element $e \in E(G)$ is neither a bridge, nor contained in a $2$-cut, then $\{e\}$ forms a coparallel class by itself. If $e \in E(G)$ is a bridge, we do not consider $\{e\}$ to be a coparallel class. Since coparallelism defines a transitive relation, the \emph{set of coparallel classes of $G$}, denoted by $\CP(G)$, is a partition of the edge set $E(G)$, as long as $G$ does not contain any bridges, which is especially true if $G$ is $2$-edge-connected.

We want to point out some easy but helpful properties of coparallel classes. Let $C=\{e_1,\ldots,e_k\} \subseteq E(G)$ be a coparallel class of a $2$-edge-connected graph $G$ of size $k$, and see \Cref{structure of cpc} for visualizations. If $k=1$, $G-e_1$ is a $2$-edge-connected graph, since otherwise $e_1$ would be a bridge or contained in a $2$-cut. For $k=2$, we have that $G-\{e_1,e_2\}$ has two connected components $G_1$ and $G_2$, since $\{e_1,e_2\}$ is a minimal $2$-cut. Again, $G_1$ and $G_2$ are $2$-edge-connected, since otherwise $C$ would contain at least three elements. If $k=3$, again $G-\{e_1,e_2\}$ has two connected components. Now, since also $\{e_1,e_3\}$ and $\{e_2,e_3\}$ are $2$-cuts, $e_3$ is a bridge in one of the connected components of $G-\{e_1,e_2\}$. Therefore, $G$ has the structure seen in \Cref{k=3}, where, again, $G_1$, $G_2$ and $G_3$ are $2$-edge-connected for the same reason as before. Continuing this procedure, we see that for $k \ge 4$, the graph $G$ has the structure seen in \Cref{k=4}, i.e., $G-C$ has $k$ connected components $G_1,\ldots,G_k$, and if one contracts these connected components to single vertices, we obtain a cycle where the edges are exactly the edges of the coparallel class $C$. Furthermore, we have:

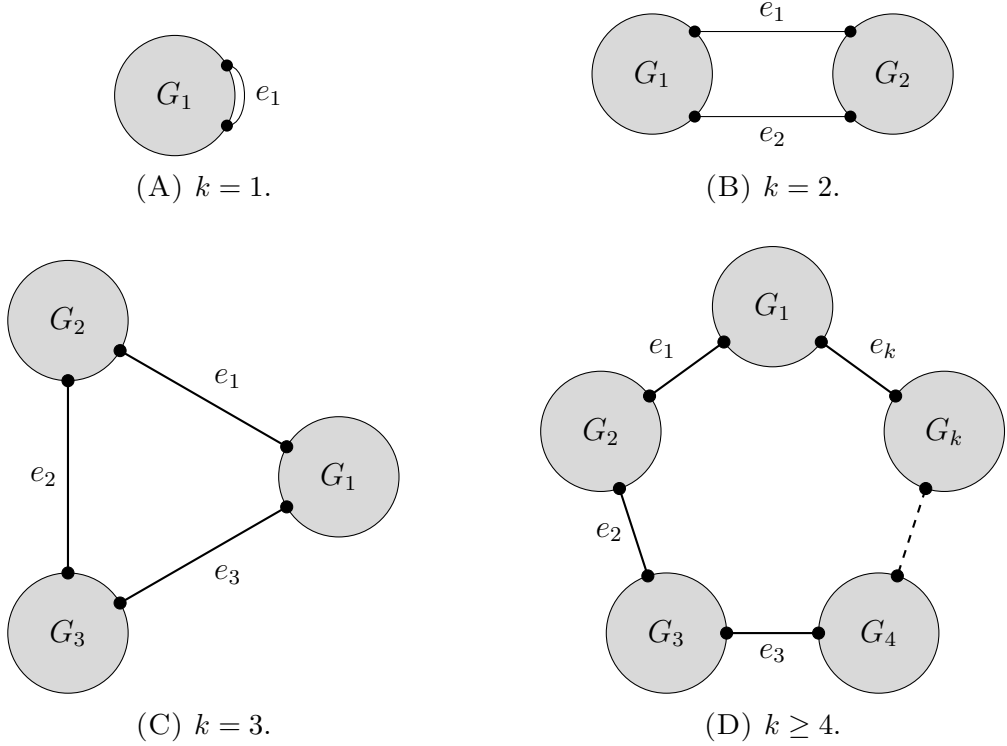
\begin{figure}
    \centering
    \begin{subfigure}[b]{0.45 \textwidth}
    \centering
        \begin{tikzpicture}[scale=1.6]
        \coordinate (A) at (0,0);
        \coordinate (A1) at ($(A)+(30:0.5)$);
        \coordinate (A2) at ($(A)+(330:0.5)$);
        
        \draw[fill=gray!30] (A) circle (0.5) node{$G_1$};
        \draw(A1) to [bend left=90] node[midway, right] {$e_1$} (A2);
        \fill (A1) circle (0.05);
        \fill (A2) circle (0.05);
        \end{tikzpicture}
        \caption{$k=1$.}
    \end{subfigure}
    \begin{subfigure}[b]{0.45 \textwidth}
    \centering
        \begin{tikzpicture}[scale=1.6]
        \coordinate (A) at (-1,0);
        \coordinate (B) at (1,0);

        \coordinate (A1) at ($(A)+(45:0.5)$);
        \coordinate (A2) at ($(A)+(360-45:0.5)$);
        \coordinate (B1) at ($(B)+(135:0.5)$);
        \coordinate (B2) at ($(B)+(180+45:0.5)$);
        
        \draw[fill=gray!30] (A) circle (0.5) node{$G_1$};
        \draw[fill=gray!30] (B) circle (0.5) node{$G_2$};

        \draw (A1) to node[midway, above] {$e_1$} (B1);
        \draw (A2) to node[midway, below] {$e_2$} (B2);
        \fill (A1) circle (0.05);
        \fill (A2) circle (0.05);
        \fill (B1) circle (0.05);
        \fill (B2) circle (0.05);
        \end{tikzpicture}
        \caption{$k=2$.}
    \end{subfigure}

    \hspace{1cm}

    \begin{subfigure}[b]{0.45 \textwidth}
    \centering
        \begin{tikzpicture}[scale=1.6]
        \coordinate (A) at (0:1.5);
        \coordinate (B) at (120:1.5);
        \coordinate (C) at (240:1.5);
        
        \draw[thick] (A) -- (B) -- (C) -- (A);

        \node[above right] at ($0.5*(A)+0.5*(B)$) {$e_1$};
        \node[left] at ($0.5*(B)+0.5*(C)$) {$e_2$};
        \node[below right] at ($0.5*(C)+0.5*(A)$) {$e_3$};

        \draw[fill=gray!30] (A) circle (0.5) node{$G_1$};
        \draw[fill=gray!30] (B) circle (0.5) node{$G_2$};
        \draw[fill=gray!30] (C) circle (0.5) node{$G_3$};
        
        \draw[fill=black] ($(A)+(150:0.5)$) circle (0.05);
        \draw[fill=black] ($(B)+(330:0.5)$) circle (0.05);
        \draw[fill=black] ($(B)+(270:0.5)$) circle (0.05);
        \draw[fill=black] ($(C)+(90:0.5)$) circle (0.05);
        \draw[fill=black] ($(C)+(30:0.5)$) circle (0.05);
        \draw[fill=black] ($(A)+(210:0.5)$) circle (0.05);
        
        \end{tikzpicture}
        \caption{$k=3$.}
        \label{k=3}
        \end{subfigure}
        \begin{subfigure}[b]{0.45 \textwidth}
        \centering
        \begin{tikzpicture}[scale=1.6]
        \coordinate (A) at (90:1.5);
        \coordinate (B) at (162:1.5);
        \coordinate (C) at (234:1.5);
        \coordinate (D) at (306:1.5);
        \coordinate (E) at (18:1.5);
        
        \draw[thick] (E) -- (A) -- (B) -- (C) -- (D);
        \draw[thick,dashed] (D) -- (E);

        \node[above left] at ($0.5*(A)+0.5*(B)$) {$e_1$};
        \node[left] at ($0.5*(B)+0.5*(C)$) {$e_2$};
        \node[below] at ($0.5*(C)+0.5*(D)$) {$e_3$};
        \node[above right] at ($0.5*(E)+0.5*(A)$) {$e_k$};

        \draw[fill=gray!30] (A) circle (0.5) node{$G_1$};
        \draw[fill=gray!30] (B) circle (0.5) node{$G_2$};
        \draw[fill=gray!30] (C) circle (0.5) node{$G_3$};
        \draw[fill=gray!30] (D) circle (0.5) node{$G_4$};
        \draw[fill=gray!30] (E) circle (0.5) node{$G_k$};
        
        \draw[fill=black] ($(A)+(216:0.5)$) circle (0.05);
        \draw[fill=black] ($(B)+(36:0.5)$) circle (0.05);
        \draw[fill=black] ($(B)+(360-72:0.5)$) circle (0.05);
        \draw[fill=black] ($(C)+(90+18:0.5)$) circle (0.05);
        \draw[fill=black] ($(C)+(0.5,0)$) circle (0.05);
        \draw[fill=black] ($(D)+(-0.5,0)$) circle (0.05);
        \draw[fill=black] ($(D)+(90-18:0.5)$) circle (0.05);
        \draw[fill=black] ($(E)+(270-18:0.5)$) circle (0.05);
        \draw[fill=black] ($(E)+(90+54:0.5)$) circle (0.05);
        \draw[fill=black] ($(A)+(360-36:0.5)$) circle (0.05);
        
        \end{tikzpicture}
        \caption{$k \ge 4$.}
        \label{k=4}
        \end{subfigure}
    \caption{The structure of a coparallel class $C=\{e_1,\ldots,e_k\}$ of a graph $G$. Every grey filled cycle represents a $2$-edge-connected subgraph of $G$.}
    \label{structure of cpc}
\end{figure}

\begin{observation}
\label{G-C}
Let $G$ be $2$-edge-connected and $C \in \CP(G)$ be a coparallel class. Then the connected components of $G-C$ are $2$-edge-connected.
\end{observation}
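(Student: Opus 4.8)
The plan is to argue by contradiction. Suppose some connected component $G_i$ of $G-C$ is \emph{not} $2$-edge-connected. Since $G_i$ is connected and a one-vertex graph is $2$-edge-connected by convention, $G_i$ must contain a bridge $b$, so that $G_i-b$ splits into two nonempty connected components, say with vertex sets $A$ and $B$. The aim is to produce an edge of $C$ that is coparallel with $b$: since $b\in E(G_i)$ is not an element of $C$ and coparallelism is transitive, this will make $C\cup\{b\}$ a strictly larger set of pairwise coparallel edges, contradicting the maximality of the coparallel class $C$, and the observation follows.

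The first step is to record the structure already discussed before the statement. If $k:=|C|=1$ the claim is immediate (and is handled in that discussion), so assume $k\ge 2$; then $G-C$ has exactly $k$ components, and contracting them to points yields a cycle whose edges are precisely the elements of $C$. In particular the component $G_i$ is incident to exactly two edges $f,g\in C$, each having exactly one endpoint in $V(G_i)$, and no edge of $C$ has both endpoints in $V(G_i)$.

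The second step is to identify the cuts of $G$ determined by $A$ and by $B$. I claim that $\delta_G(A)=\{b\}\cup C_A$ and $\delta_G(B)=\{b\}\cup C_B$, where $C_A$ (resp.\ $C_B$) denotes the set of edges of $C$ having exactly one endpoint in $A$ (resp.\ in $B$): a non-$C$-edge of $G$ with exactly one endpoint in $A\cup B\subseteq V(G_i)$ lies in $G-C$, hence inside $G_i$, hence joins $A$ to $B$ and must be $b$; and an edge of $C$ meeting $A\cup B$ is one of $f,g$. Since each of $f,g$ has exactly one endpoint in $V(G_i)=A\sqcup B$, and its other endpoint lies in a different component, $C_A$ and $C_B$ partition $\{f,g\}$. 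Because $G$ is $2$-edge-connected it has no bridge, so $|\delta_G(A)|,|\delta_G(B)|\ge 2$; as $b\notin C$, this forces $C_A\ne\emptyset\ne C_B$, and since they partition a two-element set, each is a singleton, say $C_B=\{g\}$. Hence $\delta_G(B)=\{b,g\}$ is a $2$-cut of $G$, and it is minimal since $G$ is bridgeless. Thus $b$ and $g$ are coparallel, completing the contradiction described above.

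The only genuinely delicate point is the cut computation $\delta_G(B)=\{b\}\cup C_B$ together with the conclusion that $C_B$ is a singleton; this is exactly where the structural description of a coparallel class (each component of $G-C$ meeting precisely two edges of $C$, none with both endpoints inside it) is needed, to rule out any stray edge of $G$ — be it a further edge of $G_i$ or an edge of $C$ attached to another component — lying in $\delta_G(B)$. Everything else is a short assembly of standard facts: a bridgeless graph has no $1$-cut, every $2$-cut of a bridgeless graph is (inclusion-)minimal, and coparallelism is a transitive relation whose classes are maximal.
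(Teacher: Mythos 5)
Your proof is correct and takes essentially the same route as the paper: there, too, the observation is read off from the cyclic structure of a coparallel class, with the components' $2$-edge-connectivity justified by the remark that a bridge in a component would be coparallel with an incident edge of $C$ and hence enlarge the class, contradicting maximality. Your explicit cut computation $\delta_G(B)=\{b,g\}$ merely makes that informal step precise, and your reliance on the cycle structure is not circular, since that structure is obtained without assuming the components are $2$-edge-connected.
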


Note that this observation also includes the case that a connected component of $G-C$ consists of a single vertex and no edge.

Next, we relate coparallel classes to $2$-edge-connected subgraphs. Let $H \subseteq G$ be a $2$-edge-connected subgraph of a graph $G$ and let $C \in \CP(G)$ be a coparallel class of $G$. Assume, that $e_1,e_2 \in C$ are distinct edges of $C$, such that $e_1 \in E(H)$, but $e_2 \notin E(H)$. Since $\{e_1,e_2\}$ is a $2$-cut of $G$, that means that $e_1$ is now a bridge in $H$, which is a contradiction. The latter already yields the following.

\begin{observation}
\label{completelycontained}
Let $H \subseteq G$ be a $2$-edge-connected subgraph of a graph $G$. Then for every coparallel class $C \in \CP(G)$ of $G$ we either have $C \subseteq E(H)$ or $C \cap E(H) = \emptyset$. In particular, for a coparallel class $C \in \CP(G)$ of a $2$-edge-connected graph $G$, every other coparallel class $C' \in \CP(G)\setminus \{C\}$ is completely contained in a single connected component of $G-C$.
\end{observation}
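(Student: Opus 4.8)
The statement has two halves; the plan is to prove the first directly and then read off the second from the first by combining it with \Cref{G-C}.

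For the first half I would argue by contradiction, fleshing out the sketch given in the paragraph before the statement. Suppose some $C\in\CP(G)$ has both $C\cap E(H)\neq\emptyset$ and $C\not\subseteq E(H)$, and pick $e_1\in C\cap E(H)$ and $e_2\in C\setminus E(H)$. Since $e_1,e_2$ are distinct elements of a coparallel class, they form a minimal $2$-cut, so $\{e_1,e_2\}=\delta(S)$ for some $\emptyset\subsetneq S\subsetneq V(G)$; say $e_1=\{u,v\}$ with $u\in S$ and $v\notin S$. As $e_2\notin E(H)$, the edge $e_1$ is the only edge of $H$ meeting $\delta(S)$, so any walk in $H$ from $u$ to $v$ must use $e_1$; such a walk exists because $H$ is connected, and therefore $e_1$ is a bridge of $H$ --- contradicting $2$-edge-connectedness. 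Hence $C\subseteq E(H)$ or $C\cap E(H)=\emptyset$.

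For the ``in particular'' half, let $G$ be $2$-edge-connected, $C\in\CP(G)$, and $C'\in\CP(G)\setminus\{C\}$. Since $G$ is bridgeless, $\CP(G)$ partitions $E(G)$, so $C'\cap C=\emptyset$, which means $C'\subseteq E(G)\setminus C=E(G-C)$, the disjoint union of the edge sets of the connected components $G_1,\dots,G_k$ of $G-C$. By \Cref{G-C} each $G_i$ is $2$-edge-connected, hence a $2$-edge-connected subgraph of $G$, so the first half applied to $H=G_i$ and the coparallel class $C'$ of $G$ yields $C'\subseteq E(G_i)$ or $C'\cap E(G_i)=\emptyset$ for every $i$. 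Since $C'$ is nonempty and contained in $\bigcup_i E(G_i)$ with the $E(G_i)$ pairwise disjoint, exactly one $i$ has $C'\cap E(G_i)\neq\emptyset$, and then $C'\subseteq E(G_i)$; so $C'$ lies in a single connected component of $G-C$.

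The argument is short, and the one place that rewards care is the bridge claim in the first half: one must use that $H$ is connected (so the separation of $u$ from $v$ in $H-e_1$ genuinely exhibits $e_1$ as a bridge, not as an edge between two already-disconnected pieces) and that $e_1\in E(H)$ forces both $S$ and $V(G)\setminus S$ to contain vertices of $H$. The degenerate cases are harmless: a component $G_i$ consisting of a single vertex has $C'\cap E(G_i)=\emptyset$ automatically, and if $C=E(G)$ then there is no other coparallel class and the second half is vacuous.
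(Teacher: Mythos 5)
Your argument is correct and is essentially the paper's own: the first half is exactly the paper's observation (stated in the paragraph preceding it) that an edge $e_1\in C\cap E(H)$ with a coparallel partner $e_2\notin E(H)$ becomes a bridge of $H$, which you merely flesh out via $\delta(S)$; the second half is the intended deduction from the first half together with \Cref{G-C}. No gaps.
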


By \Cref{completelycontained}, given a $2$-edge-connected subgraph $H \subseteq G$ of a graph $G$, we can talk about the coparallel classes of $G$ contained in $H$. We will denote this set by $\CP_H(G)$ and formally this set is given by
\begin{align*}
	\CP_H(G)\coloneqq \CP(G) \cap 2^{E(H)}.
\end{align*}
Note that in general the equality $\CP_H(G)=\CP(H)$ does \emph{not} hold. Given two edges that are coparallel in $G$ and contained in $H$, these edges are also coparallel in $H$. Furthermore, since all coparallel classes of $G$ are either completely contained in $H$ or not at all, this implies that the coparallel classes of $H$ are unions of coparallel classes of $G$. In particular, we have $\vert \CP(H) \vert \le \vert \CP_H(G) \vert$. We will especially discuss this situation in the case that $H$ is a connected component of $G-C$ for a coparallel class $C \in \CP(G)$, cf. \Cref{cpc of G-C fig}. Suppose there is a connected component $G_1$ of $G-C$ containing edges. By \Cref{G-C}, we know that $G_1$ is $2$-edge-connected. Let $C'=\{f_1,\ldots,f_m\} \in \CP(G_1)$ be a coparallel class of $G_1$. Then again we get the structure seen in \Cref{structure of cpc}, i.e., $G_1-C'$ consists of $2$-edge-connected components $G'_1,\ldots,G'_m$, where one obtains a cycle (for $m \ge 3$, two parallel edges for $m=2$, a loop for $m=1$, resp.) if one contracts $G'_1,\ldots,G'_m$ to single vertices. Assume w.l.o.g. that $e_1$ and $e_k$ are the two edges of $C$ touching $G_1$ and that $e_1$ is connected to $G'_1$. The edge $e_k$ can be connected to any component $G'_\ell$, $1\le \ell \le m$. In $G$, the set $C'$ is only a coparallel class if $\ell=1$. In all other cases, $C'$ is not a coparallel class in $G$ but the sets $\{f_1,\ldots,f_{\ell-1}\}$ and $\{f_\ell,\ldots,f_m\}$ are coparallel classes in $G$ (sticking to the notation of \Cref{cpc of G-C fig}). This already shows the following

\begin{observation}
\label{cpc of G-C obs}
Let $G$ be $2$-edge-connected, $C \in \CP(G)$ be a coparallel class of $G$, and $G'$ be a connected component of $G-C$ that contains edges. If a coparallel class $C'$ of $G'$ is not a coparallel class of $G$, then $C'=C_1 \dot{\cup} C_2$, where $C_1$ and $C_2$ are coparallel classes of $G$.
\end{observation}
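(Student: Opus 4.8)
The plan is to play off the two cyclic structures attached to $G$: the one coming from the coparallel class $C$ of $G$, and the one coming from $C'$ in $G_1 := G'$. By \Cref{G-C}, $G_1$ is $2$-edge-connected, so by the discussion around \Cref{structure of cpc}, writing $C' = \{f_1, \dots, f_m\}$, the graph $G_1 - C'$ decomposes into $2$-edge-connected components $G'_1, \dots, G'_m$ which, contracted to points, form a cycle with $f_i$ joining $G'_i$ and $G'_{i+1}$ (indices cyclically). On the other side, since $G_1$ is a connected component of $G - C$, the only edges of $G$ leaving $V(G_1)$ lie in $C$; by the same structure discussion, when $|C| \ge 2$ exactly two of them meet $G_1$, with endpoints $x, y \in V(G_1)$ (possibly equal), while if $|C| = 1$ then $C = \{e_1\}$ with $e_1 = xy$. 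I will call $x, y$ the \emph{contact vertices} and write $x \in V(G'_p)$, $y \in V(G'_q)$.

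I would first isolate two facts. (A) For every $S \subseteq V(G_1)$ with $x \notin S$ and $y \notin S$ one has $\delta_G(S) = \delta_{G_1}(S)$, because an edge of $G$ with an endpoint in $S$ is either an edge of $G_1$ or an edge of $C$, and the $C$-edges incident to $V(G_1)$ touch it only at $x$ or $y$. (B) If $f, f' \in E(G_1)$ form a $2$-cut $\delta_{G_1}(S)$ of $G_1$ with $x$ and $y$ on opposite sides of $S$, then $\{f, f'\}$ is not a cut of $G$ at all (so in particular $f$ and $f'$ are not coparallel in $G$). Fact (B) is the crux, and I expect it to be the main obstacle. Sketch: suppose $\delta_G(T) = \{f, f'\}$; since $f, f' \in E(G_1)$ and are not bridges of $G_1$, restricting $T$ to $V(G_1)$ yields $\delta_{G_1}(T \cap V(G_1)) = \{f, f'\}$, so, as $G_1$ is connected, $T \cap V(G_1)$ equals $S$ up to complementation, and after possibly replacing $T$ by its complement we may assume $x \in T$, $y \notin T$. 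If $|C| = 1$ then $e_1 = xy \in \delta_G(T)$, contradicting $\delta_G(T) = \{f, f'\} \subseteq E(G_1)$. If $|C| \ge 2$, follow the two $C$-edges at $x, y$ to their other endpoints $x', y'$, which lie in components $G^{(2)}, G^{(k)}$ of $G - C$; since these $C$-edges are not in $\delta_G(T) \subseteq E(G_1)$ we get $x' \in T$ and $y' \notin T$, hence (each component of $G - C$ other than $G_1$ lies entirely on one side of $T$, as it meets $\delta_G(T)$ in nothing) $V(G^{(2)}) \subseteq T$ and $V(G^{(k)}) \cap T = \emptyset$; walking the remaining edges of $C$ around the cyclic chain $G_1 = G^{(1)}, G^{(2)}, \dots, G^{(k)}$ and using that none of them lies in $\delta_G(T)$ propagates $V(G^{(i)}) \subseteq T$ to all $i \ge 2$, contradicting $V(G^{(k)}) \cap T = \emptyset$.

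With (A) and (B) I would finish by distinguishing whether $p = q$. If $p = q$: for any $f_i, f_j \in C'$ the $G_1$-cut $\{f_i, f_j\}$ is realized by one of the two arcs of the $C'$-cycle, and choosing the arc not containing $G'_p$ makes it avoid $x$ and $y$, so by (A) it is a $2$-cut of $G$ and $f_i, f_j$ are coparallel in $G$; moreover any $g$ coparallel to some $f_i$ in $G$ must, by the same ``restrict to $V(G_1)$'' argument as in (B), already be coparallel to $f_i$ in $G_1$, hence lie in $C'$. So $C'$ is itself a coparallel class of $G$, and this case cannot occur under the hypothesis of the observation. If $p \ne q$: relabel so that $x \in V(G'_1)$ and $y \in V(G'_\ell)$ with $2 \le \ell \le m$, and set $C_1 := \{f_1, \dots, f_{\ell - 1}\}$ and $C_2 := \{f_\ell, \dots, f_m\}$ — the two nonempty arcs of the $C'$-cycle between $G'_1$ and $G'_\ell$ — so that $C' = C_1 \,\dot\cup\, C_2$. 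For a pair of edges inside the same $C_t$, the separating $G_1$-cut can be chosen to avoid $x$ and $y$, so by (A) the two edges are coparallel in $G$; for $f_i \in C_1$ and $f_j \in C_2$, the separating $G_1$-cut has $x$ and $y$ on opposite sides, so by (B) they are not coparallel in $G$. This shows simultaneously that $C'$ is not a coparallel class of $G$, that each $C_t$ consists of pairwise coparallel edges of $G$, and — since any $g$ coparallel in $G$ to an element of $C_t$ lies in $C' = C_1 \dot\cup C_2$ and the cross pairs are excluded by (B) — that each $C_t$ is inclusion-maximal, i.e. a coparallel class of $G$. Hence $C' = C_1 \,\dot\cup\, C_2$ with $C_1, C_2 \in \CP(G)$, as claimed.
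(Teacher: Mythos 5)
Your proof is correct and follows essentially the same route as the paper: the paper derives the observation from the structural discussion preceding it (the cyclic decomposition of $G'-C'$ into $G'_1,\ldots,G'_m$, the two edges of $C$ attaching to $G'_1$ and $G'_\ell$, and the split into the arcs $\{f_1,\ldots,f_{\ell-1}\}$ and $\{f_\ell,\ldots,f_m\}$ when $\ell\neq 1$). Your facts (A) and (B) merely make explicit the cut-restriction arguments that the paper leaves implicit, so this is a faithful, more detailed version of the same proof.
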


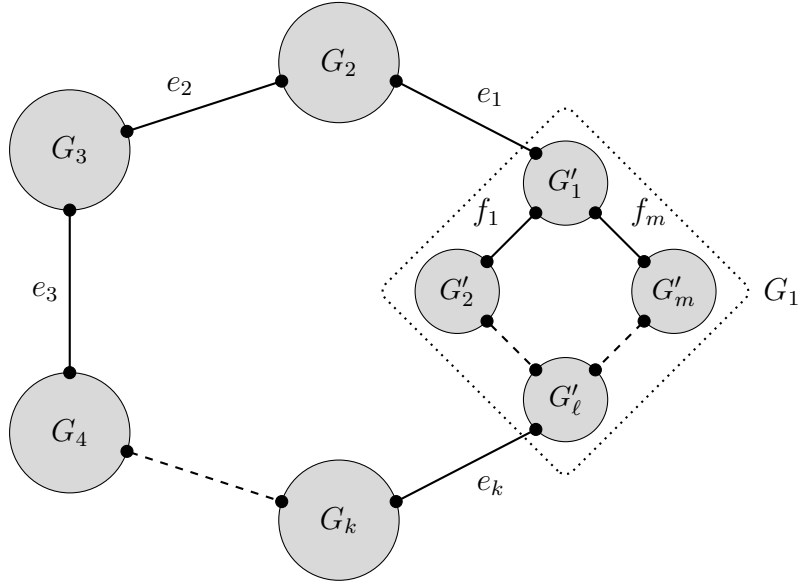
\begin{figure}
    \centering
        \begin{tikzpicture}[scale=1.6]
        \coordinate (A) at (0:2);
        \coordinate (B) at (72:2);
        \coordinate (C) at (144:2);
        \coordinate (D) at (216:2);
        \coordinate (E) at (288:2);
        
        \coordinate (S) at (2.5,0);
        \coordinate (1) at ($(S)+(90:0.9)$);
        \coordinate (2) at ($(S)+(180:0.9)$);
        \coordinate (3) at ($(S)+(270:0.9)$);
        \coordinate (4) at ($(S)+(0:0.9)$);
        
        \coordinate (V1) at ($(1)+(135:0.35)$);
        \coordinate (V2) at ($(3)+(225:0.35)$);

        \coordinate(G) at (-1.4265847745/2,1.9635254916/2);
        \coordinate(H) at (0,-1.2135254916);
        \coordinate(I) at (1.4265847745/2,1.9635254916/2);
        
        \draw[thick] (B) -- (C) -- (D);
        \draw[thick,dashed] (D) -- (E);
        
        \draw[thick] (V1) -- ($(B)+(-18:0.5)$);
        \draw[thick] (V2) -- ($(E)+(18:0.5)$);
        
        \node[above left] at ($0.5*(B)+0.5*(C)$) {$e_2$};
        \node[left] at ($0.5*(C)+0.5*(D)$) {$e_3$};

        \draw[fill=gray!30] (B) circle (0.5) node{$G_2$};
        \draw[fill=gray!30] (C) circle (0.5) node{$G_3$};
        \draw[fill=gray!30] (D) circle (0.5) node{$G_4$};
        \draw[fill=gray!30] (E) circle (0.5) node{$G_k$};
        
        \draw[fill=black] ($(B)+(180+18:0.5)$) circle (0.05);
        \draw[fill=black] ($(C)+(18:0.5)$) circle (0.05);
        \draw[fill=black] ($(C)+(270:0.5)$) circle (0.05);
        \draw[fill=black] ($(D)+(90:0.5)$) circle (0.05);
        \draw[fill=black] ($(D)+(-18:0.5)$) circle (0.05);
        \draw[fill=black] ($(E)+(180-18:0.5)$) circle (0.05);
        
        \draw[fill=black] ($(B)+(-18:0.5)$) circle (0.05);
        \draw[fill=black] ($(E)+(18:0.5)$) circle (0.05);

        \draw[thick] (4) -- (1) -- (2);
        \draw[thick,dashed] (2) -- (3) -- (4);
        
        \node[above left] at ($0.5*(1)+0.5*(2)$) {$f_1$};
        \node[above right] at ($0.5*(1)+0.5*(4)$) {$f_m$};
        
        \draw[fill=gray!30] (1) circle (0.35) node{\small$G'_1$};
        \draw[fill=gray!30] (2) circle (0.35) node{\small$G'_2$};
        \draw[fill=gray!30] (3) circle (0.35) node{\small$G'_\ell$};
        \draw[fill=gray!30] (4) circle (0.35) node{\small$G'_m$};
        
        \draw[fill=black] (V1) circle (0.05);
        \draw[fill=black] (V2) circle (0.05);
        
        \draw[fill=black] ($(1)+(180+45:0.35)$) circle (0.05);
        \draw[fill=black] ($(1)+(-45:0.35)$) circle (0.05);
        \draw[fill=black] ($(2)+(45:0.35)$) circle (0.05);
        \draw[fill=black] ($(2)+(-45:0.35)$) circle (0.05);
        \draw[fill=black] ($(3)+(45:0.35)$) circle (0.05);
        \draw[fill=black] ($(3)+(180-45:0.35)$) circle (0.05);
        \draw[fill=black] ($(4)+(180-45:0.35)$) circle (0.05);
        \draw[fill=black] ($(4)+(180+45:0.35)$) circle (0.05);
        
        \node[above right] at ($0.5*(V1)+0.5*(B)+0.5*(-18:0.5)$) {$e_1$};
        \node[below right] at ($0.5*(V2)+0.5*(E)+0.5*(18:0.5)$) {$e_k$};

        \draw[dotted, thick, rounded corners, rotate around={45:(2.5,0)}] (1.4,-1.1) rectangle (3.6,1.1);

        \node at (4.3,0) {$G_1$};
        
        \end{tikzpicture}
    \caption{The structure of the coparallel classes of $G-C$ for $C=\{e_1,\ldots,e_k\} \in \CP(G)$. The coparallel class $C'=\{f_1,\ldots,f_\ell,\ldots,f_m\}$ is completely contained in $G_1$}
    \label{cpc of G-C fig}
\end{figure}

The following is a direct consequence of \Cref{completelycontained}.

\begin{observation}
\label{observation}
Let $G$ be a graph and $C \in \CP(G)$ be a coparallel class of~$G$. For all $x \in \TECSP(G)$, and all $e,f \in C$ we have $x_e=x_f$ .
\end{observation}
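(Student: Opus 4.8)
The plan is to reduce the claim to the vertices of $\TECSP(G)$ and then invoke convexity. Since $\TECSP(G)=\conv\{\chi^H \mid H \subseteq G \text{ is $2$-edge-connected}\}$ by definition, any $x \in \TECSP(G)$ can be written as a finite convex combination $x = \sum_{i} \lambda_i \chi^{H_i}$ with $2$-edge-connected subgraphs $H_i \subseteq G$, coefficients $\lambda_i \ge 0$, and $\sum_i \lambda_i = 1$. The linear functional $x \mapsto x_e - x_f$ vanishes on $x$ as soon as it vanishes on each $\chi^{H_i}$, so it suffices to check the statement on incidence vectors of $2$-edge-connected subgraphs.

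First I would fix a $2$-edge-connected subgraph $H \subseteq G$ and apply \Cref{completelycontained}: for the coparallel class $C \in \CP(G)$, either $C \subseteq E(H)$ or $C \cap E(H) = \emptyset$. In the first case $\chi^H_e = \chi^H_f = 1$ for all $e,f \in C$, and in the second case $\chi^H_e = \chi^H_f = 0$; in either case $\chi^H_e = \chi^H_f$. Then I would conclude for general $x$ by writing $x_e = \sum_i \lambda_i \chi^{H_i}_e = \sum_i \lambda_i \chi^{H_i}_f = x_f$. There is essentially no obstacle here: the only content is \Cref{completelycontained}, and the passage from vertices to the whole polytope is the standard fact that an equality satisfied by all generators of a convex hull is satisfied throughout it. One could even phrase the conclusion as saying that the hyperplanes $\{x : x_e = x_f\}$ for $e,f \in C$ are among the defining equalities of the affine hull of $\TECSP(G)$.
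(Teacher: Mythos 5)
Your proof is correct and matches the paper's argument: the paper states this observation as a direct consequence of \Cref{completelycontained}, and your reduction to incidence vectors of $2$-edge-connected subgraphs followed by convexity is exactly that reasoning made explicit.
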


\Cref{observation} allows us to determine the dimension of $\TECSP(G)$.

\begin{theorem}
\label{dimension}
For a graph $G$, we have
\begin{align*}
	\dim(\mathrm{TECSP}(G))=\vert \CP(G) \vert .
\end{align*}
\end{theorem}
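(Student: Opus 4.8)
The plan is to squeeze $\dim(\TECSP(G))$ between matching bounds. The inequality $\dim(\TECSP(G)) \le |\CP(G)|$ is immediate from \Cref{observation}: for each coparallel class $C$, picking $|C|-1$ of the equations $x_e = x_f$ ($e,f \in C$) gives, over all classes, $|E(G)| - |\CP(G)|$ linearly independent equations valid on $\TECSP(G)$. Since $\mathbf{0} \in \TECSP(G)$, the reverse inequality amounts to exhibiting $|\CP(G)|$ linearly independent incidence vectors $\chi^H$ of $2$-edge-connected subgraphs $H \subseteq G$, and this is where the real work lies. I would prove it by induction on $|E(G)|$, assuming $G$ to be $2$-edge-connected as in the introduction (a property inherited by the smaller graphs below, by \Cref{G-C}).

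The easy case is that $G$ has no $2$-cut, so $\CP(G)$ consists of $|E(G)|$ singletons; then $G-f$ is connected and bridgeless for every edge $f$ (a bridge of $G-f$ together with $f$ would be a $2$-cut of $G$), hence $2$-edge-connected, and $\{\mathbf{0}\}\cup\{\chi^{G-f}\mid f\in E(G)\}$ is readily checked to be affinely independent. Otherwise I would fix a coparallel class $C$ with $k:=|C|\ge 2$ and use the structure recalled before \Cref{G-C}: $G-C$ has exactly $k$ connected components $G_1,\dots,G_k$, each $2$-edge-connected with strictly fewer edges than $G$, with the edges of $C$ joining the contracted $G_i$ into a cycle, and with $G_i$ meeting $C$ in two edges attached at vertices $p_i,q_i\in V(G_i)$. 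By induction each $G_i$ contributes $|\CP(G_i)|$ linearly independent vectors coming from $2$-edge-connected subgraphs inside $G_i$; as $i$ varies these have pairwise disjoint supports and all vanish on $C$, while $\chi^G$ supplies one further vector that is nonzero on $C$. The remaining vectors must witness the splitting of \Cref{cpc of G-C obs}: the coparallel classes $C^*$ of $G_i$ separating $p_i$ from $q_i$ are not coparallel classes of $G$ but split as $C^*=C_1^*\,\dot{\cup}\,C_2^*$ into two coparallel classes of $G$, and counting classes gives $|\CP(G)|=1+\sum_i\bigl(|\CP(G_i)|+s_i\bigr)$, where $s_i$ is the number of such split classes of $G_i$; so $\sum_i s_i$ more vectors are needed, one separating the two halves of each split class.

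For a split class $C^*$ of $G_i$ I would write down an explicit witness. In the cycle-of-blobs picture of $C^*$ inside $G_i$ (\Cref{structure of cpc}), $C_1^*$ and $C_2^*$ are the two arcs between the blob containing $p_i$ and the blob containing $q_i$; let $H^i$ consist of the arc $C_1^*$ together with all blobs of $C^*$ that it meets, and let $H_{i,C^*}$ be the subgraph of $G$ with edge set
\begin{align*}
C\;\cup\;H^i\;\cup\;\bigcup_{j\ne i}E(G_j).
\end{align*}
Contracting every $G_j$ ($j\ne i$) and every blob of $C^*$ on the arc to a point turns $H_{i,C^*}$ into a single cycle; since each contracted piece is $2$-edge-connected and is attached at vertices, $H_{i,C^*}$ is connected and bridgeless, hence $2$-edge-connected, and $\chi^{H_{i,C^*}}$ is $1$ on $C$, on $C_1^*$, on $E(G_j)$ for $j\ne i$ and on every coparallel class of $G$ lying inside a blob of $H^i$, while it is $0$ on $C_2^*$. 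Finally I would project the collected vectors to $\R^{\CP(G)}$, with coordinates $x_D$ indexed by the coparallel classes $D$ of $G$: the first batch spans, in a direct sum over $i$ since the supports are disjoint, the subspace $U$ cut out by $x_C=0$ and by $x_{C_1^*}=x_{C_2^*}$ over all split classes $C^*$; modulo $U$ one is left with the coordinate $x_C$ and the differences $x_{C_1^*}-x_{C_2^*}$, and there $\chi^G$ maps to the indicator of $C$ while $\chi^{H_{i,C^*}}$ maps to the indicator of $C$ plus that of the coordinate for $C^*$ — an evidently independent family. Hence the $|\CP(G)|$ collected vectors are linearly independent, completing the induction. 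I expect the main obstacle to be the combinatorial bookkeeping around \Cref{cpc of G-C obs} — that every coparallel class of $G$ contained in $E(G_i)$ is either a coparallel class of $G_i$ or one half of a split class, whence $|\CP(G)|=1+\sum_i(|\CP(G_i)|+s_i)$ — together with checking that each witness $H_{i,C^*}$ really is $2$-edge-connected; the closing linear algebra is then routine.
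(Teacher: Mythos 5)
Your argument is correct, but it takes a genuinely different and substantially heavier route than the paper. For the lower bound the paper does not induct at all: since $\mathbf{0}\in\TECSP(G)$, the affine hull is a linear subspace, and for each coparallel class $C$ contained in a component $G_i$ (after deleting bridges) it simply writes $\chi^C=\chi^{G_i}-\sum_j\chi^{H_j}$, where $H_1,\dots,H_k$ are the connected components of $G_i-C$ (all $2$-edge-connected by \Cref{G-C}); the vectors $\chi^C$, $C\in\CP(G)$, have disjoint supports, so they are trivially linearly independent and the bound follows in a few lines, with bridges and disconnected $G$ handled in the same breath. Your proof instead removes an entire coparallel class, recurses into the components $G_1,\dots,G_k$, and accounts for the classes of $G_i$ that split via \Cref{cpc of G-C obs}, constructing explicit $2$-edge-connected witnesses $H_{i,C^*}$ for each split class; I checked the counting $\vert\CP(G)\vert=1+\sum_i(\vert\CP(G_i)\vert+s_i)$, the $2$-edge-connectivity of the witnesses, and the quotient argument (note the image of $\chi^{H_{i,C^*}}$ in the coordinate $x_{C_1^*}-x_{C_2^*}$ is $\pm1$ rather than $+1$, which is harmless), and they all go through, essentially because any other class of $G_i$ lies entirely inside a single blob of $C^*$ by \Cref{completelycontained}. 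What your route buys is an explicit linearly independent family of $\vert\CP(G)\vert$ vertices of $\TECSP(G)$ — machinery of the same flavour the paper only deploys later, in the facet proofs such as \Cref{boxineq1} — at the cost of the split-class bookkeeping; what the paper's route buys is brevity and the fact that the general statement (arbitrary $G$, with bridges and several components) is covered directly, whereas you only treat the $2$-edge-connected case and would still need the (easy, but unstated) reduction: delete bridges, which changes neither $\CP(G)$ nor the polytope beyond forced zero coordinates, and sum over components using that $\mathbf{0}$ lies in each factor and the supports are disjoint.
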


\begin{proof}
	Suppose that $e \in E(G)$ is a bridge of $G$. Then we have $x_e=0$ for all $x \in \TECSP(G)$, since bridges are never contained in $2$-edge-connected subgraphs. Moreover, bridges do not have any impact on the number of coparallel classes of a graph $G$. In fact, we can delete all bridges and assume that $G$ does not contain any bridges. Now, by \Cref{observation} and the fact that $\CP(G)$ forms a partition of $E(G)$, we have $\dim(\mathrm{TECSP}(G)) \le \vert \CP(G)\vert$. Assuming $G$ does not contain any bridges, all connected components $G_1,\ldots,G_m$ of $G$ are $2$-edge-connected. Let $C \in \CP(G)$ be a coparallel class of $G$. Then $C \subseteq E(G_i)$ for some $i \in [m]$ due to \Cref{completelycontained}. Let $H_1,\ldots,H_k$ denote the connected components of $G_i-C$. Due to \Cref{G-C}, $\chi^{H_j} \in \TECSP(G)$ for all $j \in [k]$. Since $\textbf{0} \in \TECSP(G)$, $\aff(\TECSP(G))$ is a linear subspace of $\R^{E(G)}$ and $\chi^C=\chi^{G_i}-\sum_{j=1}^k \chi^{H_j} \in \aff(\TECSP(G))$, which shows $\dim(\aff(\TECSP(G))) \ge \vert \CP(G) \vert$.
\end{proof}

\begin{remark}
We can also define the $2$-edge-connected subgraph polytope for a \emph{multigraph}~$G$, i.e., a graph with multi-edges between common vertex pairs. In $G$, we may replace every multi-edge by a path of length two. Such a path consists of two coparallel edges. Denoting the obtained simple graph by $G'$, the polytopes $\TECSP(G)$ and $\TECSP(G')$ are affinely isomorphic due to \Cref{observation}. Therefore, we also cover multigraphs in this article. To simplify notation, we will always assume that the considered graphs are simple in the following.
\end{remark}

Before we end the preliminaries, we state the following corrected version of \cite[Corollary 4.23]{BARAHONA198640}, which will be needed in one of the proofs. Let us denote by $\ESP(G)$ the convex hull of incidence vectors of (not necessarily connected) Eulerian subgraphs of a graph $G$, i.e., subgraphs in which every vertex has even degree. It is well known that every Eulerian subgraph is an edge-disjoint union of cycles. For an explanation on the correction, see the appendix (\Cref{appendix}).

\begin{theorem}[\cite{BARAHONA198640}]
    \label{facetsESP}
    Let $G$ be a graph, let $E_3 \subseteq E(G)$ be the edges contained in cuts of size $3$ and let $E' \subseteq E(G)$ be an inclusion-wise maximal set of edges not containing bridges or $2$-cuts. Then $\ESP(G)$ is given by
    \begin{enumerate}[(a)]
        \item $x_e=0$ for each bridge $e \in E(G)$,
        \item $x_{e_1}=x_{e_i}$ for each coparallel class $C=\{e_1,\ldots,e_k\} \in \CP(G)$ and all $i=2,\ldots,k$,
        \item $0 \le x_e \le 1$ for each $e \in E(G) \setminus E_3$ that is not a bridge,
        \item $x(F)-x(C\setminus F) \le \vert F \vert -1$ for each minimal cut $C \subseteq E'$, and with property (P), and each $F \subseteq C$, $\vert F \vert$ odd; Hereby property (P) is that if $C$ has a chord defined by cuts $F_1,F_2 \subseteq E(G)$, then $F_1$ or $F_2$ has cardinality $2$.
    \end{enumerate}
    Moreover, the system above is nonredundant.
\end{theorem}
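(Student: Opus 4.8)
The polytope $\ESP(G)$ is exactly the cycle polytope $\Cyc(M)$ of the graphic matroid $M = M(G)$: the cycles of a binary matroid are the disjoint unions of circuits, and for $M(G)$ these are precisely the edge sets meeting every cut in an even number of edges, i.e.\ the edge sets of (not necessarily connected) Eulerian subgraphs. Since graphic matroids are binary, the plan is to specialize the facet description of $\Cyc(M)$ for binary matroids from \cite{BARAHONA198640} to the graphic case and translate the matroid vocabulary into graph vocabulary. Under this dictionary a coloop of $M$ is a bridge of $G$; two elements are in series in $M$ (parallel in $M^*$) exactly when they form a minimal $2$-cut, so the series classes of $M$ are the coparallel classes $\CP(G)$; a cocircuit of $M$ is an inclusion-minimal cut of $G$; a cocircuit of size $3$ is a minimal $3$-cut, so $E_3$ is the set of edges lying in some $3$-cut; and the matroidal chord of a cocircuit becomes the chord of a cut as defined in \Cref{preliminaries}. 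Under this translation the four families in \cite[Corollary 4.23]{BARAHONA198640} become (a)--(d).

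Validity is immediate in graph terms, so I would record it first. A bridge lies in no circuit, hence $x_e = 0$ on $\ESP(G)$, giving (a); incidence vectors are $0/1$, giving (c). For (b) and (d) the key fact is that the intersection of any cut of $G$ with the edge set of a cycle, hence of any Eulerian subgraph $H$, has even cardinality: applied to the minimal $2$-cut $\{e_1,e_i\}$ this forces $|E(H)\cap\{e_1,e_i\}| \in \{0,2\}$, i.e.\ $x_{e_1} = x_{e_i}$ for every incidence vector and hence on all of $\ESP(G)$, giving (b); applied to a minimal cut $C$ with $F\subseteq C$, $|F|$ odd, it rules out $E(H)\cap C = F$, so $x(F) - x(C\setminus F) < |F|$ on incidence vectors, which together with integrality yields (d). Thus $\ESP(G)$ is contained in the polyhedron defined by (a)--(d).

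For the reverse inclusion and for nonredundancy I would invoke \cite{BARAHONA198640}: their result states that (for the matroid in question) these are precisely the facet-defining inequalities together with the equalities cutting out the affine hull, so every facet of $\Cyc(M)$ occurs on the list and no listed inequality is redundant. The only delicate point --- and the reason for stating a \emph{corrected} version --- is the exact set of side conditions under which a cocircuit inequality is retained in family (d), namely that the minimal cut $C$ be contained in $E'$ (which avoids redundancy between (d) and the equalities (b)) and satisfy property~(P). The role of property~(P) is that a minimal cut with a chord whose two ``halves'' $F_1,F_2$ both have size at least $3$ produces an inequality that is a nonnegative combination of cocircuit inequalities coming from strictly smaller cuts (so it must be dropped to keep the system nonredundant), whereas if one half has size $2$ --- i.e.\ the chord forms a $2$-cut with a single edge of $C$ --- the inequality remains facet-defining.

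I expect the main obstacle to be precisely this reconciliation: verifying that with the corrected property~(P) (and the constraint $C\subseteq E'$) the list is simultaneously complete, with no facet lost, and nonredundant, with no non-facet kept, and matching this against the slightly different formulation in \cite[Corollary 4.23]{BARAHONA198640}; the detailed comparison and the verification of the corrected condition are carried out in \Cref{appendix}.
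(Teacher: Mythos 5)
Your proposal matches the paper's treatment: the theorem is a corrected citation of \cite[Corollary 4.23]{BARAHONA198640}, established via the identification $\ESP(G)=\Cyc(\M(G))$, the matroid-to-graph dictionary (coloops/bridges, series classes/coparallel classes, cocircuits/minimal cuts, cotriangles/$3$-cuts), and the observation that the ``sum of two cocircuit inequalities'' argument against chorded cuts breaks down exactly when one of the two cuts has size $2$ --- which is precisely property (P) and is what the paper's appendix verifies (together with the counterexample showing the original chord condition is too strong). Your direct validity check of (a)--(d) via the even-intersection property of cuts with Eulerian subgraphs is correct, and deferring completeness and nonredundancy to \cite{BARAHONA198640} plus the appendix is exactly what the paper does.
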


Since this is a nonredundant system, the inequalities \textit{(c)} and \textit{(d)} are actually facet-defining for $\ESP(G)$.

\section{Facets}

\label{facets}

We study different types of supporting hyperplanes for the $2$-edge-connected subgraph polytope $\TECSP(G)$ and when they define facets. In the following we will always assume that the underlying graph $G$ is $2$-edge-connected as already justified in the introduction.

\subsection{The box inequalities}

We characterize when the box inequalities $0 \le x_e \le 1$ define facets of $\TECSP(G)$.

\begin{theorem}
    \label{boxineq0}
    Let $G$ be a $2$-edge-connected graph, $C \in \CP(G)$ and $e\in C$. Then the inequality
    \begin{align*}
        x_e \ge 0
    \end{align*}
    defines a facet of $\TECSP(G)$ if and only if $e$ is not contained in a $3$-cut.
\end{theorem}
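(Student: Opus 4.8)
To show $x_e \ge 0$ defines a facet of $\TECSP(G)$, I would exhibit $\dim(\TECSP(G)) = |\CP(G)|$ affinely independent $2$-edge-connected subgraphs, each avoiding $e$. Equivalently, working inside the linear subspace $\aff(\TECSP(G))$ (which contains $\mathbf 0$), I would produce $|\CP(G)| - 1$ linearly independent incidence vectors $\chi^H$ with $e \notin E(H)$. By \Cref{observation}, it is natural to index everything by coparallel classes: I will construct, for each coparallel class $C' \in \CP(G) \setminus \{C\}$, a $2$-edge-connected subgraph $H_{C'}$ with $C' \subseteq E(H_{C'})$, $C \cap E(H_{C'}) = \emptyset$, and such that the collection $\{\chi^{H_{C'}} : C' \neq C\}$ is linearly independent; a triangular structure with respect to a suitable ordering of the classes would close the argument.

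**The forward direction (necessity).** If $e$ is contained in a $3$-cut $\delta(S)$, then no $2$-edge-connected subgraph of $G$ can use exactly one or exactly two edges of $\delta(S)$ without creating a bridge across the cut; hence every $2$-edge-connected $H$ satisfies $|E(H) \cap \delta(S)| \in \{0, 3\}$. Writing $\delta(S) = \{e, f, g\}$ and using \Cref{observation} to see $x_e = x_f = x_g$ on $\TECSP(G)$, this means $x_e = x_f = x_g \in \{0,1\}$ on all vertices, so in fact $\TECSP(G)$ lies on the hyperplane where the face $\{x_e = 0\}$ is cut out also by, e.g., $x(\delta(S)) \le 2$ together with the coparallel equations — more precisely the face $\{x \in \TECSP(G) : x_e = 0\}$ coincides with $\{x : x_e = x_f = x_g = 0\}$, which has dimension at most $|\CP(G)| - 2$ (it kills two coparallel-class coordinates if $e$, $f$, $g$ lie in distinct classes, or is itself a coparallel class of size $3$). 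Either way $x_e \ge 0$ is not facet-defining. I should double-check the bookkeeping in the case where two of $e, f, g$ are coparallel (so $\delta(S)$ is not a minimal cut but $e$ still lies in a $3$-cut): the structural picture of coparallel classes from \Cref{structure of cpc} should make this clean.

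**The backward direction (sufficiency) and the main obstacle.** Assume $e$ lies in no $3$-cut. I would delete $e$ and argue that enough of the structure survives: since $e$ is not a bridge and, I suspect, $G - e$ is still $2$-edge-connected in the relevant sense unless $e$ lies in a $2$-cut. The two cases are: (i) $\{e\}$ is a coparallel class by itself (so $e$ is in no $2$-cut); then $G - e$ is $2$-edge-connected, $\CP(G-e) \supseteq \CP(G)\setminus\{C\}$ in a controlled way, and I can build the needed subgraphs of $G - e$ by taking $G - e$ itself together with $(G-e) - C'$-type pieces, mimicking the dimension computation in the proof of \Cref{dimension}. (ii) $|C| \ge 2$: here deleting $e$ destroys all of $C$ (by \Cref{completelycontained} any $2$-edge-connected subgraph avoiding $e$ avoids all of $C$), $G - C$ splits into $2$-edge-connected components $G_1, \dots, G_k$ (with $k \ge 3$ ruled out by the no-$3$-cut hypothesis, so $|C| = 2$ and $k = 2$), and I would use that each $G_i$ is $2$-edge-connected with $\CP(G_i)$ related to $\CP(G)$ via \Cref{cpc of G-C obs} to recursively produce affinely independent vectors in each factor and combine them. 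The main obstacle is the careful accounting of coparallel classes under edge/class deletion — ensuring that the $|\CP(G)| - 1$ subgraphs I write down really are linearly independent, which requires choosing an ordering of $\CP(G) \setminus \{C\}$ (say, refining the block-cut structure of \Cref{structure of cpc}) so that the matrix of their incidence vectors is block-triangular with nonzero diagonal blocks. I expect the combinatorial-independence step, not the $2$-edge-connectivity verification, to be where the real work lies.
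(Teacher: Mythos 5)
Both directions of your argument contain genuine errors. In the necessity direction, the claim that every $2$-edge-connected $H$ satisfies $\vert E(H)\cap\delta(S)\vert\in\{0,3\}$ for a $3$-cut $\delta(S)$ is false: a $2$-edge-connected subgraph may meet a cut in exactly two edges (in $K_4$ with $S=\{v\}$, a triangle through $v$ uses exactly two edges of the $3$-cut $\delta(\{v\})$). Consequently your invocation of \Cref{observation} is a misapplication --- the three edges of a $3$-cut are never pairwise coparallel (if two of them formed a $2$-cut, the symmetric difference with $\delta(S)$ would make the third a bridge), so $x_e=x_f=x_g$ does \emph{not} hold on $\TECSP(G)$, and the face $\{x_e=0\}$ does \emph{not} coincide with $\{x_e=x_f=x_g=0\}$ (the $K_4$ triangle above lies in the former but not the latter). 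The correct germ of the idea is different: on the face $\{x_e=0\}$ every vertex satisfies the \emph{extra} equation $x_f=x_g$ (a $2$-edge-connected subgraph avoiding $e$ meets $\{f,g\}$ in $0$ or $2$ edges), and since $f,g$ lie in distinct coparallel classes this equation is independent of $x_e=0$ modulo $\aff(\TECSP(G))$; equivalently, as the paper argues, $x_e\ge 0$ is the sum of the two valid asymmetric cut inequalities $x_f-x_g-x_e\le 0$ and $x_g-x_f-x_e\le 0$, which define distinct proper faces.

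In the sufficiency direction, your structural claim in case (ii) --- that the no-$3$-cut hypothesis forces $\vert C\vert=2$ --- is also false: take four disjoint copies of $K_5$ joined into a cycle by four edges $e_1,\dots,e_4$; these form a coparallel class of size $4$ and the graph has no $3$-cuts at all (any cut either splits a $K_5$, contributing at least $4$ edges, or is a cut of the contracted $4$-cycle and hence of even size). So your case analysis does not cover all graphs satisfying the hypothesis. Moreover, the place where the no-$3$-cut hypothesis actually does the work is left as ``in a controlled way'': one must show that no two edges $f_1,f_2\notin C$ that are non-coparallel in $G$ become coparallel in $G-C$, which is exactly what fails when $e$ lies in a $3$-cut $\{e,f_1,f_2\}$. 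Once that is established, the paper avoids your explicit vector construction entirely: by \Cref{completelycontained} the face $\{x_e=0\}$ is affinely isomorphic to $\TECSP(G-C)$, so \Cref{dimension} gives its dimension as $\vert\CP(G-C)\vert=\vert\CP(G)\vert-1$ directly. Your construction-based plan could in principle be completed, but as written it rests on a false case reduction and omits the one step where the hypothesis is used.
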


\begin{proof}
    Suppose that $e$ is contained in a $3$-cut $\delta(S)=\{e,e_1,e_2\}$ for some $ \emptyset \subsetneq S \subsetneq V(G)$. We consider the inequalities
    \begin{align*}
        x_{e_1}-x_{e_2}-x_e \le 0
    \end{align*}
    and
    \begin{align*}
        x_{e_2}-x_{e_1}-x_e \le 0.
    \end{align*}
    These are both supporting hyperplanes of $\TECSP(G)$ since $2$-edge-connected subgraphs $H$ of $G$ cannot contain exactly one edge of a cut, as otherwise $H$ would contain a bridge. In fact, they are special cases of the so-called \emph{asymmetric cut inequalities} that will be studied in \Cref{section asymmetric cut}. These two inequalities define different proper faces of $\TECSP(G)$, since e.g. $\chi^G$ is not contained in the corresponding face, and they already imply $x_e \ge 0$. Hence, the latter does not define a facet.

    Suppose now that $e$ is not contained in a $3$-cut and let $\F \subseteq \TECSP(G)$ denote the face defined by $x_e=0$. By an analogous argument as in \cite{BARAHONA198640}, we get the result as follows. Let $C \in \CP(G)$ be the coparallel class that contains $e$. $\F$ is affinely isomorphic to the $2$-edge-connected subgraph polytope $\TECSP(G-C)$ of the graph $G-C$. Since $e$ is not contained in a $3$-cut, the same holds for any $e' \in C$, and therefore, two edges $f_1,f_2 \in E(G) \setminus C$ are coparallel in $G$ if and only if they are coparallel in $G-C$. This implies $\vert \CP(G-C) \vert =\vert \CP(G) \vert-1$ and the claim follows by \Cref{dimension}.
        
\end{proof}


\begin{theorem}
    \label{boxineq1}
    Let $G$ be a $2$-edge-connected graph and $e \in E(G)$. Then the inequality
    \begin{align*}
        x_e \le 1
    \end{align*}
    defines a facet of $\TECSP(G)$.
\end{theorem}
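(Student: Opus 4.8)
The plan is to prove that the face $\mathcal F := \{x \in \TECSP(G) \mid x_e = 1\}$ has dimension $|\CP(G)|-1$. Validity of $x_e\le 1$ is immediate since $\TECSP(G)$ has $\{0,1\}$-vertices, and $\mathcal F$ is a proper face because $\chi^{\emptyset}=\mathbf 0\in\TECSP(G)\setminus\mathcal F$; hence $\dim\mathcal F\le \dim\TECSP(G)-1=|\CP(G)|-1$ by \Cref{dimension}, and it remains to produce $|\CP(G)|$ affinely independent points of $\mathcal F$. By \Cref{observation} I may work in ``reduced coordinates'' indexed by $\CP(G)$, in which $\chi^H$ is the indicator vector of $\CP_H(G)\subseteq\CP(G)$.

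Let $C\in\CP(G)$ be the coparallel class with $e\in C$. The first point is $\chi^G$. For every $C'\in\CP(G)\setminus\{C\}$ let $H_{C'}$ be the connected component of $G-C'$ that contains $e$ (well defined since $e\notin C'$). By \Cref{G-C} the graph $H_{C'}$ is $2$-edge-connected, so $\chi^{H_{C'}}\in\TECSP(G)$; by \Cref{completelycontained} the whole class $C$ lies in $H_{C'}$, so $e\in E(H_{C'})$ and $\chi^{H_{C'}}\in\mathcal F$; and $C'\cap E(H_{C'})=\emptyset$. I claim the $|\CP(G)|-1$ vectors $\chi^G-\chi^{H_{C'}}$ ($C'\neq C$) are linearly independent, which together with $\chi^G$ yields the required affinely independent family. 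In reduced coordinates the $C$-entry of $\chi^G-\chi^{H_{C'}}$ is $0$, and for $C''\neq C$ its $C''$-entry equals $1$ if $C''\not\subseteq E(H_{C'})$ and $0$ otherwise --- this is well defined because by \Cref{completelycontained} each class $C''$ is either contained in $H_{C'}$ or disjoint from it. Thus the claim amounts to nonsingularity of the matrix $M=(M_{C',C''})$ over $\CP(G)\setminus\{C\}$ with $M_{C',C''}=1$ iff $C''\not\subseteq E(H_{C'})$.

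To show $M$ is nonsingular I order $\CP(G)\setminus\{C\}$ by nondecreasing value of $|E(H_{C'})|$ and argue that $M$ is then upper triangular with ones on the diagonal, so $\det M=1$. The diagonal entries are $M_{C',C'}=1$ since $C'\cap E(H_{C'})=\emptyset$. Suppose $C'\neq C''$ and $M_{C',C''}=1$, i.e.\ $C''$ lies in a connected component $K$ of $G-C'$ different from $H_{C'}$; in particular $G-C'$ is disconnected, so $|C'|\ge 2$. Since distinct components have disjoint edge sets, $C''\subseteq E(K)$ is disjoint from $E(H_{C'})$, so deleting $C''$ leaves $H_{C'}$ intact; hence $H_{C'}$ is still a connected subgraph of $G-C''$ containing $e$, giving $E(H_{C'})\subseteq E(H_{C''})$. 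Moreover, as $H_{C'}$ is a component of $G-C'$ and $G$ is connected, $\emptyset\neq\delta_G(V(H_{C'}))\subseteq C'$, so some $c\in C'$ has exactly one endpoint in $V(H_{C'})$; then $c\notin E(H_{C'})$, while $c\in E(G-C'')$ has an endpoint in $V(H_{C'})\subseteq V(H_{C''})$, so $c\in E(H_{C''})$. Therefore $E(H_{C'})\subsetneq E(H_{C''})$, i.e.\ $C'$ precedes $C''$ in the order, proving triangularity and finishing the proof.

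The step that needs the most care is the structural claim in the last paragraph: that passing from ``cutting at $C'$'' to ``cutting at $C''$'' can only enlarge the $e$-component once $C''$ has been separated from $e$ by $C'$. This rests entirely on facts already at hand --- components of $G-C'$ are $2$-edge-connected (\Cref{G-C}), every coparallel class sits inside one component of $G-C'$ (\Cref{completelycontained}), and a component of $G-C'$ meets the rest of $G$ only through edges of $C'$ --- so it should reduce to a short argument; everything else (validity, the dimension bound, the reduction to $\det M\neq 0$) is routine.
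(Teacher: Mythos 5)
Your proposal is correct and follows essentially the same route as the paper: it uses the same family of points ($\chi^G$ together with the incidence vectors of the $e$-containing components of $G-C'$ for each other coparallel class $C'$), passes to coordinates indexed by coparallel classes, and establishes linear independence via a triangular matrix with unit diagonal. The only difference is cosmetic: you order the classes by the size of $E(H_{C'})$, whereas the paper defines the partial order $C_i <_e C_j \Leftrightarrow C_i \nsubseteq E(G_j^e)$ and takes a linear extension — your structural claim $E(H_{C'})\subsetneq E(H_{C''})$ is exactly the paper's claim that $C_i <_e C_j$ implies $G_j^e \subseteq G_i^e$ with $C_j \subseteq E(G_i^e)$.
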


\begin{proof}
    Let $\F \subseteq \TECSP(G)$ denote the face defined by $x_e = 1$. Let $\CP(G)=\{C_e,C_1,\ldots,C_\ell\}$, where $C_e$ is the coparallel class containing $e$. For $j \in [\ell]$, we denote by $G_j^e$ the connected component of the graph $G-C_j$ containing $e$. Since $G_j^e$ is $2$-edge-connected (by \Cref{G-C}), $\chi^G$ and $\chi^{G_j^e}$, for $j=1,\ldots,\ell$, are vertices of $\F$. We now show that these $\vert \CP(G)\vert$ many vectors are affinely independent, which by \Cref{dimension} implies that $\F$ is a facet. For this aim, we define a partial order on the coparallel classes $C_1,\ldots,C_\ell$ as follows: for $i,j \in [\ell],i\neq j$, let
    \begin{align*}
        C_i <_e C_j \Longleftrightarrow C_i \nsubseteq E(G_j^e).
    \end{align*}
    To show that $<_e$ defines a partial order, we show the following claim.
    \begin{align*}
        C_i <_e C_j \Longrightarrow G_j^e \subseteq G_i^e \text{ and } C_j \subseteq E(G_i^e),
    \end{align*}
    which can be seen as follows. Suppose $C_i <_e C_j$. Then by definition $C_i \nsubseteq E(G_j^e)$. Due to \Cref{completelycontained}, this already means that $C_i \cap E(G_j^e)=\emptyset$. Hence, $G-C_i$ contains all edges of $G_j^e$ which implies that $G_j^e \subseteq G_i^e$. Since at least one edge $f$ of $C_j$ is incident to an edge of $G_j^e$ and since $f$ is contained in $G-C_i$ it follows that $f \in E(G_i^e)$. By \Cref{completelycontained}, we conclude that $C_j \subseteq E(G_i^e)$.
    
    Using this claim we can easily verify that $<_e$ defines a partial order on $C_1,\ldots,C_\ell$: If $C_i <_e C_j$, then $C_j \subseteq E(G_i^e)$ and hence, $C_j \nless_e C_i$. This shows that $<_e$ is antisymmetric. Suppose now that $C_i <_e C_j$ and $C_j <_e C_k$. From the first relation we have $C_i \nsubseteq E(G_j^e)$ by definition and from the second relation we have $G_k^e \subseteq G_j^e$ by the claim above. Hence, $C_i \nsubseteq E(G_k^e)$ which implies $C_i <_e C_k$ and in particular, $<_e$ is transitive.
    
    The vectors $\chi^{G_j^e}$ satisfy
    \begin{align*}
        \chi_f^{G_j^e}=\begin{cases}
            1, & \text{ for all } f \in C_e, \\
            0, & \text{ for all } C_i \le_e C_j \text{ and all } f \in C_i, \text{ and} \\
            1, & \text{ for all } C_i \nleq_e C_j \text{ and all } f \in C_i,
        \end{cases}
    \end{align*}
    for all $j \in [\ell]$.
    
    Let $\pi \colon \TECSP(G) \to \R^{\ell}$ be the map defined by $\pi(x)_i \coloneqq x_f$, for any $f \in C_i$, for all $i \in [\ell]$. By \Cref{observation}, this is well-defined. Furthermore, we have
    \begin{align*}
        \dim(\aff(\chi^G,\chi^{G_1^e},\ldots,\chi^{G_\ell^e}))&=\dim(\aff(\pi(\chi^G),\pi(\chi^{G_1^e}),\ldots,\pi(\chi^{G_\ell^e})))\\ &=\dim(\mathrm{span}(\pi(\chi^G)-\pi(\chi^{G_1^e}),\ldots,\pi(\chi^G)-\pi(\chi^{G_\ell^e}))).
    \end{align*}
    Without loss of generality we can assume that our indices satisfy $i \le j$ if $C_i \le C_j$. Writing the vectors $\pi(\chi^G)-\pi(\chi^{G_1^e}),\ldots,\pi(\chi^G)-\pi(\chi^{G_\ell^e})$ row-wise in a matrix, yields a lower triangular $\ell \times \ell$-matrix that has ones in the diagonal. Therefore, the matrix has rank $\ell$, which finishes the proof.
\end{proof}

\subsection{The asymmetric cut inequalities}

\label{section asymmetric cut}

In the following we will focus on inequalities that are defined by cuts. Let $H \subseteq G$ both be $2$-edge-connected and let $\delta(S)$ be a cut for some $\emptyset \subsetneq S \subsetneq V(G)$. It is not possible that $E(H)$ contains only one edge $e \in \delta(S)$ because otherwise $H$ has a bridge and is not $2$-edge-connected. Therefore, for the vertex $\chi^H$ of $\TECSP(G)$ the inequality
\begin{align*}
    \chi_e^H-\chi^H(\delta(S)\setminus \{e\}) \le 0
\end{align*}
holds. This shows that for $\emptyset \subsetneq S \subsetneq V(G)$ and $e \in \delta(S)$ the inequality
\begin{align}
    \label{asymmetric}
    x_e-x(\delta(S)\setminus\{e\}) \le 0
\end{align}
defines a supporting hyperplane of $\TECSP(G)$. An inequality as in \eqref{asymmetric} will be called an \textit{asymmetric cut inequality}. In the following we study, when this inequality defines a facet. We can assume that $\vert\delta(S)\vert \ge 3$: $\vert \delta(S)\vert=1$ is not possible since we only consider $2$-edge-connected graphs; suppose that $\vert \delta(S) \vert =2$. Then every $2$-edge-connected subgraph of $G$ contains either both edges of $\delta(S)$ or none of them. Hence, every vertex of $\TECSP(G)$ is contained in the face defined by the corresponding asymmetric cut inequality. This already shows

\begin{observation}
\label{cond1}
Let $G$ be $2$-edge-connected and $\emptyset \subsetneq S \subsetneq V$ such that $\vert\delta(S)\vert=2$. Then the face~$\F \subseteq \TECSP(G)$ defined by the asymmetric cut inequality
\begin{align*}
    x_e-x(\delta(S)\setminus\{e\}) \le 0
\end{align*}
is given by $\F=\TECSP(G)$. In particular, the asymmetric cut inequality does not define a facet of $\TECSP(G)$ in this case.
\end{observation}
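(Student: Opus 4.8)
The plan is to show that this asymmetric cut inequality is in fact satisfied with equality on \emph{all} of $\TECSP(G)$, so that the face it defines is the entire polytope. Write $\delta(S)=\{e,f\}$, so that the inequality reads $x_e-x_f\le 0$. Since $G$ is $2$-edge-connected it has no bridges, hence no proper nonempty subset of $\{e,f\}$ is a cut; thus $\{e,f\}$ is a minimal $2$-cut, i.e., $e$ and $f$ are coparallel and lie in a common coparallel class $C\in\CP(G)$.

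Next I would invoke \Cref{observation}: for every $x\in\TECSP(G)$ and every pair of edges of $C$ — in particular for $e$ and $f$ — we have $x_e=x_f$. Consequently $x_e-x(\delta(S)\setminus\{e\})=x_e-x_f=0$ for all $x\in\TECSP(G)$, so the asymmetric cut inequality holds with equality everywhere and the face $\F$ it defines equals $\TECSP(G)$. (Alternatively, one can bypass the language of coparallel classes altogether: by \Cref{completelycontained} every $2$-edge-connected subgraph $H\subseteq G$ contains either both $e$ and $f$ or neither, so $\chi^H_e=\chi^H_f$ at every vertex $\chi^H$ of $\TECSP(G)$; since $\TECSP(G)$ is the convex hull of these vertices, $x_e=x_f$ holds on all of $\TECSP(G)$.)

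For the final assertion, a facet of $\TECSP(G)$ is by definition a proper face, whereas here $\F=\TECSP(G)$; hence the asymmetric cut inequality does not define a facet in this case. There is essentially no obstacle: the only step requiring a line of care is the remark that a $2$-cut of a bridgeless graph is automatically minimal, which is what lets us place $e$ and $f$ in the same coparallel class (and in the alternative argument this is replaced by the direct appeal to \Cref{completelycontained}).
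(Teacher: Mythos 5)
Your proof is correct and follows essentially the same route as the paper: the key point in both is that a $2$-edge-connected subgraph must contain both edges of the $2$-cut or neither (else one would be a bridge), so every vertex of $\TECSP(G)$ satisfies the inequality with equality. Your main phrasing via coparallel classes and \Cref{observation} is just a repackaging of this fact (which the paper has already recorded there), and your parenthetical alternative via \Cref{completelycontained} is literally the paper's argument.
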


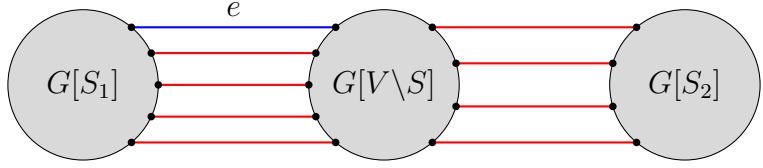
\begin{figure}
    \centering
        \begin{tikzpicture}[scale=1]
            \draw[fill=gray!30] (-2, 0) circle (1) node {$G[S_1]$};
            \draw[fill=gray!30] (2, 0) circle (1) node {$G[V {\setminus} S]$};
            \draw[fill=gray!30] (6, 0) circle (1) node {$G[S_2]$};

            \coordinate (v1) at ($(-2,0)+(50:1)$);
            \coordinate (v2) at ($(-2,0)+(25:1)$);
            \coordinate (v3) at ($(-2,0)+(0:1)$);
            \coordinate (v4) at ($(-2,0)+(-25:1)$);
            \coordinate (v5) at ($(-2,0)+(-50:1)$);

            \coordinate (w1) at ($(2,0)+(130:1)$);
            \coordinate (w2) at ($(2,0)+(155:1)$);
            \coordinate (w3) at ($(2,0)+(180:1)$);
            \coordinate (w4) at ($(2,0)+(205:1)$);
            \coordinate (w5) at ($(2,0)+(230:1)$);
            
            \draw[blue, thick] (v1) -- (w1) node[midway, above, black] {$e$};
            \fill (v1) circle (0.05);
            \fill (w1) circle (0.05);
            
            \draw[red, thick] (v2) -- (w2);
            \fill (v2) circle (0.05);
            \fill (w2) circle (0.05);
            
            \draw[red, thick] (v3) -- (w3);
            \fill (v3) circle (0.05);
            \fill (w3) circle (0.05);

            \draw[red, thick] (v4) -- (w4);
            \fill (v4) circle (0.05);
            \fill (w4) circle (0.05);

            \draw[red, thick] (v5) -- (w5);
            \fill (v5) circle (0.05);
            \fill (w5) circle (0.05);

            \coordinate (x1) at ($(2,0)+(50:1)$);
            \coordinate (x2) at ($(2,0)+(16.66:1)$);
            \coordinate (x3) at ($(2,0)+(-16.66:1)$);
            \coordinate (x4) at ($(2,0)+(-50:1)$);

            \coordinate (y1) at ($(6,0)+(130:1)$);
            \coordinate (y2) at ($(6,0)+(180-16.66:1)$);
            \coordinate (y3) at ($(6,0)+(180+16.66:1)$);
            \coordinate (y4) at ($(6,0)+(230:1)$);

            \draw[red, thick] (x1) -- (y1);
            \fill (x1) circle (0.05);
            \fill (y1) circle (0.05);
            
            \draw[red, thick] (x2) -- (y2);
            \fill (x2) circle (0.05);
            \fill (y2) circle (0.05);
            
            \draw[red, thick] (x3) -- (y3);
            \fill (x3) circle (0.05);
            \fill (y3) circle (0.05);

            \draw[red, thick] (x4) -- (y4);
            \fill (x4) circle (0.05);
            \fill (y4) circle (0.05);

        \end{tikzpicture}
    \caption{Example for an asymmetric cut inequality with a non-minimal cut $\delta(S) \supsetneq \delta(S_1)$, where $S=S_1 \dot{\cup} S_2$.}
    \label{Disconnected induced subgraph}
\end{figure}

Furthermore we can assume that the set $\delta(S)$ is a minimal cut set in the sense that there is no set $F \subsetneq \delta(S)$ that is also a cut for the following reason. Suppose such an $F$ exists. We can assume w.l.o.g. that $e \in F$ since if $F$ is a cut, so is $\delta(S) \triangle F = \delta(S) \setminus F$ and one of these two sets contains $e$. But then the inequality
\begin{align}
    \label{dom}
    x_e-x(F \setminus \{e\}) \le 0
\end{align}
is also an asymmetric cut inequality and every incidence vector $\chi^H$ of a $2$-edge-connected subgraph $H \subseteq G$ that belongs to the face defined by \eqref{asymmetric} also belongs to the face defined by \eqref{dom}. This means that inequality \eqref{dom} dominates inequality \eqref{asymmetric} and we can study the latter instead. An example for such a behavior can be seen in \Cref{Disconnected induced subgraph}, where $S=S_1 \dot{\cup} S_2$, $\delta(S_1) \subsetneq \delta(S)$.

From now on, we hence assume that $\vert \delta(S) \vert \ge 3$ and $\delta(S)$ is minimal, and the latter is equivalent to $G[S]$ and $G[V(G) \setminus S]$ being connected. The next theorem characterizes when asymmetric cut inequalities define facets in this setting.

\begin{theorem}
    \label{facet asymmetric cut}
    Let $G$ be $2$-edge-connected and $\emptyset \subsetneq S \subsetneq V(G)$ such that $\delta(S)$ is a minimal cut set with $\vert \delta(S) \vert \ge 3$. For $e \in \delta(S)$ the asymmetric cut inequality
    \begin{align*}
        x_e-x(\delta(S)\backslash e) \le 0
    \end{align*}
    defines a facet of $\TECSP(G)$ if and only if every bridge in $G[S]$ and $G[V(G) \backslash S]$ lies in a coparallel class together with an edge of $\delta(S)$.
\end{theorem}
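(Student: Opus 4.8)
The plan is to prove the two implications separately. Throughout write $\delta(S)=\{e,e_1,\dots,e_k\}$ with $k\ge 2$, and let $\mathcal F\subseteq\TECSP(G)$ be the face defined by $x_e=x(\delta(S)\setminus e)$. A vertex $\chi^H$ of $\TECSP(G)$ lies in $\mathcal F$ exactly when either $e\notin E(H)$ and $E(H)\cap\delta(S)=\emptyset$ (so $H$ sits inside $G[S]$ or inside $G[V(G)\setminus S]$), or $e\in E(H)$ and $E(H)\cap\delta(S)=\{e,e_i\}$ for some $i$; call these the \emph{inner} and \emph{crossing} vertices of $\mathcal F$, respectively. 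Everywhere I use that a $2$-edge-connected subgraph meets any cut in $0$ or at least $2$ edges.

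\emph{Necessity.} I argue contrapositively. Suppose, say, that $G[S]$ has a bridge $b$ that is not coparallel with any edge of $\delta(S)$; write $G[S]-b$ as two connected pieces on vertex sets $S_1,S_2$ and put $F_j:=\delta(S)\cap\delta_G(S_j)$, so that $\delta_G(S_j)=\{b\}\cup F_j$ and $\delta(S)=F_1\dot{\cup}F_2$. Since $\{b,e_i\}$ is a minimal $2$-cut of $G$ iff $F_1=\{e_i\}$ or $F_2=\{e_i\}$, the hypothesis on $b$ is exactly $|F_1|,|F_2|\ge 2$; assume w.l.o.g.\ $e\in F_1$. I then check that the asymmetric cut inequality of the cut $\delta_G(S_2)=\{b\}\cup F_2$ with distinguished edge $b$, namely $x_b-x(F_2)\le 0$, is tight on \emph{every} vertex of $\mathcal F$: on inner vertices both sides vanish; on a crossing vertex with $E(H)\cap\delta(S)=\{e,e_j\}$ one has $b\notin E(H)$ and $E(H)\cap F_2=\emptyset$ when $e_j\in F_1$ (otherwise $b$ would be a bridge of $H$), while $b\in E(H)$ and $E(H)\cap F_2=\{e_j\}$ when $e_j\in F_2$ (otherwise $e_j$ would be a bridge of $H$); in both cases $\chi_b^H-\chi^H(F_2)=0$. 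Thus $\mathcal F$ lies on both hyperplanes $\{x_e-x(\delta(S)\setminus e)=0\}$ and $\{x_b-x(F_2)=0\}$. The difference of the two defining functionals equals $x_e-x(F_1\setminus e)-x_b$, and summing its coefficients over the coparallel class of $b$ — a class disjoint from $\delta(S)$ and not containing $e$ — gives $-1\ne 0$; hence no scalar multiple of one functional agrees with the other modulo the coparallel equalities cutting out $\aff(\TECSP(G))$, i.e.\ the two hyperplanes are distinct in $\aff(\TECSP(G))$. By \Cref{dimension} this forces $\dim\mathcal F\le|\CP(G)|-2$, so $\mathcal F$ is not a facet.

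\emph{Sufficiency.} I would induct on $|V(G)|$, assuming the bridge condition. First record the shape of $\CP(G)$: because $|\delta(S)|\ge 3$, no two edges of $\delta(S)$ are coparallel, so the $k+1$ edges of $\delta(S)$ lie in $k+1$ pairwise distinct coparallel classes $D_0\ni e,\,D_1\ni e_1,\dots,D_k\ni e_k$; by the bridge hypothesis every bridge of $G[S]$ and of $G[V(G)\setminus S]$ also lies in one of these $D_i$, and all remaining classes lie entirely inside $G[S]$ (collect them as $\mathcal A$) or entirely inside $G[V(G)\setminus S]$ (collect them as $\mathcal B$) and consist of non-bridge edges; hence $|\CP(G)|=|\mathcal A|+|\mathcal B|+k+1$. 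If $|S|\ge 2$ and $|V(G)\setminus S|\ge 2$, form the $2$-edge-connected graphs $G_S:=G/(V(G)\setminus S)$ and $G_{\bar S}:=G/S$; these have fewer vertices, $\delta(S)$ is the set of edges at the contracted vertex in each, an uncrossing argument shows the bridge hypothesis is inherited, and the same analysis gives $|\CP(G_S)|=|\mathcal A|+k+1$ and $|\CP(G_{\bar S})|=|\mathcal B|+k+1$ (crucially $\CP(G_S)$ restricted to $E(G[S])$ equals $\mathcal A$, \emph{not} $\CP(G[S])$, which can be strictly coarser). By the inductive hypothesis the asymmetric cut inequality defines facets $\mathcal F_S\subseteq\TECSP(G_S)$ and $\mathcal F_{\bar S}\subseteq\TECSP(G_{\bar S})$, each therefore carrying its full number of affinely independent vertices. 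Each vertex of $\mathcal F_S$ lifts to a vertex of $\mathcal F$ by un-contracting the new vertex into a fixed spanning tree $T$ of $G[V(G)\setminus S]$ (un-contracting a vertex into a connected path attached at its two ends preserves $2$-edge-connectivity); symmetrically for $\mathcal F_{\bar S}$ with a fixed spanning tree $T'$ of $G[S]$. Restriction to $E(G_S)$ (resp.\ $E(G_{\bar S})$) returns the original vertices, so the two lifted families are themselves affinely independent; moreover the lifts from $\mathcal F_S$ are supported in $E(G_S)\cup E(T)$, those from $\mathcal F_{\bar S}$ in $E(G_{\bar S})\cup E(T')$, and the only directions common to the two spans are the $k$ directions visible in the $\delta(S)$-coordinates, since on both $\mathcal F_S$ and $\mathcal F_{\bar S}$ these coordinates fill exactly the $k$-dimensional space $\{x_e=\sum_i x_{e_i}\}$ (distinct $\delta(S)$-edges lying in distinct coparallel classes). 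Counting, the union of the two lifted families has affine dimension $(|\mathcal A|+k)+(|\mathcal B|+k)-k=|\CP(G)|-1$, so $\mathcal F$ is a facet. When instead $|V(G)\setminus S|=1$ (or symmetrically $|S|=1$) the contraction $G_S$ equals $G$, so I build $|\CP(G)|$ affinely independent vertices of $\mathcal F$ directly: the zero vertex; for each $i$ a crossing subgraph using exactly $\{e,e_i\}$ (one exists because $G[S]$ is connected, so $e$ and $e_i$ lie on a common cycle through the single outside vertex); and for each $A\in\mathcal A$ a further $2$-edge-connected subgraph activating $A$ relative to the others — of inner type when $A$ is already a coparallel class of $G-w$, of crossing type otherwise — choosing an order on the classes so that the resulting incidence matrix is triangular with nonzero diagonal, exactly as in the proof of \Cref{boxineq1}.

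\emph{Main obstacle.} The hard part is the sufficiency direction, and within it two pieces of bookkeeping. First, one must track carefully how $\CP(G)$ splits across $G[S]$, $\delta(S)$ and $G[V(G)\setminus S]$; the subtlety is that one has to pass to the contractions $G_S,G_{\bar S}$ rather than to the induced subgraphs, because a non-bridge edge of $G[S]$ can lie in a coparallel class of $G[S]$ properly larger than its class in $G$, so $2$-edge-connected subgraphs contained in $G[S]$ do not by themselves realize all $|\mathcal A|$ interior directions of $\mathcal F$. Second, and most delicately, one must assemble the affinely independent vertices coming from $G_S$ and from $G_{\bar S}$ into one affinely independent family in $\mathcal F$ of size $|\CP(G)|$ — equivalently, show that the two lifted spans overlap in exactly the $k$-dimensional ``boundary'' space — and it is precisely here, through the distinctness of the $k+1$ coparallel classes meeting $\delta(S)$, that the hypothesis $|\delta(S)|\ge 3$ gets used.
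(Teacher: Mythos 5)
Your necessity direction is sound and is essentially the paper's argument in a different guise: the paper writes $x_e-x(\delta(S)\setminus\{e\})\le 0$ as the sum of the two asymmetric cut inequalities for $\delta(V(G_1))$ and $\delta(V(G_2))$ arising from the bridge, while you show the face also lies on the second of these hyperplanes and then separate the two functionals modulo the coparallel relations. (Small fix needed: you only compute the class-sums for the \emph{difference} of the two functionals; to conclude that no scalar multiple works you should note that for $\ell_1-\lambda\ell_2$ the sum over the class of $b$ is $-\lambda$ and, for $\lambda=0$, the sum of $\ell_1$ over the class of $e$ is $1$ — both nonzero. This is a one-line repair.)

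The sufficiency direction is where you diverge from the paper (which reduces to the facet theorem for $\ESP(G)$, \Cref{facetsESP}, by decomposing each Eulerian subgraph on that face into cycles lying on $\F_{\TECSP(G)}$), and here your argument has genuine gaps. First, note that after a single contraction the cut in $G_S$ is the star of the contracted vertex, so your induction immediately reduces everything to the case $\vert V(G)\setminus S\vert=1$; that case is therefore the heart of the proof, and you only sketch it. The appeal ``exactly as in the proof of \Cref{boxineq1}'' does not go through: that proof uses the subgraphs $G$ and the components of $G-C_j$ containing $e$, which need not lie on $\F$, and more importantly $G-w$ may contain bridges, so for $A\in\mathcal A$ the components of $(G-w)-A$ need not be $2$-edge-connected (\Cref{G-C} does not apply) and the ``inner/crossing type, triangular matrix'' construction has to be built from scratch — this is precisely the kind of detailed work the paper spends its long Claim~4 on for the connectivity cut inequalities. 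Second, your gluing step asserts that the only directions common to the two lifted spans are the $k$ directions seen in the $\delta(S)$-coordinates, but the supports of the two families also overlap in $E(T)\cup E(T')$, so support considerations alone do not give this; the bound $\dim(D_1\cap D_2)\le k$ is true, but the argument you need is that every lift satisfies the \emph{linear} relations ``tree-coordinates are a fixed linear function of the $\delta(S)$-coordinates'' and vanishes off its own side, which you neither state nor prove. Two smaller points: the lift must insert only the $T$-path between the two attachment vertices (inserting the whole spanning tree creates bridges), and the contractions $G_S$, $G_{\bar S}$ are multigraphs, so the inductive statement must be formulated for multigraphs (or handled via the paper's subdivision remark) to keep the induction measure honest. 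As it stands, the proposal is a plausible plan whose crucial case is asserted rather than proved.
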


Note that the condition in the theorem above is equivalent to the property that if $\delta(S)$ has a chord, i.e., there exist cuts $F_1,F_2 \subseteq E(G)$, such that $F_1 \triangle F_2=\delta(S)$ and $F_1 \cap F_2=\{h\}$ for an $h \in E(G)$, then $F_1$ or $F_2$ has cardinality $2$, see below.

\begin{proof}
Suppose first that $G[S]$ contains a bridge $f \in E(G[S])$ that is not contained in a coparallel class with any edge of $\delta(S)$. Since $\delta(S)$ is a minimal cut set, we know that $G[S]$ and $G[V(G) \setminus S]$ are connected. Then the graph $G[S]-f$ has two connected components $G_1,G_2$. For $i=1,2$, let $E_i$ be the edges of $\delta(S)$ incident to $G_i$. Since $f$ is not in a $2$-cut with any edge of $\delta(S)$, we have $\vert E_1\vert,\vert E_2\vert \ge 2$. Moreover, we have $\delta(V(G_1))=E_1 \cup \{f\}$ and $\delta(V(G_2))=E_2 \cup \{f\}$ and without loss of generality we can assume that $e \in E_1$. An example for the described situation is shown in \Cref{bridge}. Then we have that
    \begin{align*}
        x_e-x(\delta(S)\setminus\{e\}) \le 0
    \intertext{is the sum of the two asymmetric cut inequalities}
        x_e-x(\delta(V(G_1))\setminus\{e\}) \le 0,\\ x_f-x(\delta(V(G_2))\setminus\{f\}) \le 0.
    \end{align*}
    Since $\vert E_1\vert,\vert E_2\vert \ge 2$, both of the two latter asymmetric cut inequalities define proper faces of $\TECSP(G)$ and it is clear that these two faces are different. This implies that the first asymmetric cut inequality is not a facet.

    We now show that in all other cases, the asymmetric cut inequality defines a facet of $\TECSP(G)$. The asymmetric cut inequality is actually a special case of inequality \textit{(d)} in \Cref{facetsESP} and therefore defines a facet of $\ESP(G)$ due to the remark directly below \Cref{facet asymmetric cut}. Let $\F_{\ESP(G)}$ and $\F_{\TECSP(G)}$ denote the face defined by the asymmetric cut inequality of $\ESP(G)$ and of $\TECSP(G)$, respectively. We will show that $\aff(\F_{\ESP(G)}) \subseteq \aff(\F_{\TECSP(G)})$, which implies that $\F_{\TECSP(G)}$ is a facet of $\TECSP(G)$, since $\F_{\ESP(G)}$ is a facet of $\ESP(G)$, and both polytopes have the same dimension due to \cite[Theorem 4.1]{BARAHONA198640} and \Cref{dimension}. Let $H \subseteq G$ be a subgraph of $G$ such that $\chi^H$ is a vertex of $\F_{\ESP(G)}$. This implies that $H$ either contains no edges of $\delta(S)$ or $H$ contains the edge $e \in \delta(S)$ and exactly one other edge $f \in \delta(S)\setminus \{e\}$. Since the connected components of $H$ are Eulerian, $H$ is the edge-disjoint union of cycles $C_1,\ldots,C_k \subseteq G$. If $E(H) \cap \delta(S)=\emptyset$, it is clear that $\chi^{C_i} \in \F_{\TECSP(G)}$ for all $i \in [k]$. If $e \in E(H)$, there is exactly one cycle $C_j$ that contains $e$ for some $j \in [k]$. Then $C_j$ also contains the edge $f$, because $E(C_j) \cap \delta(S)$ has even cardinality, and again we have $\chi^{C_i} \in \F_{\TECSP(G)}$ for all $i \in [k]$, since all other cycles do not contain any edge of $\delta(S)$. Since the all-zeros vector $\mathbf{0}$ is contained in $\F_{\TECSP(G)}$, $\aff(\F_{\TECSP(G)})$ is a linear subspace and also $\chi^H=\sum_{i=1}^k \chi^{C_i} \in \aff(\F_{\TECSP(G)})$.

\end{proof}

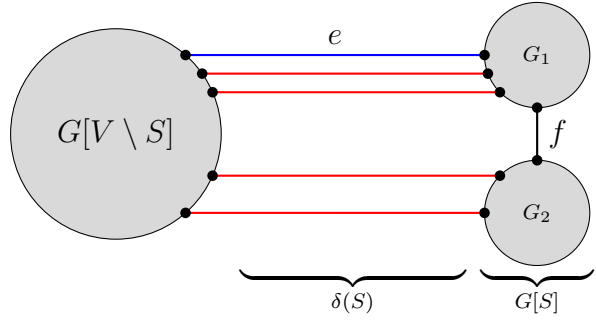
\begin{figure}
    \centering
    \begin{tikzpicture}[scale=1.4]
        \draw[fill=gray!30] (-2, 0) circle (1) node {$G[V\setminus S]$};
        \draw[fill=gray!30] (2, 0.75) circle (0.5) node {\tiny$G_1$};
        \draw[fill=gray!30] (2, -0.75) circle (0.5) node {\tiny$G_2$};

        \coordinate (v1) at ($(-2,0)+(48.59:1)$);
        \coordinate (v2) at ($(-2,0)+(34.98:1)$);
        \coordinate (v3) at ($(-2,0)+(23.36:1)$);
        \coordinate (v4) at ($(-2,0)+(-23.36:1)$);
        \coordinate (v5) at ($(-2,0)+(-48.59:1)$);

        \coordinate (w1) at ($(2,0.75)+(180:0.5)$);
        \coordinate (w2) at ($(2,0.75)+(200.7:0.5)$);
        \coordinate (w3) at ($(2,0.75)+(225:0.5)$);
        \coordinate (w4) at ($(2,-0.75)+(135:0.5)$);
        \coordinate (w5) at ($(2,-0.75)+(180:0.5)$);
        
        \draw[blue, thick] (v1) -- (w1) node[midway, above, black] {$e$};
        \fill (v1) circle (0.05);
        \fill (w1) circle (0.05);

        \foreach \i in {2,...,5}{
            \draw[red, thick] (v\i)--(w\i);
            \fill (v\i) circle (0.05);
            \fill (w\i) circle (0.05);
        }

        \draw[thick] (2,0.25) -- (2,-0.25) node[midway, right, black] {$f$};
        \fill (2,0.25) circle (0.05);
        \fill (2,-0.25) circle (0.05);

        \node at (2,-1.5) {$\underbrace{\hspace{1.5cm}}_{G[S]}$};
        \node[] at (0.25,-1.5) {$\underbrace{\hspace{3cm}}_{\delta(S)}$};
        \end{tikzpicture}
    \caption{Example for $G[S]$ containing a bridge $f$ that is not contained in a coparallel class with an edge of $\delta(S)$.}
    \label{bridge}
\end{figure}

\subsection{The connectivity cut inequalities}

We will now consider what we call the \emph{connectivity cut inequalities} and provide some motivation for these. So far, the asymmetric cut inequalities ensure that for a $2$-edge-connected graph $G$, no incidence vector of a subgraph $H \subseteq G$ that contains a bridge is feasible. However, a subgraph $H \subseteq G$ might not be $2$-edge-connected simply because it is not connected. As an example, consider the graph $G$ in \Cref{Example CCI}. The point $y \in \R^{E(G)}$ with $y_{f_1}=y_{f_2}=0$ and $y_e=1$ for all $e \in E(G)\setminus \{f_1,f_2\}$ satisfies all asymmetric cut inequalities since the subgraph $H \subseteq G$ with vertex set $V(H)=V(G)$ and edge set $E(H)=\mathrm{supp}(y)$ does not contain a bridge. However, $y$ does not lie in $\TECSP(G)$: Let $S \subseteq V(G)$ be such that $\delta(S)=\{f_1,f_2\}$. If a $2$-edge-connected subgraph $H \subseteq G$ contains an edge $e_1 \in E(G[S])$ and an edge $e_2 \in E(G[V(G)\setminus S])$, then $H$ has to contain at least two edges from the cut $\delta(S)$ in order to be $2$-edge-connected. This implies that the inequality
\begin{align*}
    2x_{e_1}-x_{f_1}-x_{f_2}+2x_{e_2} \le 2
\end{align*}
defines a supporting hyperplane for $\TECSP(G)$ and cuts off the otherwise feasible point~$y$. More generally, given a $2$-edge-connected graph $G$, and a set $\emptyset \subsetneq S \subsetneq V(G)$ with ${E(G[S])\neq \emptyset}$, and $E(G[V(G) \setminus S])\neq \emptyset$, $e_1 \in E(G[S])$ and $e_2 \in E(G[V(G) \setminus S])$, the inequality
\begin{align}
	\label{eq:CCI}
    2x_{e_1}-x(\delta(S))+2x_{e_2} \le 2
\end{align}
defines a supporting hyperplane of $\TECSP(G)$ by the arguments above. An inequality as in \eqref{eq:CCI} will be called a \emph{connectivity cut inequality}, since these inequalities ensure that only incidence vectors of connected subgraphs are feasible as we will see in \Cref{lattice points}. Before that, we study when such inequalities define facets.

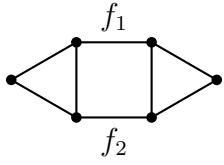
\begin{figure}
    \centering
    \begin{tikzpicture}
        \draw[thick] (0,0) -- (1,0);
        \draw[thick] (0,1) -- (1,1);
        \draw[thick] (0,1) -- (0,0);
        \draw[thick] (1,1) -- (1,0);
        \draw[thick] (0,0) -- (-0.8660254,0.5);
        \draw[thick] (0,1) -- (-0.8660254,0.5);
        \draw[thick] (1,0) -- (1.8660254,0.5);
        \draw[thick] (1,1) -- (1.8660254,0.5);
        
        \fill (0,0) circle (2pt);
        \fill (0,1) circle (2pt);
        \fill (-0.8660254,0.5) circle (2pt);
        \fill (1,1) circle (2pt);
        \fill (1,0) circle (2pt);
        \fill (1.8660254,0.5) circle (2pt);
        \coordinate[label=above:$f_1$] (A) at (0.5,1);
        \coordinate[label=below:$f_2$] (B) at (0.5,0);
    \end{tikzpicture}
    \caption{The graph used in the initial motivating example for connectivity cut inequalitites.}
    \label{Example CCI}
\end{figure}

Without loss of generality, we assume that the cut $\delta(S)$ is an inclusion-wise minimal cut. We will show that otherwise there exists a $\emptyset \subsetneq S' \subsetneq V(G)$ such that $\delta(S') \subsetneq \delta(S)$, and $e_1 \in G[S']$ and $e_2 \in G[V(G) \setminus S']$. If such $S'$ exists, the connectivity cut inequality
\begin{align*}
    2x_{e_1}-\delta(S')+2x_{e_2} \le 2
\end{align*}
dominates the original connectivity cut inequality \eqref{eq:CCI}, and we can study the latter instead. Suppose $\delta(S)$ is not inclusion-wise minimal, which means that the graph $G-\delta(S)$ has at least three connected components $G_1,G_2$, and $G_3$. After possibly renaming, we may assume $e_1 \in E(G_1)$, and $e_2 \in E(G_2)$. We claim that $\delta(V(G_1)) \subsetneq \delta(S)$ or $\delta(V(G_2)) \subsetneq \delta(S)$. Assume the contrary, namely $\delta(V(G_1))=\delta(S)=\delta(V(G_2))$. But this implies that each edge $f \in \delta(S)$ contains exactly one vertex of $V(G_1)$ and one vertex of $V(G_2)$, which means that there are no edges in $G$ that contain exactly one vertex of $V(G_3)$. This is not possible, since we only consider connected graphs $G$. Hence, we can choose $S'=V(G_1)$ or $S'=V(G)\setminus V(G_2)$, depending on which of $\delta(V(G_1))$ and $\delta(V(G_2))=\delta(V(G)\setminus V(G_2))$ is a proper subset of $\delta(S)$ (it may happen that both choices are valid). By construction, it is also clear that $e_1 \in G[S']$ and $e_2 \in G[V(G) \setminus S']$, and hence we find a dominating connectivity cut inequality, whenever $\delta(S)$ is not an inclusion-wise minimal cut.

We need to introduce further notation. With the notation above and assuming $\delta(S)$ is inclusion-wise minimal, for $W \in \{S,V(G) \setminus S\}$ and a bridge $f \notin \{e_1,e_2\}$ of $G[W]$, let $\delta(W,f) \subseteq \delta(S)$ denote the set of edges of $\delta(S)$ that have one end in the connected component of $G[W]-f$ containing neither $e_1$ nor $e_2$. With this we can provide the desired characterization.

\begin{theorem}
	\label{CCI Thm}
    Let $G$ be a $2$-edge-connected graph with vertex set $V$, let $\emptyset \subsetneq S \subsetneq V$, such that $\delta(S)$ is an inclusion-wise minimal cut with $e_1 \in E(G[S])$ and $e_2 \in E(G[V\backslash S])$. Then the connectivity cut inequality
    \begin{align}
        2x_{e_1}-x(\delta(S))+2x_{e_2} \le 2
        \label{CCI}
    \end{align}
    defines a facet of $\TECSP(G)$ if and only if one of the following two conditions holds:
    \begin{enumerate}[(i)]
    	\item $e_1$ or $e_2$ is a bridge in $G[S]$ or $G[V \setminus S]$, and $\vert \delta(S) \vert=2$;
    	\item neither $e_1$ nor $e_2$ is a bridge in $G[S]$ resp. $G[V \setminus S]$, and if there is a bridge $f$ in $G[W]$ for $W \in \{S,V \setminus S\}$, then $\vert \delta(W,f)\vert=1$.
    \end{enumerate}
\end{theorem}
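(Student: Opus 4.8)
The plan is to start by identifying the face $\F$ of $\TECSP(G)$ cut out by~\eqref{CCI}. A case distinction on $\chi^H_{e_1},\chi^H_{e_2}\in\{0,1\}$ shows that a $2$-edge-connected $H\subseteq G$ satisfies $\chi^H\in\F$ if and only if (Type~1) $e_1,e_2\in E(H)$ and $\chi^H(\delta(S))=2$, or (Type~2) $H\subseteq G[S]$ and $e_1\in E(H)$, or (Type~3) $H\subseteq G[V\setminus S]$ and $e_2\in E(H)$; in particular $\mathbf 0\notin\F$. For the ``only if'' direction I will use the standard mechanism that if the left- and right-hand sides of~\eqref{CCI} are the sum of those of two valid inequalities whose faces are proper and distinct, then $\F$ is the intersection of these two proper faces and hence not a facet. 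For the ``if'' direction I will, via \Cref{dimension}, show $\dim\F=|\CP(G)|-1$. Since~\eqref{CCI} is symmetric under interchanging $(e_1,S)$ and $(e_2,V\setminus S)$, I use ``without loss of generality'' freely.

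\emph{``Only if''.} If neither (i) nor (ii) holds then, by elementary logic, one of the following occurs. \emph{Case~A:} one of $e_1,e_2$, say $e_1$, is a bridge of $G[S]$ and $|\delta(S)|\ge 3$. Writing $G[S]-e_1=G_1\dot{\cup}G_2$ and letting $u_1,u_2$ be the endpoints of $e_1$, any $2$-edge-connected $H\ni e_1$ contains, besides $e_1$, a path from $u_1$ to $u_2$ avoiding $e_1$, which must use an edge of $\delta(S)\cap\delta(V(G_1))$ and an edge of $\delta(S)\cap\delta(V(G_2))$; these being disjoint, $H$ uses at least two edges of $\delta(S)$, so $2x_{e_1}-x(\delta(S))\le 0$ is valid. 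Its face contains $\mathbf 0$ and is proper ($\chi^G$ violates equality as $|\delta(S)|\ge 3$), and adding the valid $2x_{e_2}\le 2$ (whose face misses $\mathbf 0$) yields~\eqref{CCI}; the two faces differ, so $\F$ is not a facet. \emph{Case~B:} neither $e_1$ nor $e_2$ is a bridge of its side, but some $G[W]$, say $W=S$, has a bridge $f\notin\{e_1,e_2\}$ with $|\delta(W,f)|\ge 2$; let $A$ be the component of $G[S]-f$ containing $e_1$ and $B$ the other, so $\delta(W,f)=\delta(S)\cap\delta(V(B))$ and $\delta(V(B))=\{f\}\cup\delta(W,f)$ has size $\ge 3$. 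Since $f$ is a bridge of $G[S]$, $x_f-x(\delta(W,f))\le 0$ is a valid asymmetric cut inequality for $\delta(V(B))$ with proper face (a cut of size $\ge 3$, see \Cref{cond1}) containing $\mathbf 0$; subtracting it from~\eqref{CCI} gives the connectivity cut inequality $2x_{e_1}-x(\delta(V(A)))+2x_{e_2}\le 2$ for $\delta(V(A))=\{f\}\cup(\delta(S)\setminus\delta(W,f))$, valid since $e_1\in E(G[V(A)])$ and $e_2\in E(G[V\setminus V(A)])$, with proper face missing $\mathbf 0$; again the two faces differ, so $\F$ is not a facet.

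\emph{``If'', case (i).} Say $e_1$ is a bridge of $G[S]$ and $\delta(S)=\{f_1,f_2\}$. With $G[S]-e_1=G_1\dot{\cup}G_2$, $2$-edge-connectivity of $G$ forces each $G_i$ to be incident to $\delta(S)$, so after relabeling $\delta(V(G_1))=\{e_1,f_1\}$ and $\delta(V(G_2))=\{e_1,f_2\}$; these being minimal $2$-cuts, $e_1,f_1,f_2$ lie in a common coparallel class. Hence $x_{e_1}=x_{f_1}=x_{f_2}$ on $\aff(\TECSP(G))$, so on $\TECSP(G)$ inequality~\eqref{CCI} reads $2x_{e_2}\le 2$; thus $\F=\{x\in\TECSP(G):x_{e_2}=1\}$ is the face of the box inequality $x_{e_2}\le 1$, which is a facet by \Cref{boxineq1}. (If instead $e_2$ is the bridge, one gets the facet $x_{e_1}\le 1$.)

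\emph{``If'', case (ii).} I would first show that in case (ii) neither $e_1$ nor $e_2$ is coparallel with an edge of $\delta(S)$: if $e_1$ were coparallel with some $h\in\delta(S)$, analyzing how the two sides of the minimal $2$-cut $\{e_1,h\}$ meet $S$ and $V\setminus S$ and using connectedness of $G[S]$ and $G[V\setminus S]$ would force $e_1$ to be a bridge of $G[S]$, contradicting (ii); likewise for $e_2$. Consequently the coparallel class $C_1^\ast\ni e_1$ lies in $E(G[S])$ and $C_2^\ast\ni e_2$ lies in $E(G[V\setminus S])$, so, restricted to $\aff(\TECSP(G))$, the functional of~\eqref{CCI} has coefficient $2$ on $C_1^\ast$ and $C_2^\ast$, coefficient $-|C\cap\delta(S)|$ on each coparallel class $C$ meeting $\delta(S)$, and $0$ on all other classes; in particular it is nonzero, so $\F$ is a facet precisely when it contains $|\CP(G)|$ affinely independent vertices, which it remains to exhibit. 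I would construct them, splitting on whether $|\delta(S)|=2$ or $|\delta(S)|\ge 3$: for each coparallel class contained in $E(G[S])$ (resp. $E(G[V\setminus S])$) take two Type~2 (resp. Type~3) vertices differing only in that class, obtained from $G[S]$ (resp. $G[V\setminus S]$) by deleting coparallel classes, exactly as in the proof of \Cref{boxineq1}; for each coparallel class meeting $\delta(S)$ use a Type~1 vertex given by a $2$-edge-connected subgraph of $G$ crossing $\delta(S)$ in a prescribed pair of cut edges — typically a ``weaving'' cycle rather than a clean union of two $2$-edge-connected halves. The point where hypothesis (ii) is used is exactly here: $|\delta(W,f)|=1$ for every bridge $f$ of $G[W]$ is what lets one reach an arbitrary single cut edge by a $2$-edge-connected subgraph that crosses $\delta(S)$ only twice. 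The main obstacle I anticipate is this last step: organizing all the chosen vertices so that their coparallel-class projections (as in the proof of \Cref{boxineq1}) form a triangular matrix after ordering the coparallel classes compatibly with the partial order on the components of $G-C$, and in particular keeping track of how the coparallel classes of $G$ sit inside the blocks of $G[S]$ and $G[V\setminus S]$.
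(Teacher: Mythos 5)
Your characterization of the face, your ``only if'' direction, and case (i) are sound. Writing \eqref{CCI} as a sum of two valid inequalities with distinct proper faces (in Case A the valid inequality $2x_{e_1}-x(\delta(S))\le 0$ plus $2x_{e_2}\le 2$; in Case B the asymmetric cut inequality for $\delta(V(B))$ plus the connectivity cut inequality for $\delta(V(A))$) is a clean variant of what the paper does in its Claims 2 and 3, and your case (i) argument (the two cut edges and the bridge form one coparallel class, so the inequality collapses to $x_{e_2}\le 1$ and \Cref{boxineq1} applies) is exactly the paper's Claim 1. One small repair: properness of the asymmetric-cut face in Case B does not follow from \Cref{cond1} (which treats only $2$-cuts); just note that $\chi^G$ violates equality because $\vert\delta(S,f)\vert\ge 2$.

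The genuine gap is case (ii) of the ``if'' direction, which is the heart of the theorem and occupies most of the paper's proof (its Claim 4). You correctly reduce to exhibiting $\vert\CP(G)\vert$ affinely independent vertices of $\F$ (equivalently $\vert\CP(G)\vert-1$ independent difference vectors), but the construction is only sketched and you yourself flag the decisive step as an unresolved obstacle. Two concrete issues are left open. First, for coparallel classes inside $E(G[S])$ you propose pairs of Type~2 vertices ``as in the proof of \Cref{boxineq1}'' applied to $G[S]$; but that argument counts $\CP(G[S])$, not the coparallel classes of $G$ contained in $E(G[S])$: bridges of $G[S]$ lie in $G$-coparallel classes together with edges of $\delta(S)$, and a coparallel class of a component can be a union of several coparallel classes of $G$ (cf.\ \Cref{cpc of G-C obs}), so the two counts differ and extra vectors must be produced — this is precisely where the paper works hardest (its Case (3), with the vectors $\chi^{H_{s+1}}-\chi^{H_s}$ along the cycle structure of a merged class). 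Second, the Type~1 ``weaving'' vertices that cross $\delta(S)$ in a prescribed pair of cut edges are never constructed; the paper obtains them only after first proving the structural consequence of the hypothesis $\vert\delta(W,f)\vert=1$, namely that the bridges of $G[S]$ and $G[V\setminus S]$ together with $\delta(S)$ organize into $k=\vert\delta(S)\vert$ internally disjoint paths between the components containing $e_1$ and $e_2$, each carrying exactly one cut edge and each forming a coparallel class (with the appropriate modification for $k=2$), and then takes unions $G(P_\ell)\cup G(P_{\ell'})$. Without this structural analysis and the explicit three-case construction of linearly independent vectors, the facet claim in case (ii) — and hence the theorem — is not established by your proposal.
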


We remark that in case \textit{(i)}, the considered connectivity cut inequality is equivalent to the box inequality $x_{e_2} \le 1$ or $x_{e_1} \le 1$ depending on whether $e_1$ is a bridge in $G[S]$ or $e_2$ is a bridge in $G[V \setminus S]$, respectively. Furthermore, if $G[S]$ and $G[V \setminus S]$ are both $2$-edge-connected, then there are no bridges and the considered connectivity cut always defines a facet.

\begin{proof}
    Let $\F \subseteq \TECSP(G)$ be the face of $\TECSP(G)$ defined by the considered connectivity cut inequality. We provide the proof in separate claims.
    
    \medskip
    
    \noindent \textbf{Claim 1}: If $e_1$ is a bridge in $G[S]$ or $e_2$ is a bridge in $G[V \setminus S]$ and $\vert \delta(S) \vert =2$, the considered connectivity cut inequality defines a facet of $\TECSP(G)$.
    
    \medskip
    
    \noindent \textit{Proof.} Without loss of generality we may assume that $e_1$ is a bridge in $G[S]$, since the condition is symmetric for $e_1$ and $e_2$. If $\vert \delta(S) \vert = 2$, the two edges of $\delta(S)$ and $e_1$ belong to the same coparallel class. The connectivity cut inequality thus simplifies to $x_{e_2} \le 1$, and therefore defines a facet by \Cref{boxineq1}. \hfill $\blacksquare$
    
    \medskip
    
    \noindent \textbf{Claim 2}: If $e_1$ is a bridge in $G[S]$ or $e_2$ is a bridge in $G[V \setminus S]$ and $\vert \delta(S) \vert \ge 3$, the considered connectivity cut inequality does not define a facet of $\TECSP(G)$.
    
    \medskip
    
    \noindent \textit{Proof.} Without loss of generality we may again assume that $e_1$ is a bridge in $G[S]$. Let $H \subseteq G$ be a $2$-edge-connected subgraph of $G$ such that $\chi^H \in \F$. This is only possible if $H$ contains at least one of $e_1$ and $e_2$. If $e_2 \in E(H)$, it is clear that $\chi^H \in \F_{e_2}$, where $\F_{e_2} \subseteq \TECSP(G)$ denotes the facet defined by $x_{e_2} \le 1$. Suppose that $H$ contains the edge $e_1$. In $G[S]$ the edge $e_1$ is a bridge, but, since $H$ is $2$-edge-connected, the edge $e_1$ has to be contained in a cycle in~$H$. Therefore, $E(H)$ intersects $\delta(S)$ non-trivially. Since $\chi^H \in \F$, $H$ also contains the edge $e_2$ and we have $\F \subseteq \F_{e_2}$. If $\vert \delta(S) \vert \ge 3$, $\chi^G \in \F_{e_2}$ but $\chi^G \notin \F$ and hence, $\F \subsetneq \F_{e_2}$ which shows that $\F$ is not a facet, since $\F_{e_2}$ is a proper face of $\TECSP(G)$ by \Cref{boxineq1}. \hfill $\blacksquare$
    
        \begin{figure}
    	\centering
    	\begin{tikzpicture}[scale=1.4]
    	\draw[fill=gray!30] (-2, 0) circle (1) node {\small$e_2$};
    	\draw[fill=gray!30] (2, 0.75) circle (0.5) node {\tiny$e_1$};
    	\draw[fill=gray!30] (2, -0.75) circle (0.5) node {\tiny$G_2$};
    	\draw (2,0.5) -- (2.8,0.5);
    	\node (K1) at (3,0.5) {\tiny $G_1$};
    	
    	\coordinate (1) at (-1.8,0.15);
    	\coordinate (2) at (-2.2,0.15);
    	\draw (1) -- (2);
    	\fill (1) circle (1pt);
    	\fill (2) circle (1pt);

    	\draw[thick] (2,0.25) -- (2,-0.25) node[midway, right, black] {$f$};
    	\fill (2,0.25) circle (0.05);
    	\fill (2,-0.25) circle (0.05);

        \coordinate (v1) at ($(-2,0)+(48.59:1)$);
        \coordinate (v2) at ($(-2,0)+(34.98:1)$);
        \coordinate (v3) at ($(-2,0)+(23.36:1)$);
        \coordinate (v4) at ($(-2,0)+(-23.36:1)$);
        \coordinate (v5) at ($(-2,0)+(-48.59:1)$);

        \coordinate (w1) at ($(2,0.75)+(180:0.5)$);
        \coordinate (w2) at ($(2,0.75)+(200.7:0.5)$);
        \coordinate (w3) at ($(2,0.75)+(225:0.5)$);
        \coordinate (w4) at ($(2,-0.75)+(135:0.5)$);
        \coordinate (w5) at ($(2,-0.75)+(180:0.5)$);

        \foreach \i in {1,2,3}{
            \draw[red, thick] (v\i)--(w\i);
            \fill (v\i) circle (0.05);
            \fill (w\i) circle (0.05);
        }

        \foreach \i in {4,5}{
            \draw[blue, thick] (v\i)--(w\i);
            \fill (v\i) circle (0.05);
            \fill (w\i) circle (0.05);
        }

    	\coordinate (1) at (1.8,0.9);
    	\coordinate (2) at (2.2,0.9);
    	\draw (1) -- (2);
    	\fill (1) circle (1pt);
    	\fill (2) circle (1pt);
    	
    	\node at (2,-1.5) {$\underbrace{\hspace{1.5cm}}_{G[S]}$};\node[] at (0.25,-1.5) {$\underbrace{\hspace{3cm}}_{\delta(S)}$};
    	\node at (-2,-1.5) {$\underbrace{\hspace{2.5cm}}_{G[V\setminus S]}$};
    	\end{tikzpicture}
    	\caption{Example for $G[S]$ containing a bridge $f$ with $\vert \delta(S,f) \vert=2$. $\delta(S,f)$ is depicted in blue.}
    	\label{bridge2}
    \end{figure}
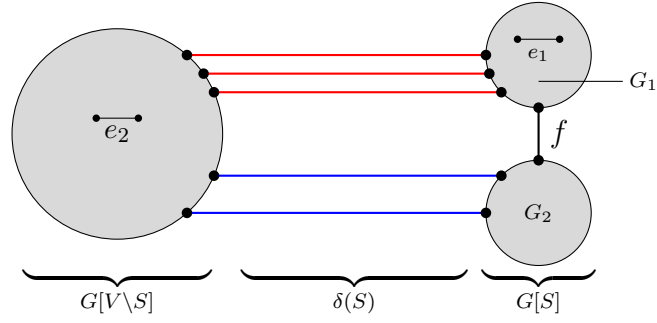

	\medskip
	
	\noindent \textbf{Claim 3}: If neither $e_1$ nor $e_2$ is a bridge in $G[W]$ for $W \in \{S,V \setminus S\}$, and there is a bridge $f$ of $G[W]$, such that $\vert \delta(W,f) \vert \ge 2$, then the considered connectivity cut inequality does not define a facet. 
	
	\medskip
	
	\noindent \textit{Proof.} Without loss of generality we may assume that $f$ is a bridge of $G[S]$. Let $G_1$ and $G_2$ denote the connected component of $G[S]-f$, where $G_1$ contains $e_1$. This situation is depicted in \Cref{bridge2}. We have $\delta(V(G_2))=\{f\} \cup \delta(S,f)$. Suppose $H \subseteq G$ is a $2$-edge-connected subgraph such that $\chi^H \in \F$. Then, one of the following cases occurs:
    \begin{enumerate}
    	\item[$\bullet$] $H$ contains $e_1$ and $e_2$.
    		\begin{enumerate}[(a)]
    			\item If $f \in E(H)$, then $H$ contains exactly one edge of $\delta(S,f)$.
    			\item If $f \notin E(H)$, $H$ contains exactly two edges of $\delta(S) \setminus \delta(S,f)$ and, therefore, $H$ does not contain any edge of $\delta(S,f)$.
    		\end{enumerate}
    	\item[$\bullet$] $H$ contains exactly one edge of $\{e_1,e_2\}$. Then $H$ neither contains an edge of $\delta(S)$ nor the edge $f$.
    \end{enumerate}
     In all cases, $\chi^H$ attains equality in the asymmetric cut inequality
    \begin{align}
        \label{domCCI}
        x_f-x(\delta(S,f))=x_f-x(\delta(V(G_2))\setminus\{f\}) \le 0.
    \end{align}
    Let $\F' \subseteq \TECSP(G)$ denote the face defined by \eqref{domCCI}. In particular, $\F \subseteq \F'$. Moreover, since neither $e_1$ nor $e_2$ is a bridge in $G[S]$ resp. $G[V\setminus S]$, there exists a cycle $C \subseteq G$, that contains exactly one edge of $\delta(S,f)$, one edge of $\delta(S) \setminus \delta(S,f)$ and the edge $f$ but neither $e_1$ nor $e_2$. This means $\chi^C \in \F' \setminus \F$ and, therefore, $\F \subsetneq \F'$. The assumption $\vert \delta(S,f) \vert \ge 2$ implies that $\F'$ is a proper face of $\TECSP(G)$, since for example $\textbf{1}=\chi^G \notin \F'$. Hence, the connectivity cut inequality does not define a facet in this case. \hfill $\blacksquare$
    
    \medskip
    
    \noindent \textbf{Claim 4}: If neither $e_1$ nor $e_2$ is a bridge in $G[S]$ resp. $G[V\setminus S]$, and for all bridges $f$ of $G[W]$ for $W \in \{S,V\setminus S\}$ (if there are any), we have $\vert \delta(W,f) \vert=1$, then the considered connectivity cut inequality defines a facet. 
    
    \medskip

    \noindent \textit{Proof.} Let $\F \subseteq \TECSP(G)$ denote the face defined by the considered connectivity cut inequality, and let $U \subseteq \R^{E(G)}$ be the linear subspace with $\aff(\F)=y+U$ for $y \in \F$. We have to show that $\dim(U)=\vert \CP(G) \vert-1$. It is clear that $\F \subsetneq \TECSP(G)$, since $\textbf{0} \notin \F$, and, hence, $\dim(U) \ge \vert \CP(G) \vert -1$ implies that $\F$ is a facet. For this aim, we will give $\vert \CP(G) \vert -1$ many linearly independent vectors contained in $U$. First, we describe how such a considered cut looks like.
    
    An example for the following can be seen in \Cref{fig: CLaim 4}. Since $\delta(S)$ is an inclusion-wise minimal cut, the graph $G-\delta(S)$ consists of the two connected components $G^{(1)}\coloneqq G[S]$ and $G^{(2)} \coloneqq G[V\setminus S]$ with $e_i \in G^{(i)}$ for $i=1,2$. For $i=1,2$, let $B^{(i)} \subseteq E(G_i)$ denote the set of bridges in $G^{(i)}$; let $G^{(i)}_1,\ldots,G^{(i)}_{n_i}$ denote the connected components of $G^{(i)}-B^{(i)}$, named such that $e_i \in G^{(i)}_1$. We remark that all of these connected components are $2$-edge-connected (including the case that a connected component may just be an isolated vertex), since otherwise there would be more bridges not contained in any $B^{(i)}$. By contracting all connected components of $G^{(i)}-B^{(i)}$ to single vertices, we obtain a tree $T^{(i)}$ that has a vertex $v^{(i)}_j$ for each connected component $G^{(i)}_j$ and edge set $E(T^{(i)})=\{\{v^{(i)}_j,v^{(i)}_{j'}\} \mid \text{there is } b \in B^{(i)} \text{ connecting } G^{(i)}_j \text{ and } G^{(i)}_{j'}\}$. As we did for $B^{(i)}$, we can also interpret the edges of $\delta(S)$ as edges having one end in $V(T^{(1)})$ and the other one in $V(T^{(2)})$ depending on to which components of $G^{(i)}-B^{(i)}$ their incident vertices belong, and we will denote this set by $\overline{\delta(S)}$. Let $i=1,2$. Then, for every leaf $w$ of $T^{(i)}$ there has to be at least one edge $e \in \overline{\delta(S)}$ that is incident to $w$, since otherwise the graph $G$ would not be $2$-edge-connected. Furthermore, for every leaf $w \neq v^{(1)}_1,v^{(2)}_1$ of $T^{(i)}$, there has to be \emph{exactly} one edge $e \in \overline{\delta(S)}$ that is incident to $w$, since otherwise the edges $f \in B^{(i)}$ that correspond to the edges of $E(T^{(i)})$ contained in the unique path $P \subseteq T^{(i)}$ connecting $v^{(i)}_1$ and $w$ would not satisfy the condition $\vert \delta(W_i,f) \vert =1$, for $W_1=S$, and $W_2=V\setminus S$. This also implies that for every non-leaf vertex $w' \in V(T^{(i)})$, $w' \neq v^{(i)}_1$, there is no edge $e \in \overline{\delta(S)}$ incident to $w'$, since otherwise the edges $f \in B^{(i)}$ that correspond to the edges of $E(T^{(i)})$ contained in the unique path $P' \subseteq T^{(i)}$ between $v^{(i)}_1$ and $w'$ would not satisfy the condition $\vert \delta(W_i,f) \vert =1$. Analogously, there can be no vertex $w'' \in V(T^{(i)})\setminus \{v^{(i)}_1\}$ that has degree three or more in $T^{(i)}$. Hence, every vertex $v_i \in V(T^{(i)}) \setminus \{v^{(i)}_1$\} has degree at most two in $T^{(i)}$, the degree one vertices of $T^{(i)}$ are incident to exactly one edge of $\overline{\delta(S)}$, and the degree two vertices of $T^{(i)}$ are not incident to any edge of $\overline{\delta(S)}$. The edges $e \in \overline{\delta(S)}$ then connect $v^{(1)}_1$ or a leaf of $T^{(1)}$ with $v^{(2)}_1$ or a leaf of $T^{(2)}$. In other words, there are $k\coloneqq \vert \delta(S) \vert$ many internally disjoint paths $P_1,\ldots,P_k$ between $v^{(1)}_1$ and $v^{(2)}_1$ in the graph $\overline{G}\coloneqq (V(T^{(1)}\cup T^{(2)}),E(T^{(1)}\cup T^{(2)}) \cup \overline{\delta(S)})$, and each of these paths contains exactly one edge of $\overline{\delta(S)}$. Furthermore, for $\ell \in [k]$, $P_\ell$ corresponds to a subgraph in $G$ by re-substituting, for $i=1,2$, the vertices $v^{(i)}_j$ by the subgraphs $G^{(i)}_j$, the edges $E(T^{(i)})$ by $B^{(i)}$, and $\overline{\delta(S)}$ by $\delta(S)$, and this subgraph will be denoted by $G(P_\ell) \subseteq G$. Note that each $G(P_\ell)$ contains the subgraphs $G^{(1)}_1$ and $G^{(2)}_1$. We remark that the edges in $E(G(P_\ell)) \cap (B^{(1)} \cup B^{(2)} \cup \delta(S))$ define a coparallel class of $G$ for all $\ell \in [k]$, as long as $k \ge 3$. For $k=2$, the edges $B^{(1)} \cup B^{(2)} \cup \delta(S)$ define a coparallel class.
    
    Let $C' \coloneqq E(G(P_1))\cap (B^{(1)}\cup B^{(2)} \cup \delta(S)) \in \CP(G)$ if $k \ge 3$, and $C' \coloneqq B^{(1)}\cup B^{(2)} \cup \delta(S) \in \CP(G)$ for $k=2$. We can now enumerate vectors $\chi^{(C)}$ for all $C \in \CP(G) \setminus \{C'\}$, such that $\chi^{(C)} \in U$ and $(\chi^{(C)})_{C \in \CP(G)\setminus \{C'\}}$ are linearly independent. Note that for two vectors $\chi_1,\chi_2 \in \F$, we have $\chi_1-\chi_2 \in U$. We distinguish several cases depending on the type of the coparallel class.

\begin{figure}
  	\centering
   	\begin{tikzpicture}[scale=1.4]
       	\draw[fill=gray!30] (-4, 0) circle (1) node {\small$e_1$};
        \draw[fill=gray!30] (4, 0) circle (1) node {\small$e_2$};

        \node at (4,0.5) {$G^{(2)}_1$};
        \node at (-4,0.5) {$G^{(1)}_1$};
     
    	\coordinate (1) at (-3.8,-0.15);
    	\coordinate (2) at (-4.2,-0.15);
    	\draw (1) -- (2);
    	\fill (1) circle (1pt);
    	\fill (2) circle (1pt);

        \coordinate (3) at ($(4,0)+(45:0.3)$);
    	\coordinate (4) at ($(4,0)+(-45:0.3)$);
    	\draw (3) -- (4);
    	\fill (3) circle (1pt);
    	\fill (4) circle (1pt);

        \coordinate (v1) at ($(-4,0)+(60:1)$);
        \coordinate (v2) at ($(-4,0)+(30:1)$);
        \coordinate (v3) at ($(-4,0)+(10:1)$);
        \coordinate (v3a) at ($(-4,0)+(-10:1)$);
        \coordinate (v4) at ($(-4,0)+(-30:1)$);
        \coordinate (v5) at ($(-4,0)+(-60:1)$);

        \coordinate (w1) at ($(4,0)+(120:1)$);
        \coordinate (w2) at ($(4,0)+(150:1)$);
        \coordinate (w3) at ($(4,0)+(170:1)$);
        \coordinate (w3a) at ($(4,0)+(190:1)$);
        \coordinate (w4) at ($(4,0)+(210:1)$);
        \coordinate (w5) at ($(4,0)+(240:1)$);

        \coordinate (g1) at (-2.5,2.5);
        \coordinate (g2) at (-1,2.5);
        \coordinate (g3) at (1.5,2.5);
    	\draw[fill=gray!30] (g1) circle (0.5) node {\small$G^{(1)}_{2}$};
        \draw[fill=gray!30] (g2) circle (0.5) node {\small$G^{(1)}_{3}$};
        \draw[fill=gray!30] (g3) circle (0.5) node {\small$G^{(2)}_2$};
        
        \coordinate (v6) at ($(g1)+(180:0.5)$);
        \coordinate (v7) at ($(g1)+(0:0.5)$);
        \coordinate (v8) at ($(g2)+(180:0.5)$);
        \coordinate (v9) at ($(g2)+(0:0.5)$);
        \coordinate (v10) at ($(g3)+(180:0.5)$);
        \coordinate (v11) at ($(g3)+(0:0.5)$);

        \draw[blue, thick] (v1) -- (v6);
        \draw[blue, thick] (v7) -- (v8);
        \draw[thick, red] (v9) -- (v10);
        \draw[violet, thick] (v11) -- (w1);
        
        \fill (v6) circle (1.5pt);
        \fill (v7) circle (1.5pt);
        \fill (v8) circle (1.5pt);
        \fill (v9) circle (1.5pt);
        \fill (v10) circle (1.5pt);
        \fill (v11) circle (1.5pt);


        \coordinate (g4) at (-1.5,1);
        \coordinate (g5) at (1.5,1);
    	\draw[fill=gray!30] (g4) circle (0.5) node {\small$G^{(1)}_{4}$};
        \draw[fill=gray!30] (g5) circle (0.5) node {\small$G^{(2)}_{3}$};
        
        \coordinate (v12) at ($(g4)+(180:0.5)$);
        \coordinate (v13) at ($(g4)+(0:0.5)$);
        \coordinate (v14) at ($(g5)+(180:0.5)$);
        \coordinate (v15) at ($(g5)+(0:0.5)$);

        \draw[blue, thick] (v2) -- (v12);
        \draw[red, thick] (v13) -- (v14);
        \draw[violet, thick] (v15) -- (w2);
        
        \fill (v12) circle (1.5pt);
        \fill (v13) circle (1.5pt);
        \fill (v14) circle (1.5pt);
        \fill (v15) circle (1.5pt);

        \draw[red, thick] (v3) -- (w3);
        \draw[red, thick] (v3a) -- (w3a);

        \coordinate (g6) at (-1.5,-1);
    	\draw[fill=gray!30] (g6) circle (0.5) node {\small$G^{(1)}_5$};
        
        \coordinate (v16) at ($(g6)+(180:0.5)$);
        \coordinate (v17) at ($(g6)+(0:0.5)$);

        \draw[blue, thick] (v4) -- (v16);
        \draw[red, thick] (v17) -- (w4);
        
        \fill (v16) circle (1.5pt);
        \fill (v17) circle (1.5pt);

        \coordinate (g7) at (-2.5,-2.5);
        \coordinate (g8) at (-1,-2.5);
        \coordinate (g9) at (1,-2.5);
        \coordinate (g10) at (2.5,-2.5);
    	\draw[fill=gray!30] (g7) circle (0.5) node {\small$G^{(1)}_6$};
        \draw[fill=gray!30] (g8) circle (0.5) node {\small$G^{(1)}_7$};
        \draw[fill=gray!30] (g9) circle (0.5) node {\small$G^{(2)}_4$};
        \draw[fill=gray!30] (g10) circle (0.5) node {\small$G^{(2)}_5$};
        
        \coordinate (v18) at ($(g7)+(180:0.5)$);
        \coordinate (v19) at ($(g7)+(0:0.5)$);
        \coordinate (v20) at ($(g8)+(180:0.5)$);
        \coordinate (v21) at ($(g8)+(0:0.5)$);
        \coordinate (v22) at ($(g9)+(180:0.5)$);
        \coordinate (v23) at ($(g9)+(0:0.5)$);
        \coordinate (v24) at ($(g10)+(180:0.5)$);
        \coordinate (v25) at ($(g10)+(0:0.5)$);

        \draw[blue, thick] (v5) -- (v18);
        \draw[blue, thick] (v19) -- (v20);
        \draw[red, thick] (v21) -- (v22);
        \draw[violet, thick] (v23) -- (v24);
        \draw[violet, thick] (v25) -- (w5);
        
        \fill (v18) circle (1.5pt);
        \fill (v19) circle (1.5pt);
        \fill (v20) circle (1.5pt);
        \fill (v21) circle (1.5pt);
        \fill (v22) circle (1.5pt);
        \fill (v23) circle (1.5pt);
        \fill (v24) circle (1.5pt);
        \fill (v25) circle (1.5pt);

        \fill (v1) circle (1.5pt);
    	\fill (v2) circle (1.5pt);
        \fill (v3) circle (1.5pt);
        \fill (v4) circle (1.5pt);
        \fill (v5) circle (1.5pt);
        \fill (v3a) circle (1.5pt);
        
        \fill (w1) circle (1.5pt);
    	\fill (w2) circle (1.5pt);
        \fill (w3) circle (1.5pt);
        \fill (w4) circle (1.5pt);
        \fill (w5) circle (1.5pt);
        \fill (w3a) circle (1.5pt);
        
        \node at (-2.75,-3.5) {$\underbrace{\hspace{6cm}}_{G[S]}$};
    	\node at (2.75,-3.5) {$\underbrace{\hspace{6cm}}_{G[V\setminus S]}$};
        \node[] at (0,-3.5) {$\underbrace{\hspace{1.5cm}}_{\delta(S)}$};
    	
    	\end{tikzpicture}
    	\caption{Example for the situation in Claim 4. The edges of the cut $\delta(S)$ are colored in red. The edges of $B^{(1)}$ and $B^{(2)}$ are colored in blue and violet, respectively. Each grey-filled circle represents a $2$-edge-connected subgraph $G^{(i)}_j$, for $i=1,2$, and in this example we have $n_1=7$, and $n_2=5$.}
    	\label{fig: CLaim 4}
    \end{figure}
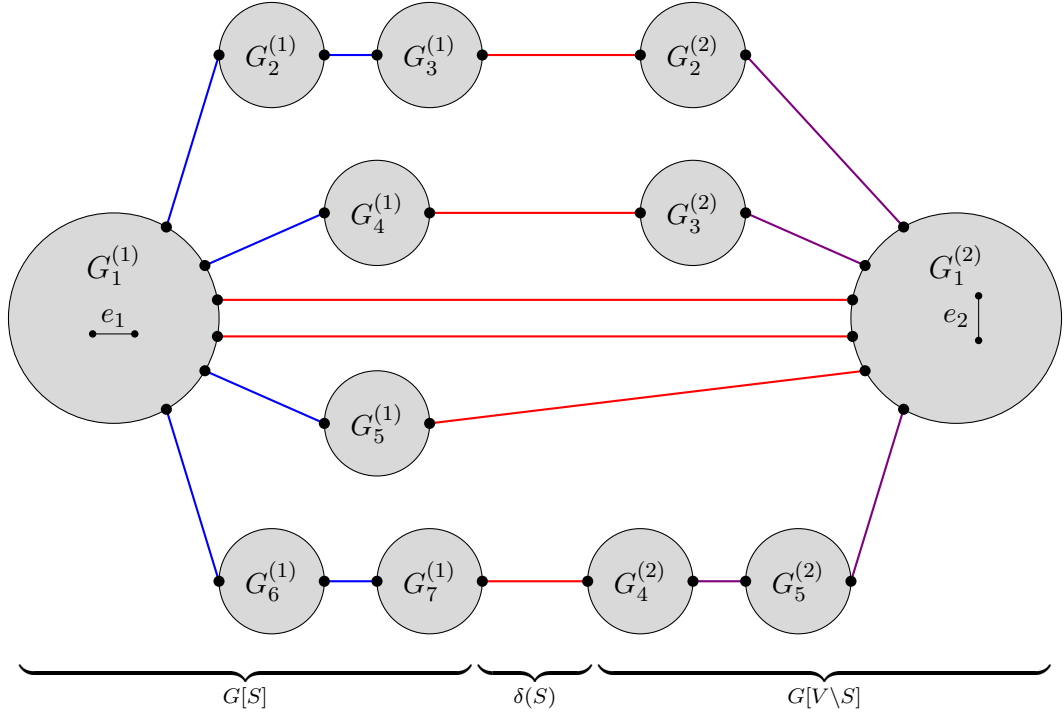

\textit{Case (1): Coparallel classes contained in $G^{(i)}_j$ for $i=1,2$, and $j \in [n_i]\setminus \{1\}$:} Let $C \in \CP_{G^{(i)}_{j}}(G)$ for $i \in \{1,2\}$, and $j \in [n_i] \setminus \{1\}$ (assuming $E(G^{(i)}_{j}) \neq \emptyset$) be a coparallel class. We will define a graph $F$ to help us construct feasible solutions on $\F$: Let $Q \in \{G(P_1),\ldots,G(P_k)\}$ be the subgraph that contains $G^{(i)}_j$, let $Q' \in \{G(P_1),\ldots,G(P_k)\}\setminus\{Q\}$, and let $v_1,v_2 \in V(G^{(i)}_j)$ be the two vertices that are incident to an edge of $\delta(S) \cup B^{(i)}$. Now, let $F$ be the union of $Q$ and $Q'$, after removing the edges $E(G^{(i)}_j)$ and the vertices $V(G^{(i)}_j)\setminus \{v_1,v_2\}$. Consider the set $\mathcal{H} \coloneqq \{H \subseteq G^{(i)}_j \mid F \cup H \text{ is $2$-edge-connected} \}$. We have $\chi^H+\chi^F \in \F$ for all $H \in \mathcal{H}$. Now, let $w \notin V(G^{(i)}_j)$ be a new vertex, $f_1\coloneqq \{v_1,w\}$, and $f_2 \coloneqq \{v_2,w\}$, and consider the graph $\widetilde{G} \coloneqq (V(G^{(i)}_{j}) \cup \{w\},E(G^{(i)}_{j})\cup \{f_1,f_2\})$ (we add two edges here to avoid creating a multi-edge in case $\{v_1,v_2\} \in E(G^{(i)}_j)$). Note that $\CP(\widetilde{G})=\CP_{G^{(i)}_j}(G)\cup\{\{f_1,f_2\}\}$. The incidence vectors $\{\chi^H + \chi^{\{f_1,f_2\}}\in \R^{E(\tilde{G})} \mid H \in \mathcal{H}\}$ are exactly the vertices of the face $\F_{f_1} \subseteq \TECSP(\widetilde{G})$ defined by $x_{f_1} \le 1$. By the proof of \Cref{boxineq1}, there are $\vert \CP(\widetilde{G})\vert-1=\vert \CP_{G^{(i)}_j}(G) \vert$ many linearly independent vectors of the form $\chi^{H_1}-\chi^{H_2}=\chi^{E(H_1)\cup \{f_1,f_2\}}-\chi^{E(H_2)\cup \{f_1,f_2\}} \in \R^{E(\tilde{G})}$ for some $H_1,H_2 \in \mathcal{H}$. Since we have $\chi^{H_1}-\chi^{H_2} \in U$ (now as incidence vectors in $\R^{E(G)}$) for all $H_1,H_2 \in \mathcal{H}$, this implies that we can find $\vert \CP_{G^{(i)}_j}(G) \vert$ many linearly independent vectors in $U$, whose support is a subset of $E(G^{(i)}_{j})$.

We can do the above for any subgraph $G^{(i)}_{j}$, with $i=1,2$ and $j \in [n_i]\setminus\{1\}$, and since the supports of the vectors that correspond to different graphs $G^{(i)}_j$ are disjoint, we find a linearly independent vector for any coparallel class $C\in \CP_{G^{(i)}_{j}}(G)$. We additionally remark that all these vectors are contained in the linear subspace $\{x \in \R^{E(G)} \mid x_e=0 \text{ for all } e \in \delta(S) \cup B^{(1)} \cup B^{(2)} \cup E(G^{(1)}_{1}) \cup E(G^{(1)}_{2})\}$.

\textit{Case (2): Coparallel classes that contain an edge of $\delta(S)$:} For any two distinct subgraphs $H_1,H_2 \in \{G(P_2),\ldots,G(P_k)\}$ (assuming $k \ge 3$), we have $\chi^{G(P_1) \cup H_1},\chi^{H_1 \cup H_2} \in \F$, which implies that $\chi^{H_2}-\chi^{G(P_1)}= \chi^{H_1 \cup H_2}-\chi^{G(P_1) \cup H_1} \in U$. This defines $k-1$ vectors contained in $U$ and projecting these onto the coordinates $\delta(S)\setminus \{e\}$, with $\{e\}=\delta(S) \cap E(G(P_1))$, yields the standard unit basis of $\R^{\delta(S)\setminus\{e\}}$. Hence, these vectors are linearly independent and since the vectors considered in \textit{Case (1)} are all contained in the linear subspace $\{x \in \R^{E(G)} \mid x_e=0 \text{ for all } e \in \delta(S) \}$, which is not true for the new vectors, the vectors of \textit{Case (1)} together with the ones of \textit{Case (2)} are linearly independent. For the next case, we point out that the vectors considered in \textit{Case (2)} are all contained in the linear subspace $\{x \in \R^{E(G)} \mid x_e=0 \text{ for all } e \in E(G^{(1)}_1) \cup E(G^{(2)}_1)\}$. We additionally remark that this is the only case were we encounter a unique coparallel class to which we do not assign a vector in $U$. Furthermore, this also agrees with the case $k=2$, since here the edges in $B^{(1)} \cup B^{(2)} \cup \delta(S)$ define a single coparallel class, and therefore we do not assign a vector of $U$ to this coparallel class.

\begin{figure}
  	\centering
   	\begin{tikzpicture}[scale=1.2]

        \coordinate (1) at (0:3);
        \coordinate (2) at (45:3);
        \coordinate (3) at (90:3);
        \coordinate (4) at (135:3);
        \coordinate (5) at (180:3);
        \coordinate (6) at (225:3);
        \coordinate (7) at (270:3);
        \coordinate (8) at (315:3);

        \coordinate (v1) at ($(1)+(-90:0.5)$);
        \coordinate (v2) at ($(1)+(90:0.5)$);
        \coordinate (v3) at ($(2)+(-45:0.5)$);
        \coordinate (v4) at ($(2)+(135:0.5)$);
        \coordinate (v5) at ($(3)+(0:0.5)$);
        \coordinate (v6) at ($(3)+(180:0.5)$);
        \coordinate (v7) at ($(4)+(45:0.5)$);
        \coordinate (v8) at ($(4)+(225:0.5)$);
        \coordinate (v9) at ($(5)+(90:0.5)$);
        \coordinate (v10) at ($(5)+(-90:0.5)$);
        \coordinate (v11) at ($(6)+(135:0.5)$);
        \coordinate (v12) at ($(6)+(-45:0.5)$);
        \coordinate (v13) at ($(7)+(180:0.5)$);
        \coordinate (v14) at ($(7)+(0:0.5)$);
        \coordinate (v15) at ($(8)+(225:0.5)$);
        \coordinate (v16) at ($(8)+(45:0.5)$);

        \coordinate (m23) at ($0.5*(v2)+0.5*(v3)$);

        \draw[red, line width=0.5mm] (v2) -- (v3);
        \draw[red,line width=0.5mm] (v4) -- (v5);
        \draw[cyan!70,line width=0.5mm] (v6) -- (v7);
        \draw[cyan!70,line width=0.5mm] (v8) -- (v9);
        \draw[cyan!70, line width=0.5mm] (v10) -- (v11);
        \draw[orange!50,line width=0.5mm] (v12) -- (v13);
        \draw[orange!50,line width=0.5mm] (v14) -- (v15);
        \draw[blue,line width=0.5mm] (v16) -- (v1);
        
       	\draw[fill=gray!30] (1) circle (0.5);
        \draw[fill=gray!30] (2) circle (0.5);
        \draw[fill=gray!30] (3) circle (0.5);
        \draw[fill=gray!30] (4) circle (0.5);
        \draw[fill=gray!30] (5) circle (0.5);
        \draw[fill=gray!30] (6) circle (0.5);
        \draw[fill=gray!30] (7) circle (0.5);
        \draw[fill=gray!30] (8) circle (0.5);

        \coordinate (x1) at ($(1)+(0:0.5)$);

        \draw (v2) -- ($(v2)+(45:0.5)$);
        \draw[dotted] ($(v2)+(45:0.5)$) -- ($(v2)+(45:1)$);
        
        \draw (x1) -- ($(x1)+(45:0.5)$);
        \draw[dotted] ($(x1)+(45:0.5)$) -- ($(x1)+(45:1)$);
        
        \draw (x1) -- ($(x1)+(-45:0.5)$);
        \draw[dotted] ($(x1)+(-45:0.5)$) -- ($(x1)+(-45:1)$);
        \fill (x1) circle (2pt);
        \node[below left] (f2) at ($(x1)+(-45:0.5)+(45:0.15)$) {$f_2$};

        \draw ($(3)+(90:0.5)$) -- ($(3)+(90:1)$);
        \draw[dotted] ($(3)+(90:1)$) -- ($(3)+(90:1.5)$);
        \fill ($(3)+(90:0.5)$) circle (2pt);
        \node[right] (f1) at ($(3)+(90:1)$) {$f_1$};

        \draw ($(6)+(185:0.5)$) -- ($(6)+(185:1)$);
        \draw[dotted] ($(6)+(185:1)$) -- ($(6)+(185:1.5)$);
        \fill ($(6)+(185:0.5)$) circle (2pt);

        \draw ($(6)+(245:0.5)$) -- ($(6)+(245:1)$);
        \draw[dotted] ($(6)+(245:1)$) -- ($(6)+(245:1.5)$);
        \fill ($(6)+(245:0.5)$) circle (2pt);
        \node[right] (f4) at ($(6)+(245:1)$) {$f_4$};
        
        \draw (v16) -- ($(v16)+(0:0.5)$);
        \draw[dotted] ($(v16)+(0:0.5)$) -- ($(v16)+(0:1)$);
        \node[below] (f3) at ($(v16)+(0:0.5)$) {$f_3$};

        \foreach \i in {1,...,16}{
            \fill (v\i) circle (2pt);
        }


        \coordinate (e1) at ($(2)+(45:0.2)$);
        \coordinate (e2) at ($(2)+(-45:0.2)$);
        \draw (e1) -- (e2);
    	\fill (e1) circle (1pt);
    	\fill (e2) circle (1pt);
        \node (a) at (2) {\tiny$e_1$};

    	\end{tikzpicture}
    	\caption{Example for how a coparallel class of the component $G^{(1)}_1$ looks like. The black edges are edges of $B^{(1)}$ or $\delta(S)$. All edges that have the same color belong to the same coparallel class of $G$; all together, the colored edges form a coparallel class of $G_1^{(1)}$.}
    	\label{fig: CLaim 4 Part 2}
    \end{figure}
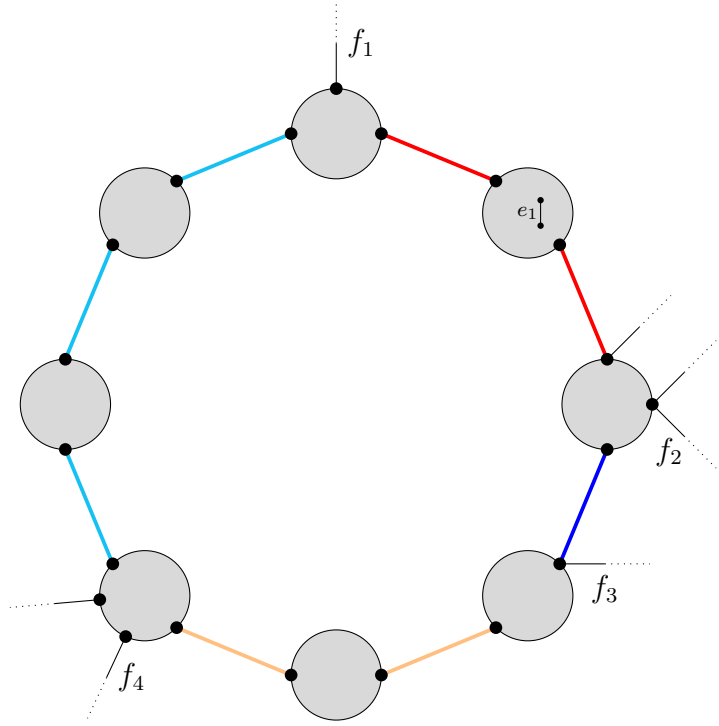

\textit{Case (3): Coparallel classes that are contained in $G^{(i)}_1$ for $i=1,2$:} We will first give a vector for any coparallel class $C \in \CP(G^{(i)}_1)$ for $i=1,2$. We consider all incidence vectors $X \coloneqq \{\chi^H \mid H \subseteq G^{(i)}_1 \,\, 2\text{-edge-connected, } e_i \in E(H)\} \subseteq \F$. With the same arguments as above we can find $\vert \CP(G^{(i)}_1) \vert-1$ many linearly independent vectors of the form $\chi_1-\chi_2 \in U$, with $\chi_1,\chi_2 \in X$. These vectors form a linearly independent family with all the previously considered vectors, since they are supported on a subset of $E(G^{(i)}_1)\setminus\{e_i\}$, whereas for all the so far considered vectors $\chi \in U$ we have $\chi_e=0$ for all $e \in E(G^{(i)}_1)$. This yields $\vert \CP(G_1^{(1)})\vert+\vert \CP(G_1^{(2)})\vert-2$ many new vectors contained in $U$.

Now, pick two distinct graphs $H_1,H_2 \in \{G(P_1),\ldots,G(P_k)\}$. Then $\chi^{H_1 \cup H_2} \in \F$ and this implies $\chi^{H_1 \cup H_2}-\chi^{G^{(1)}_1},\chi^{H_1 \cup H_2}-\chi^{G^{(2)}_1} \in U$. It is clear that these vectors are linearly independent, since projecting these vectors onto the coordinates $\{e_1,e_2\}$ yields the standard unit basis of $\R^{\{e_1,e_2\}}$. This also shows that these two vectors are linearly independent to all previously considered vectors since the latter are all contained in the linear subspace $\{x \in \R^{E(G)} \mid x_{e_1}=x_{e_2}=0\}$. Therefore, we constructed $\vert \CP(G_1^{(1)})\vert+\vert \CP(G_1^{(2)})\vert$ many new linearly independent vectors in \textit{Case (3)}.

Note that $\vert \CP(G^{(1)}_1) \vert \le \vert \CP_{G^{(1)}_1}(G)\vert$ as discussed in the preliminaries. Now, let $C \in \CP(G^{(1)}_1)$ be a coparallel class of $G^{(1)}_1$ that is not a coparallel class of $G$. Then $C=C_1 \dot{\cup} \ldots \dot{\cup} C_t$ is the disjoint union of coparallel classes $C_1,\ldots,C_t \in \CP(G)$. If we contract the connected components of $G^{(1)}_1-C$ to one single vertex each, we obtain a cycle with edges $C$. Moreover, the edges of any coparallel class $C_s$, $s=1,\ldots,t$, form a path in this cycle, which can be seen following the arguments of \Cref{cpc of G-C obs}. See \Cref{fig: CLaim 4 Part 2} for a visualization of the described situation. We can assume that the labeling is chosen such that the edges of $C_s$ and $C_{s+1}$ are next to each other in this cycle for all $s \in [t]$, where $C_{t+1} \coloneqq C_1$. Furthermore, we can assume that either $e_1 \in C_1$ or $e_1$ is contained in a connected component of $G^{(1)}_1-C$ that contains a vertex of an edge of $C_1$. For every $s \in [t]$ there is an edge $f_s \in B^{(1)}\cup\delta(S)$, such that $f_s$ is incident to the vertex in the contracted graph between $C_{s-1}$ and $C_{s}$, setting $C_0 \coloneqq C_t$. We remark that $f_s$ is contained in exactly one graph of $\{G(P_1),\ldots,G(P_k)\}$ and we will denote this graph by $P_{f_s}$. For $s \in [1,t-1]$, let $H_s$ denote the subgraph that contains all connected components of $G^{(1)}_1-C$ that have at least one common vertex with an edge of $C_1 \cup \ldots \cup C_s$, all edges of $C_1 \cup \ldots \cup C_s$, and the edges $(E(P_{f_1}) \cup E(P_{f_{s+1}}))\setminus E(G^{(1)}_1)$ (with induced vertex set). Furthermore, let $H_t\coloneqq P_{f_1} \cup P_{f_t}$. Then $H_s$ is a $2$-edge-connected subgraph and $\chi^{H_s} \in \F$ for all $s \in [t]$. Hence, we have $\chi^{(s)} \coloneqq \chi^{H_{s+1}}-\chi^{H_{s}} \in U$ for all $s \in [t-1]$. Now, choose $g_s \in C_s$ for $s \in [t]$. Projecting $\chi^{(s)}$, $s=1,\ldots,t-1$, onto the coordinates $\{g_2,\ldots,g_t\}$ yields the standard basis of $\R^{\{g_2,\ldots,g_t\}}$, and hence, $\{\chi^{(s)}\}_{s \in [t-1]}$ is linearly independent, while the previously considered vectors are all contained in the linear subspace $U_C \coloneqq \{x \in \R^{E(G)} \mid x_{g_1}=\ldots=x_{g_t}\}$. We thus have $t-1$ additional  vectors contained in $U$. This can analogously be done for every coparallel class of $G^{(2)}_1$ that is not a coparallel class of $G$. We remark that for different coparallel classes $C_1,C_2 \in \CP(G^{(1)}_1) \cup \CP(G^{(2)}_2)$, the newly constructed vectors belonging to $C_1$ are contained in $U_{C_2}$ and the other way around due to \Cref{completelycontained}, and therefore all the constructed vectors are linearly independent.

In total, we constructed $\vert \CP(G) \vert-1$ linearly independent vectors contained in $U$, which completes the proof of Claim 4. \hfill $\blacksquare$

Claims 1 -- 4 establish the proof. \qedhere

\end{proof}

Though it is in general not true that the so far mentioned inequalities describe the whole $2$-edge-connected subgraph polytope $\TECSP(G)$, they suffice to describe all its lattice points.

\begin{theorem}
    \label{lattice points}
    Let $G$ be a $2$-edge-connected graph with edge set $E$. Let \[\P(G) \coloneqq \{x \in [0,1]^{E} \mid x \text{ satisfies all \emph{asymmetric} and \emph{connectivity cut inequalities}}\}.\] Then
    \begin{align*}
        \P(G) \cap \Z^{E} = \{\chi^H \in \{0,1\}^{E} \mid H \subseteq G \text{ is } 2 \text{-edge-connected}\}. 
    \end{align*}
\end{theorem}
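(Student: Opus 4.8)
The plan is to prove the two inclusions separately, the inclusion $\supseteq$ being immediate and $\subseteq$ requiring a short combinatorial argument organized around the two inequality families. For $\supseteq$: if $H \subseteq G$ is $2$-edge-connected, then $\chi^H$ is a vertex of $\TECSP(G)$, hence satisfies every asymmetric cut inequality and every connectivity cut inequality, since these are valid for $\TECSP(G)$ (as observed when they were introduced); moreover $\chi^H \in \{0,1\}^{E} \subseteq [0,1]^{E}$, so $\chi^H \in \P(G) \cap \Z^{E}$.

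For $\subseteq$, take $x \in \P(G) \cap \Z^{E}$. Since $x \in [0,1]^{E}\cap \Z^{E}$, we have $x = \chi^F$ for some $F \subseteq E$. If $F = \emptyset$ we are done, as $\mathbf{0}$ is the incidence vector of a single vertex, which is $2$-edge-connected. Otherwise let $H$ be the subgraph of $G$ with $E(H) = F$ and with $V(H)$ equal to the set of vertices covered by $F$, so that $H$ has no isolated vertices; it suffices to show that $H$ is connected and bridgeless. Two elementary observations drive the argument. First, for a cut $\delta(S)$ and $e \in \delta(S)$, the asymmetric cut inequality $x_e - x(\delta(S)\setminus\{e\}) \le 0$ is violated by $\chi^F$ exactly when $F \cap \delta(S) = \{e\}$; hence $\chi^F \in \P(G)$ forces $\vert F \cap \delta(S)\vert \neq 1$ for every $\emptyset \subsetneq S \subsetneq V(G)$. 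Second, whenever $e_1 \in E(G[S]) \cap F$ and $e_2 \in E(G[V(G)\setminus S]) \cap F$, the connectivity cut inequality $2x_{e_1} - x(\delta(S)) + 2x_{e_2} \le 2$ evaluated at $\chi^F$ reads $4 - \vert F \cap \delta(S)\vert \le 2$, so it forces $\vert F \cap \delta(S)\vert \ge 2$.

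It then remains to derive bridgelessness and connectedness of $H$. If some $e = \{u,v\} \in F$ were a bridge of $H$, let $A$ be the vertex set of the component of $H - e$ containing $u$; then $v \notin A$, so $\emptyset \subsetneq A \subsetneq V(G)$, and every edge of $F$ other than $e$ has both endpoints inside a single component of $H-e$, whence $F \cap \delta(A) = \{e\}$, contradicting the first observation. If $H$ were disconnected, then, since $H$ has no isolated vertices and $F \neq \emptyset$, it has two distinct components $K_1,K_2$ each carrying an edge $e_i \in E(K_i)$; taking $S \coloneqq V(K_1)$ yields $\emptyset \subsetneq S \subsetneq V(G)$ with $e_1 \in E(G[S])$, $e_2 \in E(G[V(G)\setminus S])$, and $F \cap \delta(S) = \emptyset$ because no edge of $H$ leaves the component $K_1$, contradicting the second observation. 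Hence $H$ is connected and bridgeless, i.e. $2$-edge-connected, and $x = \chi^H$ lies in the right-hand side.

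There is no serious obstacle here; the only points that need care are bookkeeping ones: choosing $V(H)$ to be exactly the covered vertices (so that a disconnected $H$ with edges necessarily has two edge-bearing components, letting us invoke a connectivity cut inequality), and translating a graph-theoretic bridge or component of the subgraph $H$ into an honest cut $\delta(S)$ of the ambient $2$-edge-connected graph $G$ — which is legitimate precisely because every edge of $F$ joins two vertices of $V(H)$.
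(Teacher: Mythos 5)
Your proposal is correct and follows essentially the same route as the paper's proof: the forward inclusion is immediate from validity of the inequalities, and for the reverse inclusion you take the edge-induced subgraph of the support, rule out a bridge via a violated asymmetric cut inequality, and rule out disconnectedness via a violated connectivity cut inequality (using that the support graph has no isolated vertices, so each component carries an edge). The only cosmetic difference is the order of the two cases.
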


\begin{proof}
    It is clear that $\{\chi^H \in \{0,1\}^E \mid H \subseteq G \text{ is } 2 \text{-edge-connected}\} \subseteq \P(G) \cap \Z^{E}$. Let $y \in \P(G)\cap \Z^E$ and let $G[y]$ be the subgraph of $G$ with edge set $E(G[y])=\supp(y)$ and induced vertex set of $E(G[y])$. Hence, $\chi^{G[y]}=y$. The claim follows if we can show that $G[y]$ is $2$-edge-connected. We can assume that $y \neq \textbf{0}$, since otherwise $G[y]$ is the empty graph that is $2$-edge-connected by definition. Suppose first that $G[y]$ is not connected. Then there exist at least two connected components $G_1$ and $G_2$. By definition, $G[y]$ does not contain any isolated vertices and therefore, $E(G_1),E(G_2) \neq \emptyset$. Considering the cut inducing set $S=V(G_1)$, and choosing $e_1 \in E(G_1)$, and $e_2 \in E(G_2)$, we obtain a violated connectivity cut inequality, a contradiction to $y \in \P(G)$. Suppose now that $G[y]$ is connected but not $2$-edge-connected. Then $G[y]$ has a bridge $e \in E(G[y])$. Furthermore, there is a vertex set $\emptyset \subsetneq S \subsetneq V(G)$ such that $\delta(S) \cap E(G[y])=\{e\}$. Then we have
    \begin{align*}
        y_e-y(\delta(S) \setminus\{e\}) = 1,
    \end{align*}
    and hence, $y$ violates an asymmetric cut inequality, again a contradiction to $y \in \P(G)$. This implies that $G[y]$ is $2$-edge-connected.
\end{proof}

Due to the last theorem, we yield an integer linear program for the problem of finding a minimal/maximal weighted $2$-edge-connected subgraph $H \subseteq G$ of a $2$-edge-connected graph~$G$. For more details on that, see \Cref{experiments}.

\subsection{The coparallel class inequalities}

The next type of inequalities we want to present heavily relies on the structure of the coparallel classes. We motivate the idea behind this type of inequalities with the following example. Consider the graph $G$ in \Cref{Example CPCI}. The edges $f_1,f_2$, and $f_3$ form a coparallel class and therefore we have $x_{f_1}=x_{f_2}=x_{f_3}$ for all $x \in \TECSP(G)$. The graph $G-\{f_1,f_2,f_3\}$ consists of three complete graphs $K_3^{(1)},K_3^{(2)}$ and $K_3^{(3)}$. Let $e_i \in E(K_3^{(i)})$ for $i \in [3]$. Then every $2$-edge-connected subgraph $H \subseteq G$ that contains at least two of the edges $e_1,e_2$ or $e_3$ also contains the edges $f_1,f_2$ and $f_3$. This shows that the inequality
\begin{align*}
    x_{e_1}+x_{e_2}+x_{e_3}-2x_{f_1} \le 1
\end{align*}
is a supporting hyperplane for $\TECSP(G)$ that cuts off the otherwise feasible point $y \in \R^{E(G)}$ with $y_{f_1}=y_{f_2}=y_{f_3}=0$ and $y_e=\frac{1}{2}$ for all $e \in E(G) \setminus \{f_1,f_2,f_3\}$.

This can be generalized as follows. Let $G$ be an arbitrary $2$-edge-connected graph. Let $C \in \CP(G)$ be a coparallel class, and let $G_1,\ldots,G_r$ be the connected components of $G-C$ that contain edges and suppose $r \ge 3$. Let $f \in C$ and $e_j \in E(G_j)$ for all $j \in [r]$. Then the inequality
\begin{align*}
    \sum_{j=1}^r x_{e_j}-(r-1)x_f \le 1
\end{align*}
is a supporting hyperplane of $\TECSP(G)$. An inequality of that type will be called a \emph{coparallel class inequality}. Note, that this also makes sense for $r=1$ or $r=2$. But in the first case, one obtains the box inequality $x_{e_1} \le 1$ and in the second case the corresponding inequality is equivalent to the connectivity cut inequality
\begin{align*}
	2x_{e_1}-x_f-x_{f'}+2x_{e_2} \le 2,
\end{align*}
where $f' \in C$ is a second edge of $C$ such that in $G-\{f,f'\}$ the edges $e_1$ and $e_2$ lie in different connected components. In both cases, the considered inequalities define facets due to \Cref{boxineq1} and \Cref{CCI Thm}, respectively. The following theorem shows that the coparallel class inequalities in fact always define facets of $\TECSP(G)$.

\begin{figure}
    \centering
    \begin{tikzpicture}
        \draw[thick] (0,0) -- (3,0);
        \draw[thick] (0,0) -- (60:3);
        \draw[thick] (3,0) -- (60:3);
        \draw[thick] (60:2) -- ($(3,0)+(120:2)$);
        \draw[thick] (1,0) -- (60:1);
        \draw[thick] (2,0) -- ($(3,0)+(120:1)$);
        
        \fill (0,0) circle (2pt);
        \fill (1,0) circle (2pt);
        \fill (2,0) circle (2pt);
        \fill (3,0) circle (2pt);
        \fill (60:1) circle (2pt);
        \fill (60:2) circle (2pt);
        \fill (60:3) circle (2pt);
        \fill ($(3,0)+(120:1)$) circle (2pt);
        \fill ($(3,0)+(120:2)$) circle (2pt);
        \coordinate[label=left:$f_1$] (D) at (60:1.5);
        \coordinate[label=below:$f_2$] (E) at (1.5,0);
        \coordinate[label=right:$f_3$] (F) at ($(3,0)+(120:1.5)$);
    \end{tikzpicture}
    \caption{The graph used in the initial motivating example for coparallel class inequalities.}
    \label{Example CPCI}
\end{figure}
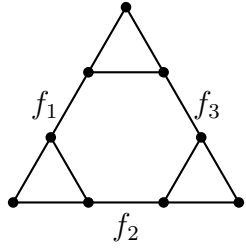

\begin{theorem}
    Let $G$ be $2$-edge-connected, let $C \in \CP(G)$ be a coparallel class, and let $G_1,\ldots,G_r$ be the connected components of $G-C$ that contain edges. Let $f \in C$ and $e_j \in E(G_j)$ for $j \in [r]$. Then the coparallel class inequality
    \begin{align*}
        \sum_{j=1}^r x_{e_j}-(r-1)x_f \le 1
    \end{align*}
    defines a facet of $\TECSP(G)$.
\end{theorem}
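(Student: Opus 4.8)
The plan is to count affinely independent points on the face $\F \subseteq \TECSP(G)$ defined by $\sum_{j=1}^r x_{e_j} - (r-1)x_f \le 1$, and show there are $\dim(\TECSP(G)) = |\CP(G)|$ many; by \Cref{dimension} this forces $\F$ to be a facet. As in the proof of \Cref{CCI Thm}, I will work with the linear subspace $U$ with $\aff(\F) = y + U$ for $y \in \F$, and exhibit $|\CP(G)| - 1$ linearly independent vectors in $U$ of the form $\chi^{H_1} - \chi^{H_2}$ for $2$-edge-connected subgraphs $H_1, H_2$ with $\chi^{H_1}, \chi^{H_2} \in \F$. (Note $\F \subsetneq \TECSP(G)$ since $\mathbf 0 \notin \F$, so $|\CP(G)|-1$ independent vectors in $U$ suffices.) The points on $\F$ are of two flavors: those $H$ containing $f$ (hence all of $C$, by \Cref{completelycontained}) and \emph{at most one} $e_j$; and those $H$ not containing $C$, which then lie entirely in one component $G_j$ of $G-C$.

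The key structural observation is that $\CP(G)$ is partitioned into: (a) $C$ itself; (b) for each $j \in [r]$, the coparallel classes of $G$ contained in $E(G_j)$, which by \Cref{G-C} and the discussion around \Cref{cpc of G-C obs} are exactly the coparallel classes of $G_j$, possibly after splitting those classes of $G_j$ whose "cut edges" of $C$ both attach to the same contracted component into two classes of $G$ — but here $C$ attaches to each $G_j$ via exactly two edges, so the splitting phenomenon is controlled. I would handle the coparallel classes by case on which component they live in. For classes inside a fixed $G_j$: fix a $2$-edge-connected subgraph $W_i$ of $G_i$ for each $i \ne j$ (each containing a chosen $e_i$ is not needed — any $2$-edge-connected subgraph works), set $D := \bigcup_{i \ne j} W_i \cup C$, which is $2$-edge-connected and contains $f$ but only controlled $e_i$'s; then for any $2$-edge-connected $H \subseteq G_j$ the graph $D \cup H$ is $2$-edge-connected and, provided $D$ contributes at most one $e_i$ with $i\ne j$ and $H$ contributes at most $e_j$, lies on $\F$. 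Mimicking the triangular-matrix / partial-order argument from \Cref{boxineq1} (add a two-edge gadget $\{f_1,f_2\}$ bridging the two attachment vertices of $G_j$, apply the box-inequality facet proof inside $\TECSP(\widetilde{G_j})$, then subtract incidence vectors) yields $|\CP_{G_j}(G)|$ independent vectors supported on $E(G_j)$ — actually one needs care: we get the full dimension of the $x_{f_1}\le1$ facet, which is $|\CP(\widetilde{G_j})| - 1 = |\CP_{G_j}(G)|$, exactly matching the number of coparallel classes of $G$ inside $G_j$ (again using that $C$ touches $G_j$ in two edges so no splitting occurs). Summing over $j$ gives $\sum_{j=1}^r |\CP_{G_j}(G)| = |\CP(G)| - 1$ vectors, all with $x_e = 0$ for $e \in C$.

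That accounts for every coparallel class except $C$; since $\F \subsetneq \TECSP(G)$ we do not need a vector for $C$, so after checking linear independence across the $j$'s — immediate because their supports are disjoint subsets of the $E(G_j)$ — the count is complete. The main obstacle I anticipate is the bookkeeping in the "inside $G_j$" step: one must verify that the gadget construction does not merge or split coparallel classes in an unintended way, that $|\CP(\widetilde{G_j})| - 1$ really equals the number of $\CP(G)$-classes inside $G_j$ (this uses that $C$ meets $G_j$ in exactly two edges, a consequence of the structure in \Cref{structure of cpc}), and that for $i \ne j$ one can always pick $W_i \subseteq G_i$ avoiding all but at most one of the prescribed $e_i$ — which is fine since each $G_i$ has a $2$-edge-connected subgraph omitting any given edge unless $G_i$ is a single cycle through $e_i$, a case one dispatches by choosing $W_i$ for only one index $i$ and taking $W_i = G_i$ for the rest, keeping the total contribution of prescribed edges from $D$ at exactly one. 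A secondary subtlety: when $r$ prescribed edges $e_i$ could coincide with attachment edges or when some $G_i$ is tiny — but "contains edges" was assumed, and isolated-vertex components of $G-C$ are irrelevant to the inequality, so these are handled by the same reductions used earlier in the paper.
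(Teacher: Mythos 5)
Your overall strategy (exhibit $\vert\CP(G)\vert-1$ linearly independent vectors in the subspace $U$ with $\aff(\F)=y+U$) is the same as the paper's, but your description of the face $\F$ is wrong, and this breaks the construction. If a $2$-edge-connected $H$ contains $f$ (hence all of $C$), the left-hand side of the inequality is $\sum_{j}\chi^H_{e_j}-(r-1)$, so $\chi^H\in\F$ forces $H$ to contain \emph{all} of $e_1,\ldots,e_r$, not ``at most one''. Consequently your points $\chi^{D\cup H}$, with $D\supseteq C$ contributing at most one prescribed edge and $H$ contributing at most $e_j$, evaluate to at most $2-(r-1)=3-r<1$ for $r\ge 3$, so they do not lie on $\F$ and the differences you take are not in $U$. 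If you repair this by requiring $e_i\in E(W_i)$ for all $i\ne j$ and $e_j\in E(H)$, you are restricted to the intersection of the two faces $x_{f_1}\le 1$ and $x_{e_j}\le 1$ of $\TECSP(\widetilde{G_j})$, which yields only $\vert\CP_{G_j}(G)\vert-1$ independent vectors per component, so the total comes up short by $r$.

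In fact your final count is structurally impossible: you claim $\vert\CP(G)\vert-1$ independent vectors in $U$ all satisfying $x_e=0$ for $e\in C$. But $\chi^G$ and $\chi^{G_1}$ both lie on $\F$, so $\chi^G-\chi^{G_1}\in U$, and this vector equals $1$ on every edge of $C$; since $\F$ is a proper face, $\dim(U)\le\vert\CP(G)\vert-1$, whence $U\cap\{x\in\R^{E(G)}\mid x_e=0 \text{ for all } e\in C\}$ has dimension at most $\vert\CP(G)\vert-2$. The paper's proof supplies exactly the ingredients you are missing: it takes its per-component vectors from the ``other flavor'' of face points (2-edge-connected subgraphs of $G_i$ containing $e_i$ and nothing of $C$, giving $\vert\CP(G_i)\vert-1$ vectors via \Cref{boxineq1}), adds one vector for each coparallel class of some $G_i$ that splits into two classes of $G$ (via \Cref{cpc of G-C obs}), and finally adjoins the $r$ vectors $\chi^G-\chi^{G_i}$, which are precisely the ones not vanishing on $C$ and which close the gap to $\vert\CP(G)\vert-1$.
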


\begin{proof}
Let $\F \subseteq \TECSP(G)$ denote the face that is defined by the considered coparallel class inequality, and let $U \subseteq \R^{E(G)}$ be the linear subspace with $\aff(\F)=y+U$ for $y \in \F$. Since $\textbf{0} \notin \F$, we know that $\F$ is a proper face of $\TECSP(G)$. We will construct $\vert \CP(G) \vert -1$ many linearly independent vectors contained in $U$, which implies that $\F$ is a facet. For $i \in [r]$, we consider the vectors $X_i \coloneqq \{\chi^H \mid H \subseteq G_i, \, e_i \in E(H)\}$. After projecting them onto the coordinates $E(G_i)$, these are exactly the vertices of the face $\F_{e_i} \subseteq \TECSP(G_i)$ of $\TECSP(G_i)$ defined by $x_{e_i} \le 1$, . By the proof of \Cref{boxineq1}, this implies that there are $\vert \CP(G_i) \vert -1$ many linearly independent vectors of the form $\chi_1-\chi_2$, with $\chi_1, \chi_2 \in X_i$, and we have $\chi_1-\chi_2 \in U$. We remark that $\supp(\chi_1-\chi_2) \subseteq E(G_i)\setminus \{e_i\}$. Since we can construct this kind of vectors for any $i \in [r]$, and the vectors corresponding to different components $G_i \neq G_j$ have disjoint support, all the considered vectors are linearly independent. In total we have $\vert \CP(G_1) \vert +\ldots+\vert \CP(G_r)\vert -r$ many vectors so far. Now, let $C' \in \CP(G_i)$ be a coparallel class of $G_i$, that is not a coparallel class of $G$. We know by \Cref{cpc of G-C obs} that $C'=C_1 \dot{\cup} C_2$, where $C_1$ and $C_2$ are coparallel classes of $G_i$. Let $G^{(1)}_1,\ldots,G^{(1)}_{\ell_1}$ ($G^{(2)}_1,\ldots,G^{(2)}_{\ell_2}$) denote the connected components of $G_i-C'$ that contain a vertex of an edge of $C_1$ ($C_2$, respectively). We remark that there are one or two components that contain a vertex of both an edge of $C_1$ and an edge of $C_2$; they are thus named twice. For $j=1,2$, we define the subgraph $H_j$ containing (a) all connected components of $G-C$ except $G_i$, (b) the components $G^{(j)}_1,\ldots,G^{(j)}_{\ell_j}$, and (c) all edges of $C_j$. Then $H_1$ and $H_2$ are $2$-edge-connected subgraphs and at least one of them contains the edge $e_i$. Hence, $\chi^{H_1}$ or $\chi^{H_2}$ is an incidence vector $\chi$ contained in $\F$, and $\chi^G-\chi \in U$. Let $e \in C_1$, and $e' \in C_2$, then all the so far considered vectors are contained in the linear subspace $U' \coloneqq \{x \in \R^{E(G)} \mid x_e=x_{e'}\}$, but $\chi^G-\chi \notin U'$, which shows that $\chi^G-\chi$ is linearly independent to all the previously considered vectors.

We can do this for every coparallel class $C' \in \CP(G_1) \cup \ldots \cup \CP(G_r)$ that is not a coparallel class of $G$, and all newly constructed vectors are linearly independent to all the previously considered vectors by the same argument. Thus, there are $\vert \CP(G) \vert-r-1$ many affinely independent vectors contained in $\F$, since $\vert \CP(G) \vert=\vert \CP_{G_1}(G) \vert + \ldots +\vert \CP_{G_r}(G)\vert+1$. Furthermore, we have $Y\coloneqq \{\chi^G-\chi^{G_i} \mid i \in [r]\} \subseteq U$. These vectors are linearly independent, as one can see easily by projecting them onto the coordinates $\{e_1,\ldots,e_r\}$. All the so far considered vectors are contained in the linear subspace $\{x \in \R^{E(G)}\mid x_{e_1}=\ldots=x_{e_r}=0\}$ which is not true for the vectors in $Y$. This shows that $\dim(U) \ge \vert \CP(G) \vert -1$, which finishes the proof.

\end{proof}

\subsection{The odd star inequalities}

The last type of inequalities will be especially defined for complete graphs $K_n$ on $n$ vertices. For example, consider the graph $K_4$ that can be seen in \Cref{K4}. Let $e_1$, $e_2$ and $e_3$ be the edges incident to some vertex $v$ and be blue, and let the other edges $f_1$, $f_2$, and $f_3$ be red. Every $2$-edge-connected subgraph $H \subseteq K_4$ that contains at least one edge incident to $v$ has to contain at least one other blue edge and one red edge. If all three edges $e_1$, $e_2$, and $e_3$ are contained, then $H$ has to contain at least two red edges. This shows that the inequality
\begin{align}
	\label{example OSI}
    x_{e_1}+x_{e_2}+x_{e_3}-x_{f_1}-x_{f_2}-x_{f_3} \le 1
\end{align}
defines a supporting hyperplane for $K_4$, which cuts off the otherwise feasible point $y \in \R^{E(G)}$ with $y_{e_i}=1$ and $y_{f_i}=\frac{1}{2}$ for all $i \in [3]$.

Our goal is to generalize inequality \eqref{example OSI} to complete graphs of arbitrary size $n$ depending on the parity of $n$. First, assume that $n$ is even. Pick any vertex $v \in V(K_n)$ and consider the set of edges $F \coloneqq \delta(\{v\})$ incident to $v$. A $2$-edge-connected subgraph $H \subseteq K_n$ cannot contain only one edge of $F$. If $H$ contains $k \ge 2$ edges of $F$, $H$ also has to contain $\lceil \frac{k}{2} \rceil$ edges of $E(K_n)\setminus F$ in order to ensure that $H$ is $2$-edge-connected. Hence, if $H$ contains $k \ge 2$ edges of $\delta(\{v\})$, we have
\begin{align*}
	\chi^H(F)-\chi^H(E(K_n)\setminus F) \le \lfloor \frac{k}{2} \rfloor \le \lfloor\frac{n-1}{2}\rfloor.
\end{align*}
Thus, the inequality
\begin{align*}
    x(\delta(\{v\}))-x(E(K_n-v)) \le \frac{n-2}{2}
\end{align*}
defines a supporting hyperplane for $\TECSP(K_n)$ for $n$ even and an inequality of that type will be called an \textit{odd star inequality}, as $\delta(\{v\})$ induces a star with an odd number of edges.

Suppose now that $n \ge 5$ is odd. Let $v \in V(K_n)$ and $h=\{w_1,w_2\} \in E(K_n-v)$. Let $f \coloneqq \{v,w_1\} \in E$. Consider the inequality
\begin{align*}
	x(\delta(\{v\})\setminus \{f\})-x(E(K_n-v) \setminus \{h\}))-x_f \le \frac{n-3}{2}.
\end{align*}
Note that this inequality restricted to the edges $E(K_n-w_1)$ defines an odd star inequality for $\TECSP(K_{n-1})$, where $K_{n-1}$ is the complete graph on the vertices $V(K_n) \setminus \{w_1\}$. Similar as for $n$ even, one can see that this defines a supporting hyperplane for $\TECSP(K_n)$ if $n$ is odd and an inequality of that type will also be called an \textit{odd star inequality}. In fact, all odd star inequalities define facets as the following theorem shows.

\begin{figure}
    \begin{subfigure}[b]{0.45 \textwidth}
    	\centering
    	\begin{tikzpicture}[scale=1.3]
        \draw[white] (0,2) -- (0,-2);
        
        \coordinate (A) at (0,0);
        \coordinate (B) at (1.5,1);
        \coordinate (C) at (1.5,0);
        \coordinate (D) at (1.5,-1);
        
        \draw[blue, thick] (A) -- (B);
        \draw[blue, thick] (A) -- (C);
        \draw[blue, thick] (A) -- (D);
        \node[left] at (A) {$v\,$};
        \draw[red, bend left, thick] (B) to (C);
        \draw[red, bend left=50, thick] (B) to node[right] {} (D);
        \draw[red, bend left, thick] (C) to (D);
        \draw[fill=black] (A) circle (0.1);
        \draw[fill=black] (B) circle (0.1);
        \draw[fill=black] (C) circle (0.1);
        \draw[fill=black] (D) circle (0.1);

    \end{tikzpicture}
    \caption{$K_4$}
    \label{K4}
\end{subfigure}
\hfill
\begin{subfigure}[b]{0.45 \textwidth}
\centering
\begin{tikzpicture}[scale=1.3]
	\draw[white] (0,2) -- (0,-2);
	
    \coordinate (A) at (0,0);
    \coordinate (B) at (1.5,1.5);
    \coordinate (C) at (1.5,0.5);
    \coordinate (D) at (1.5,-0.5);
    \coordinate (E) at (1.5,-1.5);
    
    \draw[blue, thick] (A) -- (B);
    \draw[blue, thick] (A) -- (C);
    \draw[blue, thick] (A) -- (D);
    \draw[red, thick] (A) -- (E);
    \node[left] at (A) {$v\,$};
    \node[right] at (D) {\small$w_2$};
    \node[right] at (E) {\small$w_1$};
    \node at ($0.5*(D)+0.5*(E)$) {\small$h$\,};
    \node[below left] at ($0.5*(A)+0.5*(E)$) {\small$f$};
    \draw[red, bend left, thick] (B) to (C);
    \draw[red, bend left=60, thick] (B) to node[right] {} (D);
    \draw[red, bend left=60, thick] (C) to node[right] {} (E);
    \draw[red, bend left=60, thick] (B) to node[right] {} (E);
    \draw[red, bend left, thick] (C) to (D);
    \draw[bend left, thick, label=$h$] (D) to (E);
    \draw[fill=black] (A) circle (0.1);
    \draw[fill=black] (B) circle (0.1);
    \draw[fill=black] (C) circle (0.1);
    \draw[fill=black] (D) circle (0.1);
    \draw[fill=black] (E) circle (0.1);
    \end{tikzpicture}
    \caption{$K_5$}
    \label{K5}
\end{subfigure}
    \caption{Odd star inequality for complete graphs $K_4$ and $K_5$ both with right-hand side $1$. Blue edges and red edges have coefficients $+1$ and $-1$, respectively.}
    \label{Example K4 K5}
\end{figure}

\begin{theorem}
	Let $n \in \N$, $n \ge 4$.
	\begin{enumerate}[(i)]
		\item Let $n$ be even and $v \in V(K_n)$. Then the odd star inequality
		\begin{align*}
			x(\delta(\{v\}))-x(E(K_n-v)) \le \frac{n-2}{2}
		\end{align*}
		defines a facet of $\TECSP(K_n)$.
		\item Let $n$ be odd, $v \in V(K_n)$, and $h=\{w_1,w_2\} \in E(K_n-v)$. Let $f \coloneqq \{v,w_1\}$. Then the odd star inequality
		\begin{align*}
		x(\delta(\{v\})\setminus \{f\})-x(E(K_n-v) \setminus \{h\}))-x_f \le \frac{n-3}{2}
		\end{align*}
		defines a facet of $\TECSP(K_n)$.
	\end{enumerate}
\end{theorem}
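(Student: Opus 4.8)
The plan is to show, for each parity of $n$, that the face $\F$ defined by the odd star inequality has dimension $\dim(\TECSP(K_n))-1 = \vert \CP(K_n)\vert - 1 = \binom{n}{2}-1$; note that for $n\ge 4$ the complete graph $K_n$ has no nontrivial $2$-cuts, so every coparallel class is a singleton and $\dim(\TECSP(K_n)) = \binom{n}{2}$. Since $\textbf{0}\notin \F$, it suffices to exhibit $\binom{n}{2}-1$ linearly independent vectors in the linear subspace $U$ with $\aff(\F) = y+U$. As in the earlier facet proofs, such vectors arise as differences $\chi^{H_1}-\chi^{H_2}$ of incidence vectors of $2$-edge-connected subgraphs $H_1,H_2\subseteq K_n$ that both lie on $\F$, i.e.\ both attain equality in the odd star inequality.

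\textbf{The even case.}
Here the inequality is $x(\delta(\{v\})) - x(E(K_n-v)) \le \frac{n-2}{2}$, and a natural family of tight subgraphs consists of those $H$ using exactly $n-2$ edges of $\delta(\{v\})$ together with exactly $\frac{n-2}{2}$ edges of $E(K_n-v)$ forming a perfect matching on the $n-2$ endpoints (plus the two missing spokes contributing nothing), so that $H$ is a union of triangles through $v$; more generally one can take $k$ spokes matched by $\lceil k/2\rceil$ edges in $K_n-v$ with the slack balanced by deleting spokes. First I would produce, for each pair $\{i,j\}\subseteq V(K_n-v)$, a difference of two such tight subgraphs that toggles exactly the edge $\{i,j\}$ (keeping everything on $\F$): e.g.\ compare two "triangle-decomposition" subgraphs that differ by a local reconfiguration $v w_a w_b v,\ v w_c w_d v \leftrightarrow v w_a w_c v,\ v w_b w_d v$, which changes the matching edges but preserves both the spoke count and the matching-edge count. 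Such moves generate all of $\R^{E(K_n-v)}$; that gives $\binom{n-1}{2}$ independent vectors supported off the spokes. Then I would add $n-2$ more vectors toggling spokes: compare a subgraph with all $n-1$ spokes and $\lceil (n-1)/2\rceil = \frac{n}{2}$ cover edges against one with $n-2$ spokes and $\frac{n-2}{2}$ matching edges, iterating over which spoke is dropped; carefully chosen, these are independent from each other and from the first batch (project onto spoke coordinates to see a triangular pattern). Counting: $\binom{n-1}{2} + (n-2) = \binom{n}{2}-1$, as required.

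\textbf{The odd case and the main obstacle.}
For $n$ odd, the stated inequality restricts on $E(K_{n-1})$ (with $K_{n-1} = K_n - w_1$) to the even-case odd star inequality for $\TECSP(K_{n-1})$, which by part (i) is facet-defining there, i.e.\ of dimension $\binom{n-1}{2}-1$. Lifting this to $K_n$ costs exactly the $n-1$ new coordinates indexed by edges incident to $w_1$, so I need $\binom{n-1}{2}-1$ vectors living on $E(K_{n-1})$ (obtained by applying part (i) and noting that subgraphs on $\F\cap\{x_{\text{edges at }w_1}=0\}$ that are $2$-edge-connected in $K_{n-1}$ are also in $\F$) plus $n-1$ further independent vectors that genuinely use edges at $w_1$. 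For the latter, I would build tight subgraphs of $K_n$ that include $w_1$: since $f=\{v,w_1\}$ and $h=\{w_1,w_2\}$ have coefficient $-1$, a subgraph using the path $v\,w_1\,w_2$ together with a balanced even-case-type configuration on the remaining $n-1$ vertices stays tight, and toggling, for each $u\notin\{v,w_1\}$, a triangle $v\,w_1\,u$ or the edge $\{w_1,u\}$ against a compensating change elsewhere yields the missing directions; projecting onto the $w_1$-coordinates should again give a triangular (hence full-rank) pattern. The part I expect to be genuinely delicate is \emph{bookkeeping the slack}: every reconfiguration move must simultaneously preserve the spoke-count contribution $x(\delta(\{v\}))$ and the $E(K_n-v)$-count so as not to leave $\F$, and when an odd number of spokes is used one has a forced ``extra'' cover edge whose placement interacts with the special edges $f,h$ in the odd case. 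Managing these parity/slack constraints while still reaching all $\binom{n}{2}-1$ coordinate directions—especially the interplay between the spoke-toggles and the special edges $f$ and $h$—is the crux; everything else is the same triangular-matrix rank argument used in the proofs of \Cref{boxineq1} and \Cref{CCI Thm}.
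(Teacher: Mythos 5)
Your overall strategy coincides with the paper's: both cases are handled by exhibiting $\binom{n}{2}-1$ linearly independent differences of incidence vectors of tight $2$-edge-connected subgraphs, and your reduction of the odd case to the even case via $K_n-w_1$ is exactly what the paper does. However, there is a concrete error in your even-case count. If $H$ is tight for $x(\delta(\{v\}))-x(E(K_n-v))\le \frac{n-2}{2}$ and uses $k$ spokes and $m$ edges of $K_n-v$, then $k-m=\frac{n-2}{2}$; hence any difference $\chi^{H_1}-\chi^{H_2}$ of tight vectors that is supported on $E(K_n-v)$ has $k_1=k_2$, so $m_1=m_2$, i.e.\ its coordinate sum over $E(K_n-v)$ is zero. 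Your reconfiguration moves therefore all lie in the hyperplane $\{x\in\R^{E(K_n-v)}\mid \sum_e x_e=0\}$ and can span at most $\binom{n-1}{2}-1$ dimensions, not $\binom{n-1}{2}$; likewise a move that ``toggles exactly the edge $\{i,j\}$'' off the spokes is impossible, since it changes the left-hand side by $1$. Consequently your budget of $n-2$ spoke-involving vectors is one short: you need $n-1$ of them. The paper gets precisely this split, namely $\binom{n-1}{2}-1$ vectors of the form $\chi^{\{f_1\}}-\chi^{\{f_2\}}$ for adjacent $f_1,f_2\in E(K_n-v)$, plus $n-1$ vectors of the form $\chi^{\{\{v,w\}\}}+\chi^{\{f'\}}$, one for each $w\ne v$, and checks the two spans meet only in $\mathbf 0$.

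For the odd case your outline matches the paper's architecture ($\binom{n-1}{2}-1$ directions from the facet of $\TECSP(K_n-w_1)$ lifted by zero, plus $n-1$ directions using edges at $w_1$), but you explicitly defer the part you yourself identify as the crux. That deferral is not innocuous: the paper needs three genuinely different ad hoc constructions there --- $n-3$ vectors of the form $\chi^{\{\{w_1,w\}\}}+\chi^{\{h\}}-\chi^{\{\{w_2,w\}\}}$, then the vector $\chi^{\{f\}}+\chi^{\{h\}}+\chi^{\{\{v,w_2\}\}}$ to escape the subspace $\{x_f=0\}$, and finally one more vector leaving the subspace $\{x_h-x(\delta(\{w_1\})\setminus\{h\})=0\}$ --- each requiring a bespoke pair of tight subgraphs whose existence depends on the signs of $f$ and $h$ in the inequality. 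A generic appeal to a ``triangular pattern on the $w_1$-coordinates'' does not substitute for these constructions, so as written the proof of part (ii) is incomplete.
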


\begin{proof}
	Since $K_n$ has neither $1$- nor $2$-cuts for $n \ge 4$, $\TECSP(K_n)$ is full dimensional. Let ${\F \subseteq \TECSP(K_n)}$ be the face defined by the considered odd star inequality and let ${U \subseteq \R^{E(K_n)}}$ be the linear subspace with $\aff(\F)=y+U$ for $y \in \F$. We have to show that $\dim(U)=\vert E(K_n) \vert -1$. Since $\chi^{K_n} \notin \F$, the considered odd star inequality defines a proper face of $\TECSP(G)$, and we have $\dim(U) \le \vert E(K_n) \vert-1$.

\smallskip
\noindent \textit{Case (i)}. Suppose $n$ is even. In the following, we construct two different kind of vectors contained in $U$ that span a subspace of dimension at least $\vert E(K_n) \vert-1$. Let ${f_1 \neq f_2 \in E(K_n-v)}$ with $f_1 \cap f_2=\{w\}$ for some $w \in V(K_n) \setminus \{v\}$. Let $u_1, \ldots, u_{n-2}$ be the vertices $V(K_n)\setminus\{v,w\}$. Let $F' \coloneqq \{\{u_1,u_2\},\{u_3,u_4\},\ldots,\{u_{n-3},u_{n-2}\}\} \subseteq E(K_n)$. The three subgraphs
\begin{align*}
    H& \coloneqq (V(K_n)\setminus \{w\},(\delta(\{v\})\setminus \{\{v,w\}\})\cup F'), \\
    H_1& \coloneqq H+w+\{v,w\}+f_1= (V(K_n),E(H) \cup \{\{v,w\},f_1\}) \text{, and} \\
    H_2& \coloneqq H+w+\{v,w\}+f_2= (V(K_n),E(H)\cup \{\{v,w\},f_2\})
\end{align*}
     are $2$-edge-connected. Furthermore, we have
\begin{align*}
    \chi^{H}(\delta(\{v\}))-\chi^{H}(E(K_n-v))=(n-2)-\frac{n-2}{2}=\frac{n-2}{2},
\end{align*}
which shows $\chi^H \in \F$. Since $H_1$ and $H_2$ both contain one edge more of $\delta(\{v\})$ and $E(K_n-v)$ than $H$, we also have $\chi^{H_1},\chi^{H_2} \in \F$. Hence, $\chi^{\{f_1\}}-\chi^{\{f_2\}}=\chi^{H_1}-\chi^{H_2} \in U$ for any two adjacent edges $f_1,f_2 \in E(K_n-v)$, which leads to the first type of vectors we are considering: Let $f'_1,\ldots,f'_k$, $k\coloneqq \vert E(K_n) \vert-n+1$, denote the edges of $K_n-v$, and we assume w.l.o.g. that the labeling is chosen such that $f'_i$ and $f'_{i+1}$ are adjacent for all $i \in [k-1]$ (which is trivial to exist). We have $S_1 \coloneqq
\{\chi^{\{f'_i\}}-\chi^{\{f'_{i+1}\}} \mid i \in [k-1]\} \subseteq U$. Since all vectors contained in $S_1$ are linearly independent, $S_1$ spans a linear subspace $U_1 \subseteq U$ of dimension $k-1=\vert E(K_n) \vert-n$. We remark that $U_1 \subseteq \{x \in \R^{E(K_n)} \mid x_{\{v,w\}}=0 \text{ for } w \in V(K_n) \setminus \{v\}\}.$ 

We also have $\chi^{\{\{v,w\}\}}+\chi^{\{f_1\}}=\chi^{H_1}-\chi^{H} \in U$, which yields the second type of vectors we are considering. For each $w \in V(K_n) \setminus \{v\}$ pick one vector $y^{(w)} \in \R^{E(K_n)}$ of that type with $y^{(w)}_{\{v,w\}}=1$, i.e., $y^{(w)}=\chi^{\{\{v,w\}\}}+\chi^{\{f'\}}$ for some $f' \in E(K_n-v)$, and let $U_2 \subseteq U$ be the span of the vectors $S_2 \coloneqq \{y^{(w)} \mid{w \in V(K_n)\setminus \{v\}}\}$. Since all the vectors of $S_2$ are linearly independent, we have $\dim(U_2) =n-1$. Since the only vector in $\chi \in U_2$ with $\chi_{\{v,w\}}=0$ for all $w \in V(K_n)\setminus \{v\}$ is the all zeroes vector $\textbf{0}$, we see that $U_1 \cap U_2=\{\textbf{0}\}$. This implies $\dim(U) \ge \dim(U_1) + \dim(U_2)=\vert E(K_n)\vert-n+n-1=\vert E(K_n)\vert-1$, and this shows \textit{(i)}.

\smallskip
\noindent \textit{Case (ii)}. Suppose now that $n$ is odd. As in the proof of \textit{(i)}, we again construct sufficiently many linearly independent vectors contained in $U$.
		
The first class of vectors is constructed as follows. The graph $K \coloneqq K_n-w_1 \subseteq K_n$ is a complete graph on $n-1$ many vertices. As already mentioned above, we can consider the odd star inequality for $K$:
\begin{align*}
    x(\delta(\{v\}))-x(E(K-v)) \le \frac{n-3}{2}.
\end{align*}
Let $\F' \subseteq \TECSP(K)$ denote the corresponding face. Then it is easy to see that for $x \in \F'$, we have $\tilde{x} \in \F$, where $\tilde{x}$ is $x$ lifted to $\R^{E(K_n)}$ by adding zeroes at the missing coordinates. By the proof of \textit{(i)} w.r.t. $K$, this implies that for each pair of adjacent edges $f_1 \neq f_2 \in E(K_n-v-w_1)$ we have $\chi^{\{f_1\}}-\chi^{\{f_2\}} \in U$, and for each vertex $w \in V(K_n) \setminus \{v,w_1\}$ we have $\chi^{\{\{v,w\}\}}+\chi^{\{f'\}} \in U$ for some $f' \in E(K_n-v-w_1)$. Again by the proof of \textit{(i)}, the span of these vectors, denoted by $U'_1 \subseteq U$, has dimension $\vert E(K_n-w_1) \vert-1=\vert E(K_n)\vert-n$. We remark that $U'_1 \subseteq \{x \in \R^{E(K_n)} \mid x_{f'}=0 \text{ for all } f' \in \delta(\{w_1\})\}$.

For the second type of vectors, let $f'=\{w_1,w\} \in \delta(\{w_1\})\setminus\{f,h\}$ for some $w \in V(K_n) \setminus \{v,w_1,w_2\}$. Let $u_1,\ldots,u_{n-4}$ be the vertices $V(K_n) \setminus \{v,w_1,w_2,w\}$ and let $H_1^{(w)} \subseteq K_n$ be the $2$-edge-connected subgraph with vertex set $V(H_1^{(w)})=V(K_n)\setminus\{w_1,u_{n-4}\}$ and edge set \begin{align*}
    E(H_1^{(w)})=(\delta(\{v\})\setminus\{f,\{v,u_{n-4}\}\}) \cup \{\{w,w_2\},\{u_1,u_2\},\ldots,\{u_{n-6},u_{n-5}\}\} \subseteq E(K_n-w_1).
\end{align*}
Consider the graph $H_2^{(w)}$ with vertices $V(H_2^{(w)})=V(H_1^{(w)}) \cup \{w_1\}$ and edges of $E(H_2^{(w)})=E(H_1^{(w)})\setminus \{\{w_2,w\}\}\cup\{h,f'\}$. It is immediate that $\chi^{H_1^{(w)}},\chi^{H_2^{(w)}} \in \F$. This means ${\chi^{H_2^{(w)}}-\chi^{H_1^{(w)}}}$ ${=\chi^{\{f'\}}+\chi^{\{h\}}-\chi^{\{\{w_2,w\}\}} \in U}$ for all $f'=\{w_1,w\}\in \delta(\{w_1\})\setminus \{f,h\}$. Hence, $S'_2 \coloneqq \{\chi^{\{f'\}}+\chi^{\{h\}}-\chi^{\{\{w_2,w\}\}} \mid f'=\{w_1,w\} \in \delta(\{w_1\}) \setminus \{f,h\}\} \subseteq U$. Projecting the vectors in $S'_2$ onto the coordinates $\delta(\{w_1\})\setminus \{f,h\}$ yields the standard unit vectors of $\R^{\delta(\{w_1\})\setminus \{f,h\}}$. Hence, $S'_2$ spans an $(n-3)$-dimensional linear subspace $U'_2$. Clearly, the only vector $\chi \in U'_2$ with $\chi_{\bar{f}}=0$ for all $\bar{f} \in \delta(\{w_1\})$ is the all-zeros vector $\textbf{0}$, which means $U'_1 \cap U'_2=\{\textbf{0}\}$. This implies $\dim(U) \ge \dim(U'_1)+\dim(U'_2) = \vert E(K_n) \vert -3$.

For the last two missing vectors, let $u_1,\ldots,u_{n-3}$ denote the vertices $V(K_n)\setminus\{v,w_1,w_2\}$. Let $H_1' \subseteq K_n$ be the $2$-edge-connected subgraph with vertices $V(H_1')=V(K_n) \setminus \{w_1,w_2\}$ and edges $E(H_1')=\delta(\{v\})\setminus\{f,\{v,w_2\}\}\cup\{\{u_1,u_2\},\ldots, \{u_{n-4},u_{n-3}\}\}$. Let the subgraph $H_2'$ contain all vertices $V(K_n)$, all edges of $H_1'$, and additionally the edges $f$, $h$, and $\{v,w_2\}$. $H_2'$ is $2$-edge-connected, too, and we have $\chi^{H_1'},\chi^{H_2'} \in \F$. Hence, $\chi' \coloneqq \chi^{H_2'}-\chi^{H_1'}=\chi^{\{f\}}+\chi^{\{h\}}+\chi^{\{\{v,w_2\}\}} \in U$. Since $U'_1,U'_2 \subseteq \{x \in \R^{E(G)} \mid x_f=0\}$, but $\chi'_f=1$, the dimension of $U$ is at least $\vert E(K_n) \vert -2$.

For the last vector, let $H_3' \subseteq K_n$ be the $2$-edge-connected subgraph with vertex set $V(H_3')=V(K_n)$ and edge set
\begin{align*}
    E(H_3')=(\delta(\{v\})\setminus \{f\} )\cup \{h,\{w_1,u_1\},\{w_1,u_2\},\{u_3,u_4\},\ldots,\{u_{n-4},u_{n-3}\}\}.
\end{align*}
Furthermore, let $H_4'$ be the subgraph, with vertex set $V(H_4')=V(K_n)\setminus\{w_1,w_2\}$, and edge set
\begin{align*}
    E(H_4')=(\delta(\{v\})\setminus\{f,\{v,w_2\}) \cup \{\{u_1,u_2\},\ldots,\{u_{n-4},u_{n-3}\}\}.
\end{align*}
It is easy to check that $\chi^{H_3'},\chi^{H_4'} \in \F$. Hence, $\chi^{H_3'}-\chi^{H_4'}=\chi^{\{h\}}+\chi^{\{\{w_1,u_1\}\}}+\chi^{\{\{w_1,u_2\}\}}+\chi^{\{v,w_2\}}-\chi^{\{\{u_1,u_2\}\}} \in U$. All of the previously considered vectors are contained in the linear subspace $\{x \in \R^{E(G)} \mid x_h-x(\delta(\{w_1\}) \setminus \{h\})=0\}$, which is not true for $\chi^{H_3'}-\chi^{H_4'}$. Hence, we have $\dim(U) \ge \vert E(K_n)\vert -1$, which finishes the proof. \qedhere
\end{proof}

\section{Computing 2-edge-connected subgraphs}
\label{experiments}

We can now discuss how to transfer the theoretical results to an implementation for finding a maximum weight $2$-edge-connected subgraph. As motivated in the introduction, and as done throughout the paper, we assume that the given graph $G$ is $2$-edge-connected. 

\subsection{Basic model}

\label{subsection basic model}

By \Cref{lattice points}, the following defines an integer linear program (ILP) to find a maximum weight $2$-edge-connected subgraph $H \subseteq G$ of a $2$-edge-connected graph $G$ with edge weights $w \colon E(G) \to \R$:

\begin{center}
\begin{tabular}{r c l l}
	$\displaystyle \max \sum_{e \in E(G)} w(e) \cdot x_e$, & & & such that \\
	$x_f-x(\delta(S) \setminus \{f\})$ & $\le$ & $0$, & for all $\emptyset \subsetneq S \subsetneq V(G)$, $f \in \delta(S)$,\\
	$2x_{e_1}-x(\delta(S))+2x_{e_2}$ & $\le$ & $2$, & for all $\emptyset \subsetneq S \subsetneq V(G)$, $e_1 \in E(G[S]),e_2 \in E(G[V \setminus S])$,\\
 $x$ & $\in$ & $\{0,1\}^{E(G)}$. &
\end{tabular}
\end{center}

Such an ILP is usually solved with a \emph{Branch-and-Cut} algorithm: We replace the condition $x \in \{0,1\}^{E(G)}$ by $x \in [0,1]^{E(G)}$ to obtain the easier to solve \emph{LP-relaxation}. Moreover, we start without any other inequality of the ILP. Then we use a linear program (LP) solver, to get a solution $\bar{x} \in [0,1]^{E(G)}$ of this LP. Now we have to solve the \emph{separation problem}: We have to check if $\bar{x}$ satisfies all inequalities of our full LP-relaxation, or find inequalities (at least one) violated by $\bar{x}$. In the latter case, we will add (some of) these inequalities to our current model. We repeat this process of solving and separating (cutting) until there are no more violated inequalities. In that way, we obtain a solution of the full LP-relaxation of our ILP. If $\bar{x} \in \{0,1\}$, we found an optimal solution for our ILP. Otherwise, there is at least one coordinate where $\bar{x}$ has a fractional value. In that case, we split the problem into two subproblems, recursively solving their linear program, with the variable fixing $\bar{x}_e=0$ and $\bar{x}_e=1$, respectively.

To use this algorithm, we have to solve the separation problem for the asymmetric cut inequalities and the connectivity cut inequalities: decide whether a given point $\bar{x} \in [0,1]^{E(G)}$ satisfies all asymmetric cut inequalities (connectivity cut inequalities, resp.) and if not, identify at least one violated inequality. For the asymmetric cut inequalities this can be done as follows. For each edge $e=\{s,t\} \in E(G)$, compute a minimum $s$-$t$-cut $F_e \subseteq E(G)\setminus \{e\}$ in the graph $G-e$ with edge weights $w'(e') \coloneqq \bar{x}_{e'}$ for all $e' \in E(G) \setminus \{e\}$. Then, $F_e \cup \{e\}$ is an $s$-$t$-cut in $G$ and
\begin{align*}
    x_e-x(F_e) \le 0
\end{align*}
defines an asymmetric cut inequality. If $\bar{x}$ violates this inequality, we identified a maximally violated asymmetric constraint among these with positive $x_e$-coefficient. If this process does not yield a violated constraint for any edge $e \in E(G)$, all asymmetric cut inequalities are satisfied.

For the connectivity cut inequalities we can proceed similarly: For every pair of non-adjacent edges $e_1,e_2 \in E(G)$, compute a minimum $s$-$t$-cut $F \subseteq E(G)$, where $s$ is a vertex incident to $e_1$ and $t$ is a vertex incident to $e_2$ with edge weights $w'(e')\coloneqq x_{e'}$ for all $e' \in E(G) \setminus \{e_1,e_2\}$ and $w'(e_1) \coloneqq w'(e_2) \coloneqq 2\vert E(G) \vert$. By the choice of the weights we have $e_1,e_2 \notin F$ and therefore,
\begin{align*}
    2x_{e_1}-x(F)+2x_{e_2} \le 2
\end{align*}
defines a connectivity cut inequality of $G$. Again, either this yields a most violated connectivity cut constraint for some $e_1$, $e_2$, or all such constraints are satisfied.

\subsection{Strengthening the basic model}


We can strengthen the above ILP by adding the coparallel class inequalities, which also requires us to have a separation routine. Again, let $\bar{x} \in [0,1]^{E(G)}$ be the current solution, possibly not satisfying all coparallel class inequalities. For each coparallel class $C \in \CP(G)$, let $G_1,\ldots,G_r$ denote the connected components of $G-C$ that contain edges. If $r \ge 3$, let $e_i \coloneqq \mathrm{arg\,max}_{e \in E(G_i)} \{\bar{x}_e\}$, for $i \in [r]$, breaking ties arbitrarily. Then 
\begin{align*}
    \sum_{i=1}^r x_{e_i} - (r-1)x_f \le 1
\end{align*}
for $f \in C$ defines a coparallel class inequality of $G$. If $\bar{x}$ violates any coparallel class inequality, then also one of these constructed.

If $G$ is a complete graph on $n \ge 4$ vertices, we can strengthen our model by the odd star inequalities. There are no coparallel class inequalities, but since there are only $n$ and $n(n-1)(n-2)$ odd star inequalities for $n$ even and $n$ odd, respectively, we do not need a separation routine, but can add them directly to our model in the beginning. 

\subsection{Experimental setup}

In order to test whether the coparallel class inequalities are (can be) beneficial in practice, we want to use graphs that have rather large coparallel classes. To this end, we construct two different classes of graphs. The first class $\mathcal{G}(n,k,\alpha)$ are sparsified $k$-nearest-neighbor graphs generated as follows. We pick $n$ random points in the plane which are the vertices of the graph. For each point we connect it with its $k$ nearest neighbors and check if we obtain a $2$-edge-connected graph. Afterwards, we pick $\alpha$-percent of the edges of the so far constructed graph randomly. In a random order we delete each of the chosen edges if the deletion still leaves a $2$-edge-connected graph, otherwise we just continue with the next edge. We then pick a random edge weight for the resulting graph by assigning integer values between $-5$ and $6$ uniformly distributed and independently for each edge. For the experiments we choose $n=150$, $k \in \{4,5\}$, and $\alpha \in \{70\%,80\%,90\%\}$. For each of the possible combinations, we generate $15$ graphs.

For the second type of graphs, which we will call $K_n$-cycles, we pick a number $\ell$. We then generate $\ell$ complete graphs $K^{(1)}_{n_1}, \ldots, K^{(\ell)}_{n_\ell}$ of size $n_i$, where $n_i$ is an integer between $3$ and $7$ chosen independently for each $i$ uniformly at random. We then partition the $\ell$ graphs into groups of size $3$ to $7$. Let $G_1, \ldots, G_k$ be the graphs of one group. We consider the graph $G_1 \cup \ldots \cup G_k$ and for $i \in [k]$ we add edges $e_i=\{v_i,w_i\}$, where $v_i \in V(G_i)$ and $w_i \in V(G_{i+1 \bmod k})$. This yields a graph where the new added edges form a coparallel class and the connected components of this graph without the coparallel class are exactly the graphs of this group. Now we have a collection of graphs $G'_1,\ldots,G'_{\ell'}$ with $\ell' < \ell$. For these graphs we repeat this procedure above until there is only one graph left. Finally, for the resulting graph we again pick random edge weights by assigning integer values between $-5$ and $6$ uniformly distributed and independently for each edge. For the experiments we chose $\ell=10,\ldots,20$ and for each $\ell$ we generated $15$ graphs.

In all above cases, the parameterizations, and in particular the edge weight range, was picked after pilot studies to avoid trivial instances where the optimal solution is the empty graph, all of $G$, or always spanning.

To test whether the odd star inequalities improve the running time of our basic model, we test the model on complete graphs where the edge weights are integer values between $-10$ and $3$, again uniformly and independently distributed for each edge at random. We generate $10$ complete graphs for each number of vertices from $15$ up to $29$.

For each of the considered ILPs (both the basic and the strengthend model), we consider two different variants, one where we separate only on integer solutions, and one where we separate also on fractional solutions. For the sparsified $k$-nearest-neighbor graphs and the $K_n$-cycles we use a time limit of two hours, whereas for the complete graphs the time limit is one hour. The experiments were run on a Xeon E5-2430, 2.5 GHz, under Debian sid, using Gurobi 7.0.2 as the ILP solver, with separation code written in python 3.11.5 using the graphtools library.

\subsection{Experimental experience}

\begin{figure}[p]
    \centering

\begin{subfigure}[b]{\textwidth}
    \centering
    \begin{tabular}{c|c|c}
        & basic model & strengthend model \\ \hline
        only integer separation & \begin{tikzpicture}
            \draw[blue] circle (4pt);
        \end{tikzpicture}  & \begin{tikzpicture}
            \fill[red] circle (4pt);
        \end{tikzpicture} \\ \hline
        also fractional separation & \textcolor{green}{$\square$} & \textcolor{orange}{$\blacksquare$} \\
    \end{tabular}

    \vspace{0.5cm}
\end{subfigure}

\begin{subfigure}[b]{0.49\textwidth}
\centering
    \begin{tikzpicture}[scale=0.37, font=\Large]
 \pgfplotsset{scale only axis,xmin=3, xmax=11, xtick={3,4,5,6,7,8,9,10,11},y axis style/.style={yticklabel style=#1, ylabel style=#1, y axis line style=#1, ytick style=#1}}\begin{axis}[axis y line*=left,ymin=0, ymax=7200,xlabel={$\lfloor\dim(P)/10\rfloor$},ylabel={Running time [in s]},width=17cm,height=15cm]
\addplot[color=blue,mark=o,mark options={scale=2.5}] coordinates{
(3,981.6110424995422) 
(4,1649.3286390304565) 
(5,1889.5907509326935) 
(6,1967.2936289310455) 
(7,2226.237018465996) 
(8,5229.229750871658) 
(9,7200.0) 
(10,7200.0) 
(11,5365.912171006203) 
};
\addplot[color=red,mark=otimes*,mark options={scale=2.5}] coordinates{
(3,912.5063424110413) 
(4,1666.8736114501953) 
(5,1535.807718038559) 
(6,2447.823156952858) 
(7,2251.090502023697) 
(8,2706.396969795227) 
(9,7200.0) 
(10,7200.0) 
(11,5427.442147016525) 
};
\addplot[color=green,mark=square,mark options={scale=2}] coordinates{
(3,1208.3634051084518) 
(4,2296.293756842613) 
(5,2030.8957397937775) 
(6,2699.4376311302185) 
(7,6025.284833431244) 
(8,7200.0) 
(9,7200.0) 
(10,7200.0) 
(11,7200.0) 
};
\addplot[color=orange,mark=square*,mark options={scale=2}] coordinates{
(3,959.8334903717041) 
(4,1969.9949469566345) 
(5,1239.8932600021362) 
(6,2634.024837613106) 
(7,6266.166312932968) 
(8,7200.0) 
(9,7200.0) 
(10,7200.0) 
(11,4327.292586565018) 
};
\end{axis}
\begin{axis}[axis y line*=right,axis x line=none,ymin=0, ymax=20,ylabel={\Large Number of tested graphs}, width=17cm,height=15cm] \addplot[mark=asterisk, mark options={scale=3},color=gray!50!white] coordinates{
(3,4) 
(4,12) 
(5,15) 
(6,20) 
(7,14) 
(8,9) 
(9,6) 
(10,8) 
(11,2) 
}; \end{axis} \end{tikzpicture} 
\caption{Sparsified $k$-nearest-neighbor graphs.}
\end{subfigure}
    \begin{subfigure}[b]{0.49\textwidth}
    \centering
        \begin{tikzpicture}[scale=0.38, font = \Large]
 \pgfplotsset{scale only axis,xmin=5, xmax=15, xtick={5,6,7,8,9,10,11,12,13,14,15},y axis style/.style={yticklabel style=#1, ylabel style=#1, y axis line style=#1, ytick style=#1}}\begin{axis}[axis y line*=left,ymin=0, ymax=7200,xlabel={$\lfloor \dim(P)/10 \rfloor$},ylabel={Running time [in s]},width=17cm,height=15cm]
\addplot[color=blue,mark=o,mark options={scale=2.5}] coordinates{
(5,100.34884405136108) 
(6,97.52113008499146) 
(7,233.54466199874878) 
(8,366.9129011631012) 
(9,954.0645878314972) 
(10,868.4838808774948) 
(11,1413.4851688146591) 
(12,7200.0) 
(13,7200.0) 
(14,7200.0) 
(15,7200.0) 
};
\addplot[color=red,mark=otimes*,mark options={scale=2.5}] coordinates{
(5,133.26454401016235) 
(6,100.15085792541504) 
(7,153.32443189620972) 
(8,681.5879878997803) 
(9,765.1414339542389) 
(10,1030.6407705545425) 
(11,2184.694535970688) 
(12,7200.0) 
(13,7200.0) 
(14,7200.0) 
(15,2926.669333934784) 
};
\addplot[color=green,mark=square,mark options={scale=2}] coordinates{
(5,100.6659688949585) 
(6,75.88252210617065) 
(7,151.0168809890747) 
(8,344.58116698265076) 
(9,561.9778850078583) 
(10,1648.7719041109085) 
(11,1661.6978088617325) 
(12,7200.0) 
(13,4495.392368078232) 
(14,7200.0) 
(15,7200.0) 
};
\addplot[color=orange,mark=square*,mark options={scale=2}] coordinates{
(5,96.56818699836731) 
(6,71.22238183021545) 
(7,145.06829810142517) 
(8,714.5550270080566) 
(9,658.940113067627) 
(10,917.5015355348587) 
(11,1332.3653559684753) 
(12,5706.174446582794) 
(13,2300.689195871353) 
(14,3808.022513985634) 
(15,777.7016031742096) 
};
\end{axis}
\begin{axis}[axis y line*=right,axis x line=none,ymin=0, ymax=28,ylabel={\Large Number of tested graphs}, width=17cm,height=15cm] \addplot[mark=asterisk, mark options={scale=3},color=gray!50!white] coordinates{
(5,11) 
(6,11) 
(7,25) 
(8,15) 
(9,27) 
(10,28) 
(11,18) 
(12,18) 
(13,9) 
(14,2) 
(15,1) 
}; \end{axis} \end{tikzpicture} 
    \caption{$K_n$-cycles.}
    \end{subfigure}

\hspace{1cm}

\begin{subfigure}[b]{0.49\textwidth}
    \centering
\begin{tikzpicture}[scale=0.38, font=\Large]
 \pgfplotsset{scale only axis,xmin=16, xmax=28, xtick={16,18,20,22,24,26,28,30,32},y axis style/.style={yticklabel style=#1, ylabel style=#1, y axis line style=#1, ytick style=#1}}\begin{axis}[
 ymin=0, ymax=3600,xlabel={n},ylabel={Running time [in s]},width=17cm,height=15cm]
\addplot[color=blue,mark=o,mark options={scale=2.5}] coordinates{
(2,3600.0) 
(4,3600.0) 
(6,3600.0) 
(8,3600.0) 
(10,3600.0) 
(12,3600.0) 
(14,3600.0) 
(16,305.99090909957886) 
(18,607.5977159738541) 
(20,276.9848231077194) 
(22,1612.9819685220718) 
(24,1210.2691000699997) 
(26,2193.742634534836) 
(28,3600.0) 
(30,3600.0) 
(32,3600.0) 
};
\addplot[color=red,mark=otimes*,mark options={scale=2.5}] coordinates{
(2,3600.0) 
(4,3600.0) 
(6,3600.0) 
(8,3600.0) 
(10,3600.0) 
(12,3600.0) 
(14,3600.0) 
(16,397.2642389535904) 
(18,862.5308994054794) 
(20,262.70392298698425) 
(22,1334.003366947174) 
(24,1164.7212285995483) 
(26,2119.346780061722) 
(28,3682.1455850601196) 
(30,3600.0) 
(32,3600.0) 
};
\addplot[color=green,mark=square,mark options={scale=2}] coordinates{
(2,3600.0) 
(4,3600.0) 
(6,3600.0) 
(8,3600.0) 
(10,3600.0) 
(12,3600.0) 
(14,3600.0) 
(16,292.96745789051056) 
(18,648.1881295442581) 
(20,232.4930144548416) 
(22,1709.2907639741898) 
(24,1207.521891951561) 
(26,2131.6179959774017) 
(28,3600.0) 
(30,3600.0) 
(32,3600.0) 
};
\addplot[color=orange,mark=square*,mark options={scale=2}] coordinates{
(2,3600.0) 
(4,3600.0) 
(6,3600.0) 
(8,3600.0) 
(10,3600.0) 
(12,3600.0) 
(14,3600.0) 
(16,320.98738861083984) 
(18,867.6261080503464) 
(20,248.60703313350677) 
(22,1828.491159439087) 
(24,1221.5006334781647) 
(26,2181.5736219882965) 
(28,3600.0) 
(30,3600.0) 
(32,3600.0) 
};
\end{axis}

\end{tikzpicture} 
\caption{$K_n$, $n$ even.}
\end{subfigure}
\begin{subfigure}[b]{0.49\textwidth}
\begin{tikzpicture}[scale=0.38, font=\Large]
\pgfplotsset{scale only axis,xmin=15, xmax=29, xtick={15,17,19,21,23,25,27,29},y axis style/.style={yticklabel style=#1, ylabel style=#1, y axis line style=#1, ytick style=#1}}\begin{axis}[
ymin=0, ymax=3600,xlabel={n},ylabel={Running time [in s]},width=17cm,height=15cm]
\addplot[color=blue,mark=o,mark options={scale=2.5}] coordinates{
(1,3600.0) 
(3,3600.0) 
(5,3600.0) 
(7,3600.0) 
(9,3600.0) 
(11,3600.0) 
(13,3600.0) 
(15,175.89331603050232) 
(17,345.62579452991486) 
(19,566.9576070308685) 
(21,436.66851687431335) 
(23,811.2936319112778) 
(25,1706.841698050499) 
(27,2546.4372069835663) 
(29,3600.0) 
(31,3600.0) 
(33,3600.0) 
};
\addplot[color=red,mark=otimes*,mark options={scale=2.5}] coordinates{
(1,3600.0) 
(3,3600.0) 
(5,3600.0) 
(7,3600.0) 
(9,3600.0) 
(11,3600.0) 
(13,3600.0) 
(15,239.50918316841125) 
(17,587.0141520500183) 
(19,798.3033695220947) 
(21,413.74139392375946) 
(23,779.9986544847488) 
(25,1549.9556695222855) 
(27,2642.045290350914) 
(29,3600.0) 
(31,3600.0) 
(33,3600.0) 
};
\addplot[color=green,mark=square,mark options={scale=2}] coordinates{
(1,3600.0) 
(3,3600.0) 
(5,3600.0) 
(7,3600.0) 
(9,3600.0) 
(11,3600.0) 
(13,3600.0) 
(15,179.7943775653839) 
(17,348.53025805950165) 
(19,606.2944670915604) 
(21,415.26660203933716) 
(23,865.8873860836029) 
(25,1691.9803550243378) 
(27,2604.2976915836334) 
(29,3600.0) 
(31,3600.0) 
(33,3600.0) 
};
\addplot[color=orange,mark=square*,mark options={scale=2}] coordinates{
(1,3600.0) 
(3,3600.0) 
(5,3600.0) 
(7,3600.0) 
(9,3600.0) 
(11,3600.0) 
(13,3600.0) 
(15,242.34506905078888) 
(17,596.2224370241165) 
(19,785.6255999803543) 
(21,404.6864905357361) 
(23,810.7244905233383) 
(25,1572.3353599309921) 
(27,2661.3386110067368) 
(29,3600.0) 
(31,3600.0) 
(33,3600.0) 
};
\end{axis}

\end{tikzpicture}
\caption{$K_n$, $n$ odd.}
\end{subfigure}
    \caption{Median running times. We only show data points if more than half of the aggregated instances terminated within the time limit. For the sparsified $k$-nearest-neighbor graphs and the $K_n$-cycles, we took the median of the running time of each graph $G$ with $\lfloor\dim(\TECSP(G)) / 10 \rfloor$. The gray $\ast$-line shows how many different graphs were tested in each data point, where the scale for this is the axis on the right.}
    \label{experimental results}
\end{figure}

An overview of the experimental results can be seen in \Cref{experimental results}. For each class of graphs, we investigate four different algorithms with the options of either using the basic model or the strengthened model and either with or without performing fractional separation.

On the sparsified $k$-nearest-neighbor graphs, allowing fractional separation seems to increase the running time, especially if $60 \le \dim(\TECSP(G)) < 90$. If one allows only integer separation, the strengthend model can be a lot faster as one can see in the data points $\lfloor \dim(\TECSP(G))/10 \rfloor =7,8$, but it is not always beneficial.



For the $K_n$-cycles with $\dim(\TECSP(G)) < 80$, there is no significant difference in running time between any variants. If $80 \le \dim(\TECSP(G)) < 90$, the strengthened model has a higher running time than the basic model independent of allowing fractional separation. It seems that the coparallel class inequalities are not worthwhile yet. For $\dim(\TECSP(G)) \ge 90$ the best options seems to allow fractional separation and also use the strengthend model. This especially holds for high dimensions where the other models are often not able to solve the problem before the timeout, while the strengthend model together with fractional separation even achieves running times that are not even close to the timeout.

For complete graphs with less than $20$ vertices, the running time of the strengthened model is higher than the running time of the basic model. This may be due to the fact that on such small instances, the strengthening constraints are not yet worthwhile and only overcrowd the model. For $n \ge 20$ vertices, there is surprisingly no significant difference in the running time for any of the four different algorithm variants.


We conclude that it is hard to say which model is the best one. In some cases, it can pay off to use the strengthened model in combination with fractional separation. In other cases, fractional separation seems to increase the running time and in these cases it is hard to tell from the data if the strengthened model significantly improves the running time.

\section{Conclusion}
\label{conclusion}

We have studied different supporting hyperplanes for the $2$-edge-connected subgraph polytope when the underlying graph $G$ is $2$-edge-connected. The asymmetric cut inequalities and the connectivity cut inequalities together with the trivial box inequalities describe all lattice points contained in $\TECSP(G)$. Therefore, we get an ILP for finding a best weighted $2$-edge-connected subgraph of a $2$-edge-connected graph $G$, which directly yields an algorithm to find a best weighted $2$-edge-connected subgraph of an arbitrary graph $G'$. We also studied other supporting hyperplanes, namely, the coparallel class inequalities and the odd star inequalities, where the latter are only defined for complete graphs. We experimentally confirmed that the coparallel class inequalities may improve the running time of the so far considered ILP on certain classes of graphs. On the other hand, the odd star inequalities do not seem to make a difference in running time. Some open questions remain. Of course, the most interesting but probably hard to answer question is

\begin{question}
    What is a complete facet-description of $\TECSP(G)$ for any $2$-edge-connected graph $G$?
\end{question}

Therefore, some easier questions would be the following two.

\begin{question}
    What are other classes of supporting hyperplanes for the $\TECSP(G)$, and when are they facet-defining?
\end{question}

\begin{question}
    For which graphs $G$ is $\TECSP(G)$ completely described by the so far studied inequalities?
\end{question}

\bibliographystyle{alpha}
\bibliography{bibliography.bib}

\begin{thebibliography}{DKN15}

\bibitem[BCJ24]{connectedblocks}
Justus Bruckamp, Markus Chimani, and Martina Juhnke.
\newblock On the connected blocks polytope.
\newblock {\em Discrete \& Computational Geometry}, 2024.

\bibitem[BG86]{BARAHONA198640}
Francisco Barahona and Martin Grötschel.
\newblock On the cycle polytope of a binary matroid.
\newblock {\em Journal of Combinatorial Theory, Series B}, 40(1):40--62, 1986.

\bibitem[BM95]{BARAHONA199519}
Francisco Barahona and Ali~R. Mahjoub.
\newblock On two-connected subgraph polytopes.
\newblock {\em Discrete Mathematics}, 147(1):19--34, 1995.

\bibitem[BM97]{Steiner2edge}
Mourad Ba\"{\i}ou and Ali~R. Mahjoub.
\newblock Steiner 2-edge connected subgraph polytopes on series-parallel
  graphs.
\newblock {\em SIAM Journal on Discrete Mathematics}, 10(3):505--514, 1997.

\bibitem[DKN15]{DidiBihaConnectedSubgraph}
Mohamed {Didi Biha}, Hervé~L.M. Kerivin, and Peh~H. Ng.
\newblock Polyhedral study of the connected subgraph problem.
\newblock {\em Discrete Mathematics}, 338(1):80--92, 2015.

\bibitem[Ehr95]{EhrgottkCardinalityConnSubgraph}
Matthias Ehrgott.
\newblock K-cardinality subgraphs.
\newblock In Ulrich Derigs, Achim Bachem, and Andreas Drexl, editors, {\em
  Operations Research Proceedings 1994}, pages 86--91. Springer Berlin
  Heidelberg, 1995.

\bibitem[Mah94]{MahjoubSpanning}
Ali~R. Mahjoub.
\newblock Two-edge connected spanning subgraphs and polyhedra.
\newblock {\em Mathematical Programming}, 64:199--208, 1994.

\bibitem[MP08]{Mahjoub2008}
Ali~R. Mahjoub and Pierre Pesneau.
\newblock On the {S}teiner 2-edge connected subgraph polytope.
\newblock {\em RAIRO -- Operations Research}, 42(3):259--283, 2008.

\bibitem[Oxl92]{OxleyMatroids}
James Oxley.
\newblock {\em Matroid Theory}.
\newblock Oxford University Press, 1992.

\end{thebibliography}


\section{Appendix}

\label{appendix}

Here, we will explain why the correction in \cite[Corollary 4.23]{BARAHONA198640} is necessary. We use some standard matroid terminology, which we will not define here; see \cite{OxleyMatroids} for a reference.

The original version of \cite[Corollary 4.23]{BARAHONA198640} states the following.

\begin{theorem}[\cite{BARAHONA198640}]
    Let $G$ be a graph, let $E_3 \subseteq E(G)$ be the edges not contained in a cut of size at most $3$ and let $E' \subseteq E(G)$ be a maximal set of edges not containing bridges or $2$-cuts. Then $\ESP(G)$ is given by
    \begin{enumerate}[(a)]
        \item $x_e=0$ for each bridge $e \in E(G)$,
        \item $x_e-x_f=0$ for each minimal cut $\{e,f\}$ of size two,
        \item $0 \le x_e \le 1$ for each $e \in E(G) \setminus E_3$ that is not a bridge,
        \item $x(F)-x(C\setminus F) \le \vert F \vert -1$ for each minimal cut $C \subseteq E'$ without chord, and each $F \subseteq C$, $\vert F \vert$ odd.
    \end{enumerate}
    Moreover, the system above is nonredundant.
\end{theorem}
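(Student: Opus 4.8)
The plan is to establish the description in the three standard stages for a combinatorial polytope: \emph{validity} of every listed (in)equality on $\ESP(G)$, \emph{sufficiency} (the listed system cuts out exactly $\ESP(G)$), and \emph{nonredundancy}. Throughout I would work inside the graphic matroid $M(G)$, whose circuits are the cycles of $G$ and whose cocircuits are the minimal cuts. Since the Eulerian subgraphs of $G$ are precisely the even edge-sets, i.e.\ the disjoint unions of circuits, one has $\ESP(G)=\Cyc(M(G))$, so the statement is exactly the specialization of the binary-matroid cycle-polytope description to the graphic case, which is the setting of \cite{BARAHONA198640}.

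First I would verify validity from the single fact that every cycle meets every cut in an even number of edges. A bridge is a $1$-cut, so no cycle uses it, giving (a); a $2$-cut $\{e,f\}$ is met evenly, so every cycle contains both or neither of $e,f$, giving (b); the bounds (c) are immediate. For (d), fix a minimal cut $C$ and an odd $F\subseteq C$: for any cycle $Z$, maximizing $\chi^Z(F)-\chi^Z(C\setminus F)$ by taking all of $F$ and none of $C\setminus F$ would force $\lvert Z\cap C\rvert=\lvert F\rvert$ to be odd, contradicting evenness, so the value is at most $\lvert F\rvert-1$.

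For sufficiency I would invoke the main structural theorem of \cite{BARAHONA198640}: for a binary matroid without an $F_7^\ast$-minor, $\Cyc(M)$ is described by the box inequalities together with the cocircuit inequalities $x(F)-x(C\setminus F)\le\lvert F\rvert-1$ ranging over all cocircuits $C$ and odd $F\subseteq C$. Graphic matroids are regular, hence have neither an $F_7$- nor an $F_7^\ast$-minor, so this applies to $M(G)$. Translating back to $G$ and passing to $\aff(\ESP(G))$, which is pinned down by the equalities (a) and (b), every facet of $\ESP(G)$ is of the form (c) or (d). It then remains to restrict the cocircuit inequalities to \emph{minimal} cuts and to $F\subseteq E'$: a cocircuit inequality from a non-minimal cut is a nonnegative combination of those coming from its minimal sub-cuts, and any edge of $C$ that is a bridge or lies in a $2$-cut can be removed using (a) and (b), which is exactly the reduction to $E'$.

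The real work, and the step I expect to be the main obstacle, is nonredundancy: showing that the retained (in)equalities are mutually irredundant, i.e.\ that each defines a facet and the listed equalities are affinely independent. For the box inequality on a non-bridge edge $e$ I would argue that its face is affinely isomorphic to the cycle polytope of a smaller graph and then compare dimensions, with the hypothesis $e\notin E_3$ serving as the precise condition under which this face is a facet rather than a lower-dimensional face already forced by the cocircuit inequalities of a small cut containing $e$. For a cocircuit inequality (d) attached to a chordless minimal cut $C$ and odd $F$, I would exhibit $\dim\ESP(G)$ affinely independent cycles attaining equality, the chord-freeness being exactly what prevents the inequality from arising as a sum of two cocircuit inequalities coming from the cuts $F_1,F_2$ witnessing a chord. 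The delicate point throughout is to calibrate these side conditions---the definition of $E_3$ and the chord hypothesis---so that the facet count matches exactly; this calibration is where the statement must be scrutinized most carefully, and it is the part I would expect to occupy the bulk of the argument.
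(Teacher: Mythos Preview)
The statement you are trying to prove is actually \emph{incorrect} as written, and the paper's treatment of it is not a proof but a discussion of why it needs to be amended. The paper cites this as the original formulation of \cite[Corollary~4.23]{BARAHONA198640} and then, in the appendix, identifies three separate errors.

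First, the system in (b) is not nonredundant: listing $x_e-x_f=0$ for \emph{every} minimal $2$-cut $\{e,f\}$ overcounts, since within a coparallel class $\{e_1,\ldots,e_k\}$ the equalities $x_{e_1}=x_{e_i}$, $i=2,\ldots,k$, already imply all the others. Second, the condition in (c) is backwards. With $E_3$ defined as the edges \emph{not} contained in a cut of size at most $3$, the set $E\setminus E_3$ consists of edges that \emph{are} in some small cut; but the box inequality $0\le x_e\le 1$ is facet-defining precisely when $e$ is \emph{not} in a cotriangle (a $3$-cut), so the listed set is wrong. Third, and most substantively, the ``no chord'' hypothesis in (d) is too strong. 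The paper exhibits a small graph in which a cocircuit $C$ has a chord (via cuts $F_1,F_2$ with $F_1\triangle F_2=C$ and $F_1\cap F_2=\{h\}$) yet the associated inequality still defines a facet, because one of $F_1,F_2$ has size $2$. The correct hypothesis is that $\pi(C)$ has no chord in the simplified matroid $\overline{\M}$, equivalently that whenever $C$ has a chord, one of the two witnessing cuts has cardinality $2$.

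Your plan---validity, sufficiency via the $F_7^\ast$-minor-free theorem, then nonredundancy---is the right architecture, and you even flag the calibration of $E_3$ and the chord condition as the delicate step. But the sketch you give for nonredundancy would fail at exactly that step: the argument ``chord-freeness is exactly what prevents the inequality from arising as a sum of two cocircuit inequalities'' breaks down when one summand is a trivial equality from (b), and your facet argument for the box inequality cannot go through for the set $E\setminus E_3$ as defined. You would discover, upon trying to carry out the dimension counts, that the conditions as stated do not match the facets; the right response is to correct the statement to the version given as \Cref{facetsESP} in the paper.
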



The changes we made in \Cref{facetsESP} are to restrict the condition in \textit{(b)} to less cases, define $E_3$ to be the edges that are contained in cuts of size $3$, and we modified the condition in \textit{(d)}.

We made the first correction, because otherwise the system of linear (in-)equalities is not nonredundant in general, since $x_{e_1}-x_{e_i}=0$ for each coparallel class $C=\{e_1,\ldots,e_k\} \in \CP(G)$ and all $i=2,\ldots,k$ already implies $x_e-x_f=0$ for each minimal cut $\{e,f\}$ of size two.

In the following, let $\M=(E,\C)$ denote a binary matroid with ground set $E$ where the elements in $\C \subseteq 2^E$ are the circuits of $\M$. For a graph $G$, let $\M(G)$ denote its corresponding graphic matroid. In \cite{BARAHONA198640}, the \emph{cycle polytope} of a binary matroid is studied, where the cycle polytope of a matroid $\M$, denoted by $\Cyc(\M)$ is given by
\begin{align*}
\Cyc(\M)\coloneqq \conv\{\chi^C \mid C \subseteq E \text{ is a (matroid-)cycle}\}.
\end{align*}
Here, the incidence vector $\chi^C$ is defined analogously as in \eqref{incidence vector} and a \emph{(matroid-)cycle} is the disjoint union of circuits of a matroid. Observe that this is contrary to standard graph theory, where --- if $\M=\M(G)$ is a graphic matroid --- the elements of $\C$ are the edge sets of (simple) cycles of $G$, whereas the edge-disjoint union of cycles has no well-established name. For a graphic matroid $\M(G)$ we have
\begin{align*}
    \Cyc(\M(G))=\ESP(G),
\end{align*}
and since graphic matroids are always binary, all the results of \cite{BARAHONA198640} concerning $\Cyc(\M)$ also apply to $\ESP(G)$.

Two elements in a binary matroid $\M=(E,\C)$ are \emph{coparallel}, if they are parallel in the dual matroid, i.e., they form a cocircuit of size $2$. A \emph{coparallel class} $C \subseteq E$ is a maximal subset of the groundset such that every pair of distinct elements of $C$ are coparallel. Observe, that for a graphic matroid these definitions coincide with our definition of two coparallel elements and a coparallel class, resp., in a graph. For a matroid $\M$, let $\CP(\M)$ denote the set of coparallel classes of $\M$.

For a binary matroid $\M=(E,\C)$, as in \cite{BARAHONA198640}, we will denote by $\overline{\M}$ the matroid that is obtained by deleting every coloop of $\M$ and, for every coparallel class $\{e_1,\ldots,e_k\} \in \CP(\M)$, contracting the elements $e_2,\ldots,e_k$. Hence, $\overline{\M}$ is a matroid without coloops and without coparallel elements. In \cite{BARAHONA198640}, it has been shown that each facet-defining inequality of $\Cyc(\overline{\M})$ is also a facet-defining inequality of $\Cyc(\M)$. Note that this is well-defined, since the ground set of $\overline{\M}$ is a subset of $E$.

Let $E' \subseteq E$ be all elements of $\M$ that are not a coloop and let $\pi\colon E' \to E'$ be the map that sends each element of $E'$ to the unique element that is contained in the same coparallel class as $e$ and has not been contracted in $\overline{\M}$. Note that for every cocircuit $C$ and coparallel elements $e,f \in E$ of $\M$, also $C \triangle \{e,f\}$ is a cocircuit.\footnote{Since $\M$ is binary.} Thus, for every cocircuit $C$ of $\M$, with $\vert C \vert \ge 3$, $\pi(C)$ is also a cocircuit in $\M$ and in $\overline{\M}$ with $\vert C \vert=\vert \pi(C) \vert$.\footnote{Since $C$ is a cocircuit with at least three elements, no two elements contained in $C$ can be coparallel.}

We can now comment on the second correction we made in \Cref{facetsESP}. In the original version of \cite[Corollary 4.23]{BARAHONA198640}, for a graph $G$, the edges that are not contained in a cut of size at most $3$ are denoted by $E_3$. Then it is said that $0 \le x_e \le 1$ is part of the nonredundant inequality description of $\ESP(G)$, i.e., the inequalities define facets of $\ESP(G)$, if and only if $e \in E \setminus E_3$. This does not fit with the following result, which is \cite[Theorem 4.8, Corollary 4.9]{BARAHONA198640}.

\begin{theorem}[\cite{BARAHONA198640}]
    Let $\M'=(E,\C)$ be a binary matroid without coloops and without coparallel elements. If $e \in E$ does not belong to a cotriangle, i.e., a cocircuit with three elements, then the inequalities
    \begin{align*}
        0 \le x_e \le 1
    \end{align*}
    define facets for $\Cyc(\M')$.
\end{theorem}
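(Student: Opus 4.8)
The plan is to turn both statements into dimension counts for the deleted matroid $\M'\setminus e$, exploiting that $\M'$ is binary. I write $\CP(\M)$ for the set of coparallel classes of a binary matroid $\M$, and I use the matroid analogue of \Cref{dimension} (a theorem of \cite{BARAHONA198640}): $\dim\Cyc(\M)=\vert\CP(\M)\vert$. Since $\M'$ has neither coloops nor coparallel elements, every coparallel class of $\M'$ is a singleton, so $\vert\CP(\M')\vert=\vert E\vert$ and $\Cyc(\M')$ is full-dimensional. Both inequalities $x_e\ge 0$ and $x_e\le 1$ are valid, because every cycle of $\M'$ meets $\{e\}$ in $0$ or $1$ element. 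It therefore suffices to show that each of the two faces $\{x\in\Cyc(\M')\mid x_e=0\}$ and $\{x\in\Cyc(\M')\mid x_e=1\}$ has dimension $\vert E\vert-1$; in particular each is then automatically a proper face, hence a facet.

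First I would treat $x_e\ge 0$. The face $\{x_e=0\}$ is the convex hull of the incidence vectors of cycles of $\M'$ avoiding $e$, and these are exactly the cycles of $\M'\setminus e$; so this face is (a copy of) $\Cyc(\M'\setminus e)$ and has dimension $\vert\CP(\M'\setminus e)\vert$. I must show $\vert\CP(\M'\setminus e)\vert=\vert E\vert-1$, i.e. that $\M'\setminus e$ has no coloops and no coparallel elements. For this I would use that the cocircuits of $\M'\setminus e$ are the inclusion-minimal nonempty sets $D\setminus\{e\}$ with $D$ a cocircuit of $\M'$. A coloop $\{f\}$ of $\M'\setminus e$ would require $\{f\}$ or $\{e,f\}$ to be a cocircuit of $\M'$, contradicting that $\M'$ has no coloops resp. no coparallel elements. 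A coparallel pair $\{f,g\}$ of $\M'\setminus e$ would require $\{f,g\}$ or $\{e,f,g\}$ to be a cocircuit of $\M'$; the first is excluded since $\M'$ has no coparallel elements, and the second is a cotriangle containing $e$, which is excluded by hypothesis. Hence all coparallel classes of $\M'\setminus e$ are again singletons, $\vert\CP(\M'\setminus e)\vert=\vert E\vert-1$, and $x_e\ge 0$ defines a facet. (This also explains why the cotriangle hypothesis is exactly the right one: if $\{e,f,g\}$ were a cotriangle of $\M'$, then $f$ and $g$ would become coparallel in $\M'\setminus e$, dropping the dimension of the face below $\vert E\vert-1$.)

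For $x_e\le 1$ I would reduce to the previous case. Fix a cycle $C_0$ of $\M'$ with $e\in C_0$ (one exists since $e$ is not a coloop). Because $\M'$ is binary, $C\mapsto C\triangle C_0$ is a bijection from the cycles of $\M'$ containing $e$ onto the cycles of $\M'$ avoiding $e$, i.e. onto the cycles of $\M'\setminus e$. Suppose now that an affine equation $\sum_{f\in E}a_f x_f=\beta$ holds at every vertex $\chi^C$ of the face $\{x_e=1\}$. Writing $C=C_0\triangle D$ for a cycle $D$ of $\M'\setminus e$ and substituting the identity $\chi^{C_0\triangle D}_f=\chi^{C_0}_f+(1-2\chi^{C_0}_f)\chi^D_f$, the $e$-term disappears (as $\chi^D_e=0$) and the equation becomes $\sum_{f\neq e}\widetilde a_f\chi^D_f=\beta-\sum_{f}a_f\chi^{C_0}_f$ with $\widetilde a_f=\pm a_f$, valid for every cycle $D$ of $\M'\setminus e$. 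Since $\Cyc(\M'\setminus e)$ is full-dimensional in $\R^{E\setminus\{e\}}$ by the previous paragraph, the only affine equation holding on all its vertices is the trivial one, so $\widetilde a_f=0$ — hence $a_f=0$ — for all $f\neq e$; evaluating at $\chi^{C_0}$ then gives $a_e=\beta$. Thus every affine equation valid on the face is a multiple of $x_e=1$, so the face has dimension $\vert E\vert-1$ and $x_e\le 1$ defines a facet.

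The main obstacle is the bookkeeping in the second paragraph: one has to track precisely how coparallel classes (and potential new coloops) of $\M'$ behave under deleting $e$, and this is where binarity is doing the work, via the description of the cocircuits of $\M'\setminus e$ together with the dimension formula $\dim\Cyc=\vert\CP\vert$. Once $\dim\Cyc(\M'\setminus e)=\vert E\vert-1$ is established, the $x_e\le 1$ case is a formal computation. A more hands-on alternative would be to construct $\vert E\vert$ explicit affinely independent cycles inside each face (as in the analogous graph-theoretic arguments of \Cref{boxineq0} and \Cref{boxineq1}), but the deletion-based argument above is shorter and makes the role of the cotriangle hypothesis transparent.
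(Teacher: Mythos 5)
This statement is imported verbatim from \cite{BARAHONA198640} (Theorem 4.8/Corollary 4.9 there); the paper you are reading gives no proof of it, so there is nothing internal to compare against. Your argument is, as far as I can check, correct and self-contained modulo one external ingredient: the dimension formula $\dim\Cyc(\M)=\vert\CP(\M)\vert$ for binary matroids (the matroid analogue of \Cref{dimension}, which is Theorem 4.1 of the same reference and logically prior to the facet results, so there is no circularity). The two pillars both hold up: (1) the identification of the face $\{x_e=0\}$ with $\Cyc(\M'\setminus e)$ together with the description of cocircuits of $\M'\setminus e$ as minimal nonempty sets $D\setminus\{e\}$, $D$ a cocircuit of $\M'$, correctly shows that $\M'\setminus e$ inherits the absence of coloops and coparallel elements exactly when $e$ lies in no cotriangle; (2) the symmetric-difference bijection $C\mapsto C\triangle C_0$ (valid because the cycle space of a binary matroid is closed under $\triangle$) transports any affine equation valid on $\{x_e=1\}$ to one valid on the full-dimensional $\Cyc(\M'\setminus e)$, forcing it to be a multiple of $x_e=1$. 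One stylistic remark: your computation in fact shows that the two faces $\{x_e=0\}$ and $\{x_e=1\}$ always have equal dimension, which cleanly explains why the cotriangle condition governs both inequalities simultaneously (and why its failure kills both, as noted after Corollary 4.9 in \cite{BARAHONA198640}). This deletion-based route is also more transparent than reconstructing the explicit affinely independent families used for the graph cases in \Cref{boxineq0} and \Cref{boxineq1}.
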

Furthermore, after the proof of \cite[Corollary 4.9]{BARAHONA198640}, it is shown that if $e \in E$ is contained in a cotriangle, the inequalities $0 \le x_e \le 1$ do not define facets of $\Cyc(\M)$, where here $\M$ is an arbitrary binary matroid.

Applying all of the above to an arbitrary binary matroid $\M=(E,\C)$, we see that if an element $f$, that is not a coloop, is not contained in a cocircuit of size $3$, also $\pi(f)$ is not contained in a cocircuit of size $3$ in $\overline{\M}$, and hence, the inequalities
\begin{align*}
    0 \le x_f \le 1
\end{align*}
define facets of $\Cyc(\M)$ if and only if $f$ is not contained in a cocircuit of size $3$, which explains the second correction.

Let us now comment on the third correction we made in \Cref{facetsESP}. In \cite[Corollary 4.23]{BARAHONA198640}, it is stated that for a minimal cut $C \subseteq E(G)$ of a graph $G$ and $F \subseteq C$, $\vert F \vert$ odd, the inequality
\begin{align*}
    x(F)-x(C \setminus F) \le \vert F \vert -1
\end{align*}
defines a facet if and only if $C$ has no chord. This is a corollary from \cite[Theorem 4.20]{BARAHONA198640}, which is the following.

\begin{theorem}
    \label{Theorem 4.20}
    If the binary matroid $\M$ has no $F_7^\ast$ minor, and $C=\{e_1,e_2,\ldots,e_k\}$, $k \ge 3$, is a cocircuit without chord, then the inequality
    \begin{align*}
        x_{e_1}-x(\{e_2,\ldots,e_k\}) \le 0
    \end{align*}
    defines a facet of $\Cyc(\M)$.
\end{theorem}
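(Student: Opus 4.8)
The plan is to prove the statement by exhibiting enough cycles on the face it defines. Write $\F := \{x \in \Cyc(\M) \mid x_{e_1} - x(\{e_2,\dots,e_k\}) = 0\}$. First I would reduce to the essential case: by replacing $\M$ with $\overline\M$ and restricting to the connected component of $\M$ that meets $C$, we may assume $\M$ is connected, loopless, without coparallel elements, and without an $F_7^\ast$-minor; then $\Cyc(\M)$ is full dimensional with $\dim\Cyc(\M) = |E|$, the empty cycle $\mathbf 0$ lies in $\F$, so $\aff(\F)$ is a linear subspace and it remains to prove $\dim\F = |E| - 1$. Validity of the inequality is the only painless point: for any cycle $Z$ the intersection $|Z \cap C|$ is even because $C$ is a cocircuit of a binary matroid, so if $e_1 \in Z$ then $Z$ meets $C \setminus \{e_1\}$ and the left-hand side is $\le 0$, while if $e_1 \notin Z$ it is $\le 0$ trivially; moreover $\chi^Z \in \F$ exactly when $Z \cap C \in \{\emptyset\} \cup \{\{e_1, e_j\} : 2 \le j \le k\}$.

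Next I would set up two families of cycles on $\F$. Using that $\M$ is connected and binary, for each $j \in \{2,\dots,k\}$ one shows there is a circuit $D_j$ with $D_j \cap C = \{e_1, e_j\}$; the vectors $\chi^{D_j}$ lie in $\F$ and, restricted to the coordinates in $C$, are linearly independent, giving $k-1$ independent points. For the remaining $|E|-k$ dimensions I would, for each $e \in E \setminus C$, select a circuit $B^{(e)}\ni e$ with $B^{(e)} \cap C = \emptyset$ and form the cycle $D_2 \,\triangle\, B^{(e)} \in \F$; since $\chi^{D_2 \triangle B^{(e)}} - \chi^{D_2}$ is supported on $B^{(e)} \subseteq E \setminus C$, the task becomes choosing the $B^{(e)}$ so that these difference vectors span $\R^{E \setminus C}$. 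Equivalently — and this is the cleaner bookkeeping — one argues indirectly: if $a^\top x \le 0$ is valid and $\F \subseteq \{a^\top x = 0\}$, then $a$ restricted to $E \setminus C$ is orthogonal to every cycle of $\M \backslash C$, hence lies in the cocycle space of $\M \backslash C$, and one must show that feeding in the cycles meeting $C$ forces $a|_{E\setminus C} = 0$ and then $a$ to be a scalar multiple of $\chi^{\{e_1\}} - \chi^{\{e_2,\dots,e_k\}}$.

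The hard part is exactly this last reduction, and it is where both hypotheses are consumed: if $\M \backslash C$ had a cocircuit extending through a single element of $C$, then $C$ would have a chord, which is excluded; ruling out the genuinely exceptional obstruction then uses the absence of an $F_7^\ast$-minor, the triangle-cocircuit inequality of $F_7^\ast$ being the standard example where chord-freeness alone fails to yield a facet. I would organize this via a minor-minimal counterexample: take $\M$ with $|E|$ minimum such that some chord-free cocircuit $C$ with $|C| \ge 3$ fails to give a facet, show that deleting an element outside $C$ or contracting an element of $C$ preserves the hypotheses and the failure whenever the result is nontrivial, so that minimality forces a tightly constrained local structure around $C$, and finally identify that structure as $F_7^\ast$ (or as containing it as a minor), a contradiction. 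The remaining steps — the reduction to $\overline\M$, the existence of the circuits $D_j$, and the final linear-algebra check — I would treat briskly as routine.
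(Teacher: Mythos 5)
First, a point of reference: the paper does not prove this statement at all --- it is quoted verbatim as \cite[Theorem 4.20]{BARAHONA198640} in the appendix, purely to justify the corrections made to \cite[Corollary 4.23]{BARAHONA198640}. So there is no ``paper's own proof'' to compare against; your proposal has to stand on its own as a proof of a known but nontrivial result of Barahona and Gr\"otschel.

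Judged on that basis, the easy parts of your outline are fine: validity via even intersection of cycles with cocircuits, the characterization of which cycles lie on the face, the reduction to $\overline{\M}$ (chord-freeness does pass to $\pi(C)$, and facets of $\Cyc(\overline{\M})$ lift, though both facts are themselves imported from \cite{BARAHONA198640}), and the existence of the $k-1$ circuits $D_j$ with $D_j\cap C=\{e_1,e_j\}$ (a standard consequence of $E\setminus C$ being a hyperplane). But the entire content of the theorem sits in the step you defer: producing the remaining $\vert E\vert-k$ independent directions. Your first route fails as stated, because $\M\backslash C$ may acquire coloops and coparallel elements even when $\M$ has none, so the cycles avoiding $C$ do \emph{not} span $\R^{E\setminus C}$ in general; one must use cycles that meet $C$ to break the resulting ties, and showing this is possible is precisely where chord-freeness and the $F_7^\ast$ exclusion do their work. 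Your fallback --- a minor-minimal counterexample terminating in $F_7^\ast$ --- is a research plan rather than a proof: it is not established that deleting an element outside $C$ or contracting an element of $C$ preserves ``$C$ is a cocircuit,'' ``$C$ is chord-free,'' and the failure of facetness (deletion can create new cocircuits inside $C\cup\{e\}$ and hence new chords), and it is not explained why the terminal obstruction must be $F_7^\ast$ specifically rather than, say, one of the other excluded minors ($R_{10}$, $M^\ast(K_5)$) appearing in Seymour's sums-of-circuits theorem, which is the result this theorem is really leaning on. As it stands, the proposal records the boundary of the problem accurately but does not cross it; the honest course here is to cite \cite{BARAHONA198640} for the theorem, as the paper does.
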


Let $C=\{e_1,e_2,\ldots,e_k\} \in \C$ be a cocircuit of a binary matroid $\M=(E,\C)$ with $\vert C \vert \ge 3$. Then $\pi(C)$ is a cocircuit of $\overline{\M}$ with $\vert \pi(C) \vert =\vert C \vert \ge 3$, and if $\pi(C)$ has no chord in $\overline{\M}$, this implies that
\begin{align*}
    x_{e_1}-x(\{e_2,\ldots,e_k\}) \le 0
\end{align*}
defines a facet of $\Cyc(\M)$ due to \Cref{Theorem 4.20}. This actually strengthens \Cref{Theorem 4.20} since if a cocircuit $C$ has no chord in $\M$, then it neither has a chord in $\overline{\M}$.

In \cite{BARAHONA198640}, it is also claimed that for a cocircuit $C=\{e_1,\ldots,e_k\}$, $k \ge 3$, of a binary matroid $\M=(E,\C)$ that has a chord, the inequality
\begin{align*}
    x_{e_1}-x(\{e_2,\ldots,e_k\})\le 0
\end{align*}
does not define a facet of $\Cyc(\M)$. However, this is not true for all binary matroids, for example, consider the graphic matroid $\M(G)$ of the graph $G$ shown in \Cref{counterexample}. The cocircuit $\{e_1,e_2,e_3\}$ has the chord $e_4$, since $\{e_1,e_2,e_3\}=\{e_1,e_2,e_4\} \triangle \{e_3,e_4\}$ and $\{e_4\}=\{e_1,e_2,e_4\} \cap \{e_3,e_4\}$, but the inequality
\begin{align*}
    x_{e_1}-x_{e_2}-x_{e_3} \le 0
\end{align*}
defines a facet of $\Cyc(\M(G))$ as one can easily verify. The argument used in \cite{BARAHONA198640} is the following. Suppose the cocircuit $C$ has a chord, i.e., there are cocircuits $C_1,C_2$ in $\M$, such that $C_1 \triangle C_2=C$ and $C_1 \cap C_2 =\{h \}$ for an $h \in E$. Without loss of generality we assume that $e_1 \in C_1$, then the inequality
\begin{align*}
    x_{e_1}-x(\{e_2,\ldots,e_k\}) \le 0
\end{align*}
is the sum of the two inequalities
\begin{align*}
    x_{e_1}-x(C_1 \setminus \{e_1\}) \le 0 \text{, and } x_{h}-x(C_2 \setminus \{h\}) \le 0,
\end{align*}
and therefore does not define a facet. However, this is not true, if one of the cocircuits has only two elements and for the other one the corresponding inequality defines a facet. This can happen as we have just seen in the example above. The correct statement would be that if the cocircuit $\pi(C)$ has no chord in $\overline{\M}$, then the considered inequality does not define a facet. In this case, the argument above works since there are no cocircuits of size two in $\overline{\M}$. Hence, a stronger version of \cite[Theorem 4.20]{BARAHONA198640} is the following.

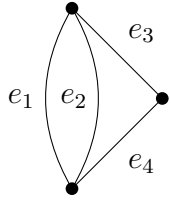
\begin{figure}
    \centering
        \begin{tikzpicture}[scale=1.2]
        \coordinate (A) at (0,0);
        \coordinate (B) at (0,2);
        \coordinate (C) at (1,1);

        \fill (A) circle (2pt);
        \fill (B) circle (2pt);
        \fill (C) circle (2pt);

        \draw(A) to [bend left] node[midway, left] {$e_1$} (B);
        \draw(A) to [bend right] node[midway, left] {$e_2$} (B);
        \draw (B) to node[midway, above right] {$e_3$} (C) ;
        \draw (A) to node[midway, below right] {$e_4$} (C);
        \end{tikzpicture}
    \caption{The graph $G$.}
    \label{counterexample}
\end{figure}

\begin{theorem}
    If the binary matroid $\M$ has no $F_7^\ast$ minor, and $C=\{e_1,e_2,\ldots,e_k\}$, $k \ge 3$, is a cocircuit, then the inquality
    \begin{align*}
        x_{e_1}-x(\{e_2,\ldots,e_k\}) \le 0
    \end{align*}
    defines a facet if and only if $\pi(C)$ has no chord in $\overline{\M}$.
\end{theorem}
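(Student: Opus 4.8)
The plan is to transport the question to the matroid $\overline{\M}$, which has no coloops and no coparallel elements, apply \Cref{Theorem 4.20} for the ``if'' direction, and adapt Barahona's chord argument --- which becomes valid precisely because $\overline{\M}$ has no $2$-element cocircuits --- for the ``only if'' direction. First I would record the reduction. For every cycle $D$ of $\M$ and every coparallel pair $e,f$ the set $\{e,f\}$ is a cocircuit, so $|D\cap\{e,f\}|$ is even and $\chi^D_e=\chi^D_f$; similarly $\chi^D_e=0$ for a coloop $e$. Hence on $\Cyc(\M)$ we have $x_e=x_f$ for coparallel $e,f$ and $x_e=0$ for coloops, so the coordinate restriction $\R^{E}\to\R^{\overline{E}}$ is an affine isomorphism $\Cyc(\M)\to\Cyc(\overline{\M})$; consequently an inequality supported on $\overline{E}$ is facet-defining for $\Cyc(\M)$ if and only if its restriction is facet-defining for $\Cyc(\overline{\M})$ (one direction being the statement quoted from \cite{BARAHONA198640}). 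Moreover $x_{e_1}-x(\{e_2,\dots,e_k\})\le 0$ and $x_{\pi(e_1)}-x(\pi(C)\setminus\{\pi(e_1)\})\le 0$ define the \emph{same} face of $\Cyc(\M)$, because $x_{e_i}=x_{\pi(e_i)}$ on $\Cyc(\M)$ and $\pi$ is injective on $C$ (no two elements of a cocircuit of size $\ge 3$ are coparallel). Since $\pi(C)$ is a cocircuit of $\overline{\M}$ of size $k\ge 3$ and $\overline{\M}$, being a minor of $\M$, has no $F_7^\ast$ minor, it suffices to prove the claim for a coloop-free, coparallel-free binary matroid $\M'$ with no $F_7^\ast$ minor, a cocircuit $C'=\{f_1,\dots,f_k\}$ with $k\ge3$, and the inequality $x_{f_1}-x(\{f_2,\dots,f_k\})\le 0$.

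The ``if'' direction is then immediate: if $C'$ has no chord in $\M'$, \Cref{Theorem 4.20} gives that the inequality is facet-defining for $\Cyc(\M')$, hence for $\Cyc(\M)$ by the reduction.

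For the ``only if'' direction I would argue the contrapositive. Suppose $C'$ has a chord, witnessed by cocircuits $C_1,C_2$ of $\M'$ with $C_1\triangle C_2=C'$ and $C_1\cap C_2=\{h\}$. Since $\M'$ has no coparallel elements it has no $2$-element cocircuit, so $|C_1|,|C_2|\ge 3$. From $f_1\in C'=C_1\triangle C_2$ we get $f_1\ne h$, and $f_1$ lies in exactly one of $C_1,C_2$; relabel so that $f_1\in C_1$. Using $C'\setminus\{f_1\}=(C_1\setminus\{f_1,h\})\,\dot{\cup}\,(C_2\setminus\{h\})$ one checks the identity
\begin{align*}
    x_{f_1}-x\!\big(C'\setminus\{f_1\}\big)=\Big(x_{f_1}-x\!\big(C_1\setminus\{f_1\}\big)\Big)+\Big(x_h-x\!\big(C_2\setminus\{h\}\big)\Big),
\end{align*}
and both summands are valid for $\Cyc(\M')$ by circuit--cocircuit orthogonality. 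Because $|C_1|,|C_2|\ge3$, each summand defines a \emph{proper} face $\F_1,\F_2$ of $\Cyc(\M')$, and these two faces are distinct; then the face cut out by $x_{f_1}-x(C'\setminus\{f_1\})=0$ equals $\F_1\cap\F_2$, which cannot be a facet, since a facet of $\Cyc(\M')$ contained in the proper face $\F_1$ must equal $\F_1$ (and likewise $\F_2$), forcing $\F_1=\F_2$. Hence the inequality is not facet-defining for $\Cyc(\M')$, and not for $\Cyc(\M)$ by the reduction.

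The main obstacle is the assertion in the last step that each summand defines a \emph{proper} face of $\Cyc(\M')$ and that $\F_1\ne\F_2$; this is exactly the point at which Barahona's original argument breaks down, since a $2$-element cocircuit $C_i$ would yield an identity $x_{f_1}=x_g$ rather than a proper-face inequality, which is why the passage to $\overline{\M}$, guaranteeing $|C_i|\ge3$, is indispensable. Making it rigorous amounts to producing cycles of $\M'$ with a prescribed intersection pattern with $C_1$ and $C_2$ --- for instance a circuit meeting a given $C_i$ in exactly two prescribed elements, and one that avoids $f_1$ while still meeting $C_1$ --- using that $\M'$ is coloop-free and that a cocircuit of size at least three interacts nontrivially with the circuit structure.
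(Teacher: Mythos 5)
Your proposal is correct and follows essentially the same route as the paper: reduce to $\overline{\M}$ via the affine isomorphism induced by the coparallel/coloop relations, invoke \Cref{Theorem 4.20} for the ``if'' direction, and run Barahona's sum-of-two-cocircuit-inequalities argument for the ``only if'' direction, which is repaired precisely because $\overline{\M}$ has no $2$-element cocircuits. The one step you flag as still needing work --- that each summand defines a proper face and that the two faces are distinct --- is also left implicit in the paper, which delegates it to the arguments of \cite{BARAHONA198640}.
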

This explains our third correction of \cite[Corollary 4.23]{BARAHONA198640} we made in \Cref{facetsESP}.

\end{document}